\definecolor{antiquefuchsia}{rgb}{0.57, 0.36, 0.51}
\definecolor{azure}{rgb}{0.0, 0.5, 1.0}
\def\th@plain{%
	\thm@notefont{}
	\itshape 
}
\def\th@definition{%
	\thm@notefont{}
	\normalfont 
}
\numberwithin{equation}{section}
\newtheorem{theorem}{Theorem}[section]
\newtheorem{lemma}[theorem]{Lemma}
\newtheorem{proposition}[theorem]{Proposition}
\newtheorem{corollary}[theorem]{Corollary}
\newtheorem{question}[theorem]{Question}
\theoremstyle{definition}
\theoremstyle{remark}
\newtheorem{remark}[theorem]{Remark}
\newcommand{\de}{\partial}
\newcommand{\R}{\mathbb{R}}
\newcommand{\mres}{\mathbin{\vrule height 1.6ex depth 0pt width
0.13ex\vrule height 0.13ex depth 0pt width 1.3ex}}
\newcommand{\ang}[1]{\langle #1 \rangle}
\renewcommand{\bar}{\overline}
\DeclareMathOperator{\spt}{spt}
\DeclareMathOperator{\dist}{dist}
\DeclareMathOperator{\vol}{vol}
\DeclareMathOperator{\tr}{tr}
\DeclareMathOperator{\loc}{loc}
\DeclareMathOperator{\U}{U}
\DeclareMathOperator{\SU}{SU}
\DeclareMathOperator{\Sp}{Sp}
\DeclareMathOperator{\Ad}{Ad}
\DeclareMathOperator{\Lip}{Lip}
\newcommand{\dvol}{d\mathrm{vol}}
\newcommand{\bfi}{\mathbf{i}}
\newcommand{\bfj}{\mathbf{j}}
\newcommand{\bfk}{\mathbf{k}}
\newcommand{\mz}{\frac{1}{2}}
   \def\MR#1{}
\begin{document}
\title[Nonabelian Yang--Mills--Higgs and Plateau's problem in codimension three]{Nonabelian Yang--Mills--Higgs and \\ Plateau's problem in codimension three}

\author[Davide Parise]{Davide Parise}
\address{University of California San Diego, Department of Mathematics, 9500 Gilman Drive \#0112, La Jolla, CA 92093-0112, United States of America}
\email{dparise@ucsd.edu}

\author[Alessandro Pigati]{Alessandro Pigati}
\address{Bocconi University, Department of Decision Sciences, Via Guglielmo R\"ontgen 1, 20136
Milano, Italy}
\email{alessandro.pigati@unibocconi.it}

\author[Daniel Stern]{Daniel Stern}
\address{Cornell University, Department of Mathematics, 310 Malott Hall, Ithaca, NY 14853}
\email{daniel.stern@cornell.edu}

\date{\today}

\begin{abstract}
       We investigate the asymptotic behavior of the $\SU(2)$-Yang--Mills--Higgs energy $E(\Phi,A)=\int_M|d_A\Phi|^2+|F_A|^2$ in the large mass limit, proving convergence to the codimension-three area functional in the sense of De Giorgi's $\Gamma$-convergence. More precisely, for 
       a compact manifold with boundary $M$ and
       any family of pairs $\Phi_m\in\Omega^0(M;\mathfrak{su}(2))$ and $A_m\in \Omega^1(M;\mathfrak{su}(2))$ indexed by a \emph{mass}
       parameter $m\to\infty$, satisfying
       $$E(\Phi_m,A_m)\leq Cm\quad\text{and}\quad\lim_{m\to\infty}\frac{1}{m}\int_M(m-|\Phi_m|)^2=0,$$
       we prove that the $(n-3)$-currents dual to $\frac{1}{2\pi m}\tr(d_{A_m}\Phi_m\wedge F_{A_m})$ converge subsequentially to a relative integral $(n-3)$-cycle $T$ of mass
       \begin{equation}\label{massineq}
       \mathbb{M}(T)\leq \liminf_{m\to\infty}\frac{1}{4\pi m}E(\Phi_m,A_m),
       \end{equation}
       and show conversely that any integral $(n-3)$-current $T$ with $[T]=0\in H_{n-3}(M,\partial M;\mathbb{Z})$ admits such an approximation, with equality in \eqref{massineq}. In the special case of pairs $(\Phi_m,A_m)$ satisfying the generalized monopole equation $*d_{A_m}\Phi_m=F_{A_m}\wedge \Theta$ for a calibration form $\Theta\in \Omega^{n-3}(M)$, we deduce that the limit $\nu=\lim_{m\to\infty}\frac{1}{2\pi m}|d_{A_m}\Phi_m|^2$ of the Dirichlet energy measures satisfies $\nu\leq |T|$, with equality if and only if $T$ is calibrated by $\Theta$, giving evidence for predictions of Donaldson--Segal in the settings of $G_2$-manifolds and Calabi--Yau $3$-folds.
       
\end{abstract}

\maketitle 
\tableofcontents 

\section{Introduction}

Following breakthroughs by Taubes \cite{TaubesSWGR}, Tian \cite{TianGTCG} and others in the late 1990s, interactions between gauge theory and submanifold geometry have played a central role in the development of gauge theory over the last quarter century. In some settings, as in \cite{TianGTCG}, minimal submanifolds arise as possible degenerations at the boundary of some gauge-theoretic moduli space, but in others, as in \cite{TaubesSWGR}, one finds a robust dictionary between solutions of certain gauge-theoretic PDEs and distinguished submanifolds by passing to adiabatic limits.

On the PDE side, dictionaries of the latter kind have attracted considerable attention since the 1970s, when De Giorgi's school began to explore a correspondence between semilinear scalar equations and minimal hypersurfaces \cite{ModicaMortola,Modicabis}, which in recent years has been used to obtain some striking results in the min-max theory for minimal surfaces and geodesics \cite{Guaraco, GasparGuaraco, ChoMan1, ChoMan2}. Since the 1990s, similar relationships have been found between minimal submanifolds of codimension two and elliptic systems related to the Ginzburg--Landau model of superconductivity \cite{RivU1, LinRiviere, JSgamma, BethuelBrezisOrlandi}, with a particularly satisfying dictionary in the case of the self-dual $\U(1)$-Higgs equations \cite{PS, PPSgamma}. We note in particular that the codimension-two concentration phenomena in \cite{PS,PPSgamma} and \cite{TaubesSWGR} are closely related, with solutions of either the second order self-dual $\U(1)$-Higgs equations or the perturbed Seiberg--Witten equations resembling solutions of the classical vortex equations on $\mathbb{R}^2$ in normal planes to the concentration set at generic points. 

In the search for analogous phenomena in codimension three, it is natural to replace the two-dimensional vortex equations with their nonabelian cousin on $3$-manifolds: the Bogomolnyi monopole equation, with structure group $\SU(2)$. For the trivial $\SU(2)$-bundle over $\mathbb{R}^3$, a \emph{monopole} is a pair $(\Phi,A)$ consisting of a section $\Phi$ of the adjoint bundle $\mathfrak{su}(2)\times \mathbb{R}^3$ and a connection $A\in \Omega^1(\mathbb{R}^3,\mathfrak{su}(2))$ solving the Bogomolnyi equation
\begin{equation}\label{mono}
*d_A\Phi=\pm F_A.
\end{equation}
Like the vortex equations, the Bogomolnyi equation arises as a special case of the instanton equations on four-manifolds, describing translation-invariant instantons of the form $\Phi\, dt+A$ on $\mathbb{R}\times \mathbb{R}^3$. Variationally, monopoles are minimizers of the Yang-Mills-Higgs energy
$$E(\Phi,A):=\int_{\mathbb{R}^3}|d_A\Phi|^2+|F_A|^2,$$
and in fact are the only stable, finite-energy solutions of the Euler--Lagrange equations
\begin{equation}\label{ymh.eq}
    d_A^*d_A\Phi=0, \quad d_A^*F_A+[\Phi,d_A\Phi]=0
\end{equation}
for the Yang--Mills--Higgs energy on $\mathbb{R}^3$ \cite{JaffeTaubes, TaubesStable}. As discussed in \cite{JaffeTaubes,FadelAC}, the energy of any finite-energy monopole $(\Phi,A)$ on $\mathbb{R}^3$ is determined by the product 
$$E(\Phi,A)=4\pi m |k|$$
of the (well-defined) limit
$$m=\lim_{|x|\to\infty}|\Phi(x)|$$
known as the \emph{mass}, and the \emph{charge}
$$k=\lim_{r\to\infty}\deg(\Phi/|\Phi|, S^2_r(0))\in \mathbb{Z}.$$
In particular, after normalizing by the mass, the energy $\frac{1}{4\pi m}E(\Phi,A)$ is quantized, and the energy measures 
$$\frac{1}{4\pi m}(|d_A\Phi|^2+|F_A|^2) \,\dvol_g=\pm\frac{1}{2\pi m}\tr(d_A\Phi\wedge F_A)\,\dvol_g$$
tend to concentrate at points in the \emph{large mass limit} $m\to\infty$; see \cite{FadelOliveira} for a detailed analysis of the large-mass asymptotics of monopoles on general asymptotically conical $3$-manifolds. 

In higher dimensions, analogs of the monopole equation \eqref{mono} arise naturally in certain manifolds with special holonomy, most notably in (6-dimensional) Calabi--Yau $3$-folds and (7-dimensional) $G_2$-manifolds, where solutions are again critical points for the Yang--Mills--Higgs energy
$$E(\Phi,A)=\int_M|d_A\Phi|^2+|F_A|^2$$
with respect to compact variations (see, e.g., \cite[Section 1.3]{OliveiraThesis}). In \cite{DonaldsonSegal}, Donaldson and Segal suggest that suitable counts of these generalized monopoles on noncompact $G_2$-manifolds or Calabi--Yau $3$-folds could provide meaningful enumerative invariants, similar to classic gauge-theoretic invariants of lower-dimensional manifolds. Moreover, they conjecture that in the \emph{large mass limit} these invariants can be identified with certain counts of special Lagrangians in Calabi--Yau $3$-folds or coassociatives in $G_2$-manifolds (weighted by a count of \emph{Fueter sections} describing first-order asymptotics for monopoles near the concentration set), providing a kind of nonabelian, codimension-three counterpart to the correspondence between Seiberg--Witten and Gromov invariants in symplectic four-manifolds. Starting with Oliveira's thesis \cite{OliveiraThesis}, the last decade has seen some interesting progress on the Donaldson--Segal program: see, for instance, \cite{OliveiraJGP, Stein, FadelNagyOliveira, EsfahaniThesis} and references therein. Nonetheless, at the moment the central conjectures remain widely open.

Motivated in part by the Donaldson--Segal picture and drawing inspiration from the dictionary between the self-dual $\U(1)$-Higgs equations and minimal submanifolds of codimension two, we turn now to the following question.

\begin{question}
    Is there a robust correspondence between minimal submanifolds of codimension three and solutions of the $\SU(2)$-Yang--Mills--Higgs equations in the large mass limit? More broadly, is there a correspondence between the Yang--Mills--Higgs energies and the codimension-three area functional, at the level of $\Gamma$-convergence or convergence of gradient flows, in suitable adiabatic limits?
\end{question}

What follows is the first part of a positive answer, proving that \emph{the $\SU(2)$-Yang--Mills--Higgs energies converge in a natural sense to the mass functional for $(n-3)$-cycles in adiabatic limits}. Morally speaking, the precise result stated below may be compared with the results of Modica--Mortola for the Allen--Cahn functionals \cite{ModicaMortola}, Jerrard--Soner and Alberti--Baldo--Orlandi for the ungauged Ginzburg--Landau equations \cite{JSgamma, ABO}, and the authors for the self-dual $\U(1)$-Higgs energies \cite{PPSgamma}, but at a technical level both the statement and its proof are necessarily rather different from all of these. 

To illustrate the geometric content of the $\Gamma$-convergence result, we show that area-minimizing $(n-3)$-cycles can be approximated locally by minimizers (or minimizing sequences) for the $\SU(2)$-Yang--Mills--Higgs energy; in other words, Plateau's problem in codimension three can be solved (however impractically) by gauge-theoretic means. As a further consequence of our results, we show that, for generalized monopoles of the kind relevant to the Donaldson--Segal program, the Dirichlet component $|d_A\Phi|^2$ of the energy measure always concentrates in the large mass limit along the support of an integral $(n-3)$-cycle; moreover, this cycle is calibrated, e.g., coassociative in the $G_2$ setting, whenever a certain integral balancing condition between the Dirichlet and Yang--Mills components of the energy measure holds near the concentration set.

\subsection{\for{toc}{$\Gamma$-convergence of the rescaled Yang--Mills--Higgs energies}\except{toc}{$\bm{\Gamma}$-convergence of the rescaled Yang--Mills--Higgs energies}}\hfill\\
Let $(M^n,g)$ be a compact Riemannian manifold, possibly with boundary, and let $P\to M$ be a principal $\SU(2)$-bundle over $M$, with associated adjoint bundle $\mathcal{E}\to M$. Since we are primarily concerned with local phenomena in this paper, let us assume for simplicity that $P=\SU(2)\times M$ is the trivial bundle, so that $\mathcal{E}=\mathfrak{su}(2)\times M$. Fixing a trivialization, we then identify connections on $P$ with $\mathfrak{su}(2)$-valued one-forms $A\in\Omega^1(M;\mathfrak{su}(2)),$ acting on sections $\Phi:M\to \mathfrak{su}(2)$ of the adjoint bundle by
$$d_A\Phi=d\Phi+[A,\Phi],$$
with curvature given by
$$F_A=dA+\frac12[A\wedge A].$$
Throughout the paper, we identify $\SU(2)$ with the $3$-sphere of unit quaternions and $\mathfrak{su}(2)$ with the imaginary quaternions, equipped with the standard inner product with respect to which $|\bf i|=|\bf j|=|\bf k|=1$. See Section \ref{prelim} below for further discussion of our notation and conventions, and comparison with other common conventions in the literature.

For each $\epsilon>0$, we define the $\epsilon$-Yang--Mills--Higgs energy of a pair $(\Phi,A)$ by
$$E_{\epsilon}(\Phi,A):=\int_M \frac{1}{\epsilon}|d_A\Phi|^2+\epsilon|F_A|^2.$$
Note that the energies $E_{\epsilon}$ are related to the standard ($\epsilon=1$) Yang-Mills-Higgs energies via two different scalings: first, with respect to the rescaled metric $g_{\epsilon}=\epsilon^{-2}g$, we see that
$$E_1^{g_{\epsilon}}(\Phi,A)=\epsilon^{3-n}E_{\epsilon}^g(\Phi,A);$$
on the other hand, fixing the metric $g$ but rescaling the section by taking $\Psi=\epsilon^{-1}\Phi$, we see that
$$E_{\epsilon}(\Phi,A)=\epsilon E_1(\Psi,A).$$
The latter observation is particularly relevant when recasting the $\epsilon\to 0$ asymptotics of the energies $E_{\epsilon}$ in the language of \emph{large mass limits} often found in the gauge theory literature.

To any section $\Phi:M\to \mathfrak{su}(2)$ and connection $A\in \Omega^1(M;\mathfrak{su}(2))$, we associate a real-valued $3$-form
$$Z(\Phi,A):=2\Re(d_A\bar{\Phi}\wedge F_A)\in\Omega^3(M),$$
where we denote by $\Re(\cdot)$ the real part of a quaternion-valued form. For pairs $(\Phi,A)$ on $\mathbb{R}^3$ with suitable decay at infinity and $|\Phi(x)|\to 1$ as $|x|\to\infty$, note that this is precisely the three-form whose integral recovers $4\pi$ times the integer charge of $(\Phi,A)$ \cite{JaffeTaubes, FadelAC}. In any manifold, a simple computation (see Proposition \ref{trivial.ub} below) gives the pointwise bound
$$|Z(\Phi,A)|\leq \frac{1}{\epsilon}|d_A\Phi|^2+\epsilon |F_A|^2,$$
and in particular
$$\int_M|Z(\Phi,A)|\leq E_{\epsilon}(\Phi,A).$$
On a $3$-manifold, equality holds when $(\Phi,A)$ satisfies the rescaled Bogolmonyi equation $*d_A\Phi=\pm\epsilon F_A$, in which case $Z(\Phi,A)=\pm (\epsilon^{-1}|d_A\Phi|^2+\epsilon|F_A|^2)$ simply recovers the Yang--Mills--Higgs energy density.

Our first main result, giving the $\liminf$ part of the $\Gamma$-convergence statement, shows that for a family of pairs $(\Phi_{\epsilon},A_{\epsilon})$ with bounded energy $E_{\epsilon}(\Phi_{\epsilon},A_{\epsilon})\leq C$ and $\|1-|\Phi_{\epsilon}|\|_{L^2}\to 0$ sufficiently fast as $\epsilon\to 0$, the $3$-forms $Z(\Phi_{\epsilon},A_{\epsilon})$ converge to an integral $(n-3)$-cycle.  To make this convergence precise, note that $Z(\Phi,A)$ naturally defines an $(n-3)$-current via 
$$\Omega^{n-3}(M)\ni \alpha\mapsto \int_MZ(\Phi,A)\wedge \alpha.$$

\begin{theorem} \label{thm: main theorem}
    Given a sequence $\epsilon_j\to 0$, a sequence of smooth $\SU(2)$-connections $A_j$ and sections $\Phi_j\in \Gamma(\mathcal{E})$ such that
    $$\liminf_{j\to\infty}E_{\epsilon_j}(\Phi_j,A_j)<\infty$$
    and
    \begin{equation}\label{1.fast}
    \lim_{j\to\infty}\int_M\frac{(1-|\Phi_j|)^2}{\epsilon_j}=0,
    \end{equation}
    there exist an $(n-3)$-current $T$, restricting to an integral cycle in the interior of $M$, and a measure $\mu\in C^0(M)^*$ such that, along a subsequence,
    $$Z(\Phi_j,A_j)\rightharpoonup^* 4\pi T$$
    as $(n-3)$-currents, 
    $$(\epsilon_j^{-1}|d_{A_j}\Phi_j|^2+\epsilon_j|F_{A_j}|^2)\,\dvol_g\rightharpoonup^* \mu$$
    in $C^0(M)^*$, and the weight measure $|T|=\theta\mathcal{H}^{n-3}\mres \spt(T)$ satisfies
    \begin{equation}\label{ener.lim}
    4\pi |T|\leq \mu;
    \end{equation}
    in particular,    
    $$4\pi \mathbb{M}(T)\leq \liminf_{j\to\infty} E_{\epsilon_j}(\Phi_j,A_j).$$
\end{theorem}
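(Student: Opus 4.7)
\emph{Compactness and the pointwise mass bound.} The uniform energy bound $E_{\epsilon_j}(\Phi_j, A_j) \leq C$ makes the Radon measures $\mu_j := (\epsilon_j^{-1}|d_{A_j}\Phi_j|^2 + \epsilon_j|F_{A_j}|^2)\,\dvol_g$ uniformly bounded in total mass, so by Banach--Alaoglu a subsequence converges weakly-$*$ to a nonnegative Radon measure $\mu$ with $\mu(M) \leq \liminf_j E_{\epsilon_j}(\Phi_j, A_j)$. The pointwise inequality $|Z(\Phi_j, A_j)| \leq \epsilon_j^{-1}|d_{A_j}\Phi_j|^2 + \epsilon_j|F_{A_j}|^2$ of Proposition \ref{trivial.ub} implies that the associated $(n-3)$-currents have uniformly bounded mass, so along a further subsequence they converge weakly-$*$ to a current that we label $4\pi T$; passing the pointwise bound to the limit gives $4\pi|T| \leq \mu$ as measures, whence $4\pi\mathbb{M}(T) \leq \liminf_j E_{\epsilon_j}(\Phi_j, A_j)$. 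What remains is to show that $T$ is a closed integral cycle on the interior of $M$.

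\emph{Closedness.} A direct expansion of $d Z = 2\Re(d d_A\bar\Phi \wedge F_A - d_A\bar\Phi \wedge dF_A)$ using the Bianchi identity $d_A F_A = 0$ and the curvature identity $d_A d_A \Phi = [F_A, \Phi]$ shows that $dZ(\Phi, A)$ is an algebraic combination of terms which vanish pointwise when $\Phi$ takes values in the unit sphere of $\mathfrak{su}(2)$, leaving a remainder that is pointwise controlled by $(1-|\Phi|^2)|F_A|^2$ and $(1-|\Phi|)|d_A\Phi||F_A|$. Cauchy--Schwarz together with \eqref{1.fast} and the uniform energy bound forces the $L^1$ norms of both quantities to tend to zero. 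Pairing $dZ_j$ against a compactly supported test form and integrating by parts then gives $\partial T = 0$ in the interior of $M$.

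\emph{Integrality --- the main obstacle.} The essential difficulty is promoting the closed flat cycle $T$ to an integral one. The plan is to slice $M$ locally by a generic family of $3$-dimensional submanifolds transverse to the expected concentration set; on almost every such slice $\Sigma$, the restriction $Z_j\big|_\Sigma$ is the integrand in the classical Bogomolnyi identity for $\SU(2)$ monopoles. A Stokes-type calculation shows that $\int_B Z_j\big|_\Sigma$ over a small $3$-ball $B \subset \Sigma$ approximates $4\pi \deg(\Phi_j/|\Phi_j| \colon \partial B \to S^2)$ modulo errors that vanish as $j \to \infty$ by the $L^2$ control on $d_{A_j}\Phi_j$ and on $1-|\Phi_j|$. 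A ball-covering argument, coupled with a quantitative $4\pi$-lower bound on the rescaled monopole energy per unit integer charge, should then localise $T\big|_\Sigma$ as an integer combination of points in $\Sigma$; a slicing characterisation of integrality (\`a la Ambrosio--Kirchheim or Federer--White) would lift this to integrality of $T$ on the interior. The chief hurdle is precisely this quantitative monopole lower bound --- the nonabelian analogue of the vortex ball construction from codimension-two theory --- which cannot be extracted by tracking $S^1$-valued winding numbers as in the $\U(1)$-Higgs setting, but instead demands controlling $S^2$-degree on small $2$-spheres while simultaneously fixing the connection $A$ up to gauge, with an additive error that vanishes in the large-mass limit.
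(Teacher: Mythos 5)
Your compactness step and the conclusion $4\pi|T|\le\mu$ match the paper's Lemma \ref{curr.lim}. A small correction there: no expansion of $dZ$ with error terms is needed, because $Z(\Phi,A)=2\,d\Re(\bar\Phi F_A)$ is \emph{exact} (this is immediate from \eqref{eq: d Re} and Bianchi), so $\partial[Z(\Phi_j,A_j)]=0$ in the interior for every fixed $j$, not just in the limit. This matters beyond aesthetics: the remainder you describe, e.g.\ $(1-|\Phi|)|d_A\Phi||F_A|$, is not obviously $o(1)$ in $L^1$ from \eqref{1.fast} and the energy bound alone (Cauchy--Schwarz leaves you needing control of $\int(1-|\Phi|)^2|F_A|^2$ or an $L^\infty$ bound on $1-|\Phi|$, neither of which is available), so had $Z$ not been exactly closed your argument for $\partial T=0$ would have a hole.

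The genuine gap is the integrality step, which is the heart of the theorem and which you yourself leave open: you reduce (correctly, via the Federer/White/Ambrosio--Kirchheim slicing criterion, as the paper also does) to showing that on a.e.\ $3$-dimensional slice the limit is an integral $0$-current, but then your plan hinges on a ``quantitative $4\pi$-lower bound on the rescaled monopole energy per unit integer charge,'' a nonabelian vortex-ball construction that you acknowledge you cannot prove. Two problems. First, even before that, your Stokes/degree computation on small spheres requires $\Phi_j\neq 0$ (indeed $|\Phi_j|$ bounded away from $0$) on the chosen $2$-surfaces; hypothesis \eqref{1.fast} only gives integral smallness of $1-|\Phi_j|$, and without some continuity of $|\Phi_j|$ at scale $\epsilon_j$ you cannot select such surfaces. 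The paper addresses exactly this by a regularization lemma: Uhlenbeck gauges on cubes of size $\sim\epsilon$ plus mollification produce a modified section $\tilde\Phi$, $L^2$-close to $\Phi$ and with comparable energy, whose modulus is H\"older continuous at scale $\epsilon$, whence $|\tilde\Phi|\ge\tfrac34$ on a generically chosen $2$-skeleton of a grid of fixed mesh $\ell$. Second, no energy-quantization lower bound is needed at all: since $Z=2d\beta$, the flux $\int_Q Z=\int_{\partial Q}2\beta$ through each $3$-cell is compared, via the Euler-class/degree identity \eqref{beta.quant} for the bundle $\{\Phi\}^\perp=\phi^*TS^2$, to $-4\pi\deg(\phi|_{\partial Q})\in 4\pi\mathbb{Z}$, with error controlled by $\int_{\partial Q}(1-|\Phi|)|F_A|+|d_A\Phi|^2$, which vanishes by the grid selection and \eqref{1.fast}; the resulting integer-weighted Dirac sums have total mass bounded by the energy, so their limits (first $\epsilon\to0$, then $\ell\to0$) are integral $0$-currents. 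In short, the paper replaces the ``hard'' lower bound per unit charge that blocks your argument with the soft combination of exactness of $Z$, the topological quantization of $\beta$ where $|\Phi|\approx1$, and the small-scale regularization of $|\Phi|$; as written, your proposal does not supply a proof of integrality.
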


\begin{remark}
    Note that Theorem \ref{thm: main theorem} only establishes the integrality of the limit current $T=\lim_{j\to\infty}\frac{1}{4\pi}Z(\Phi_j,A_j)$ in the \emph{interior} of $M$; indeed, without additional assumptions, it is not difficult to construct examples where, e.g., $T$ is supported in $\partial M$ and does not have the structure of an integral $(n-3)$-current. On the other hand, as we will see in Section \ref{plat.sec} below, it is possible to guarantee that $T$ is an integral $(n-3)$-current on the full manifold with boundary $M$ by imposing additional natural assumptions on the boundary data $\iota_{\partial M}^*(\Phi_{\epsilon},A_{\epsilon})$.
\end{remark}

The hypothesis \eqref{1.fast}, providing a local, integral version of the `large mass' assumption, deserves some additional comment. First, note that for any sequence $(\Phi_j,A_j)$ with $E_{\epsilon_j}(\Phi_j,A_j)\leq C$ we have
$$\int_M |d|\Phi_j||^2\leq \int_M |d_{A_j}\Phi_j|^2\leq C\epsilon_j$$
and, setting $c_j:=\frac{1}{|M|}\int_M|\Phi_j|$, the Poincar\'e inequality on $M$ gives
$$\int_M |c_j-|\Phi_j||^2\leq C\epsilon_j.$$
Under the mild assumption that $\liminf_{j\to\infty}c_j> 0$, we can then normalize by $\tilde\Phi_j:=c_j^{-1}\Phi_j$ to get a new bounded-energy sequence for which
$$\int_M \frac{|1-|\tilde\Phi_j||^2}{\epsilon_j}=O(1)$$
as $j\to\infty$, and the assumption \eqref{1.fast} simply strengthens this estimate from $O(1)$ to $o(1)$. To see that the improvement from $O(1)$ to $o(1)$ is necessary, fix a function $f\in C^{\infty}(M)$ and a one-form $\alpha\in \Omega^1(M)$, and consider the family
$$\Phi_{\epsilon}=(1+\sqrt{\epsilon} f){\bf i},\text{ }A_{\epsilon}=\bf i \frac{\alpha}{\sqrt{\epsilon}};$$
it is easy to see that $E_{\epsilon}(\Phi_{\epsilon},A_{\epsilon})\leq C$ and $\int_M (1-|\Phi_{\epsilon}|)^2\leq C\epsilon$, but $Z(\Phi_{\epsilon},A_{\epsilon})=2df\wedge d\alpha$ does not concentrate to an $(n-3)$-cycle as $\epsilon \to 0$. On the other hand, for critical points of $E_{\epsilon}$, it is straightforward to obtain estimates somewhat stronger than \eqref{1.fast} in many natural settings, as in the case of Theorem \ref{plateau.thm} below.

Conversely, we show that every integral $(n-3)$-cycle can be approximated by a sequence of pairs satisfying the hypotheses of Theorem \ref{thm: main theorem} such that equality holds in \eqref{ener.lim}, giving the $\limsup$ part of the $\Gamma$-convergence result.

\begin{theorem}\label{thm: recovery}
For every integral $(n-3)$-current $T$
such that $\spt(\partial T)\subset \partial M$ and $[T]=0\in H_{n-3}(M,\partial M; \mathbb{Z})$, there exists a family of smooth pairs $(\Phi_{\epsilon},A_{\epsilon})$ satisfying $|\Phi_\epsilon|\le1$,
$$Z(\Phi_{\epsilon},A_{\epsilon})\rightharpoonup^* 4\pi T$$
as $(n-3)$-currents, 
$$\lim_{\epsilon\to 0}E_{\epsilon}(\Phi_{\epsilon},A_{\epsilon})=4\pi \mathbb{M}(T),$$
and
$$\int_M\frac{(1-|\Phi_{\epsilon}|)^2}{\epsilon}\leq C\mathbb{M}(T)\epsilon\to0.$$
\end{theorem}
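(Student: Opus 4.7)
The plan is to construct $(\Phi_\epsilon,A_\epsilon)$ by inserting suitably rescaled copies of the fundamental BPS monopole in normal slices to a smooth model of $T$, then gluing smoothly to the trivial pair $(\mathbf{i},0)$ outside a tubular neighborhood, using the filling chain provided by $[T]=0$. By a standard diagonal argument for $\Gamma$-$\limsup$ constructions together with the strong flat-norm approximation of integral currents by smooth chains (Federer), it suffices to produce the recovery sequence for $T=\llbracket\Sigma\rrbracket$, with $\Sigma$ a smooth oriented $(n-3)$-submanifold satisfying $\partial\Sigma\subset\partial M$ and $[\Sigma]=0$ in $H_{n-3}(M,\partial M;\Z)$; by perturbing disjoint copies apart, we may further assume multiplicity one. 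The homological triviality yields a smooth oriented $(n-2)$-chain $S\subset M$ with $\partial S=\Sigma$ (modulo $\partial M$), which will serve as the ``Dirac sheet'' of the gauge construction below.

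\textbf{Local monopole profile.} Let $(\Phi^\star,A^\star)$ denote the unit-mass, unit-charge BPS monopole on $\R^3$, so that $\ast d_{A^\star}\Phi^\star=F_{A^\star}$, with $|\Phi^\star|<1$ and $\Phi^\star/|\Phi^\star|$ tending to the radial map $y\mapsto\hat y\in S^2\subset\mathfrak{su}(2)$ at infinity, $|d_{A^\star}\Phi^\star|$ decaying exponentially, $1-|\Phi^\star(y)|$ decaying like $1/|y|$, and
\[
\int_{\R^3}\bigl(|d_{A^\star}\Phi^\star|^2+|F_{A^\star}|^2\bigr)=\int_{\R^3}Z(\Phi^\star,A^\star)=4\pi.
\]
In a tubular neighborhood $N_r(\Sigma)$ with Fermi coordinates $(\sigma,y)\in\Sigma\times B_r$, set
\[
\Phi_\epsilon(\sigma,y):=\Phi^\star(y/\epsilon),\qquad A_\epsilon(\sigma,y):=\tfrac{1}{\epsilon}A^\star(y/\epsilon),
\]
the connection being interpreted in Fermi gauge along $\Sigma$. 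A slice-wise change of variables $y\mapsto\epsilon y$ yields
\[
\int_{N_r(\Sigma)}\bigl(\epsilon^{-1}|d_{A_\epsilon}\Phi_\epsilon|^2+\epsilon|F_{A_\epsilon}|^2\bigr)=4\pi\,\mathbb{M}(T)+O_r(\epsilon),\qquad Z(\Phi_\epsilon,A_\epsilon)\rightharpoonup^* 4\pi T,
\]
and $\int_M(1-|\Phi_\epsilon|)^2\leq C\mathbb{M}(T)\epsilon^2$, giving the claimed control on $\epsilon^{-1}(1-|\Phi_\epsilon|)^2$.

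\textbf{Global extension via the filling.} Outside the $\epsilon$-core, the monopole is gauge-equivalent to $(\mathbf{i},0)$ up to exponentially small errors, via a transformation $g_\epsilon:N_r(\Sigma)\setminus\Sigma\to\SU(2)$ whose restriction to each normal $S^2$-fiber realizes the standard degree-$1$ map. The obstruction to extending $g_\epsilon$ smoothly to $M\setminus\Sigma$ is a class in $H^2(M\setminus\Sigma;\Z)$ Poincar\'e-dual to $[\Sigma]$, which vanishes by hypothesis; concretely, the filling $S$ provides a codimension-one sheet on which the unavoidable jump of $g_\epsilon$ can be placed. I set $g_\epsilon\equiv\mathrm{Id}$ outside $N_r(\Sigma)\cup N_\delta(S)$, smooth the jump across $S$ in an $S$-tube of width $\delta=\delta(\epsilon)\to 0$ chosen so that the transition energy is $o(1)$, and take $(\Phi_\epsilon,A_\epsilon)=(\mathbf{i},0)$ on the bulk complement, interpolating $|\Phi_\epsilon|$ monotonically to $1$ to preserve $|\Phi_\epsilon|\leq 1$.

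\textbf{Conclusion and main difficulty.} Summing the core and transition contributions gives $\limsup_{\epsilon\to0}E_\epsilon(\Phi_\epsilon,A_\epsilon)\leq 4\pi\mathbb{M}(T)$; the matching lower bound from Theorem~\ref{thm: main theorem} upgrades this to equality, and the weak convergence $Z(\Phi_\epsilon,A_\epsilon)\rightharpoonup^* 4\pi T$ together with the $L^2$-estimate on $1-|\Phi_\epsilon|$ follow from the local model. The main difficulty is the global gauge extension: producing a smoothly-interpolated $g_\epsilon$ on the trivial $\SU(2)$-bundle that unwinds the asymptotic Hopf map of each BPS core while carrying only $o(1)$ transition-layer energy. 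The hypothesis $[T]=0$ is essential here, since the filling $S$ provides the only admissible locus for the Dirac sheet of $g_\epsilon$; the fine-tuning of the scale $\delta(\epsilon)$ so that all pieces of the energy vanish simultaneously is the most delicate quantitative point.
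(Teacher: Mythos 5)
Your route is genuinely different from the paper's: the paper reduces to a polyhedral current $P$ and builds a single $S^2$-valued ``hedgehog'' map $v$ on $M\setminus(P\cup S)$ with $\ast d(v^*dA_{S^2})=4\pi P$ and $|dv|\le C/\dist(\cdot,P\cup S)$ (Alberti--Baldo--Orlandi), using the connection $\tfrac12 v^{-1}dv$ away from the monopole cores, so that no gauge trivialization to $(\mathbf{i},0)$ and hence no Dirac sheet ever appears. Your proposal (trivial pair outside a tube, with a fat Dirac sheet along a filling of $T$) is plausible in principle, but as written it has two genuine gaps. First, the reduction: Federer's approximation theorem gives \emph{polyhedral} currents, not smooth embedded submanifolds, and a general integral $(n-3)$-cycle --- let alone its $(n-2)$-dimensional filling $S$ --- cannot be taken smooth; after the correct (polyhedral) reduction, the faces of $P$ meet along an $(n-4)$-skeleton where your Fermi-coordinate ansatz makes no sense, and handling that set (in the paper: the set $K\cup S$, the cutoff $\chi_K$, the $|dv|\le C/\dist$ blow-up estimate, and the scale choice $\delta=\delta(\epsilon)$) is the technical heart of the proof. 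By assuming $\Sigma$ and $S$ smooth you assume away exactly this part.

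Second, the statement that outside the $\epsilon$-core the rescaled BPS monopole is ``gauge-equivalent to $(\mathbf{i},0)$ up to exponentially small errors'' is false, and it hides the quantitative content of your own construction. Outside the core the monopole is only approximately \emph{abelian}: in a suitable gauge $\Phi_\epsilon\approx(1-\tfrac{\epsilon}{2\rho})\mathbf{i}$ and $A_\epsilon\approx\alpha\,\mathbf{i}$, where $d\alpha$ is a long-range Dirac field of size $\sim\rho^{-2}$ carrying quantized flux $4\pi$ through spheres linking $\Sigma$; only the nonabelian components decay exponentially. This long-range flux is precisely what forces the Dirac sheet, and all the work is in estimating it: returning flux $4\pi$ through a $\delta$-tube around $S$ gives curvature of order $\delta^{-2}$ there, hence transition energy of order $\epsilon\,\delta^{-2}\mathcal{H}^{n-2}(S)$, plus matching errors at $\partial N_r(\Sigma)$, in the region where $S$ enters the monopole tube (where $|\Phi_\epsilon|<1$ and the Higgs coupling is not negligible), and at $\partial M$ (since $[T]=0$ only relative to $\partial M$, the sheet is a relative filling and exits through the boundary; the paper handles this by doubling $M$). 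You exhibit no admissible $\delta(\epsilon)$ (one needs at least $\delta\gg\sqrt{\epsilon}$ for the sheet while keeping the other errors small) and no estimate showing these terms are $o(1)$, so the proof is incomplete exactly at what you correctly identify as the main difficulty, in addition to the unaddressed singular set of the polyhedral approximation.
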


The proof of Theorem \ref{thm: recovery} is similar in spirit to that of \cite[Theorem 1.2(ii)]{PPSgamma} in the $\U(1)$-Higgs setting, with rescalings of the standard charge one Bogomolnyi--Prasad-Sommerfield monopole in $\mathbb{R}^3$ playing a role analogous to that of the degree one vortex in \cite{PPSgamma}. 

\begin{remark}
Given a function $\delta: (0,1)\to (0,1)$ satisfying
$$\lim_{\epsilon\to 0}\frac{\delta(\epsilon)}{\epsilon}=\lim_{\epsilon\to 0}\frac{\epsilon^2}{\delta(\epsilon)}=0$$
(e.g., $\delta(\epsilon)=\epsilon^{3/2}$), we can consider instead the perturbed Yang--Mills--Higgs functionals
$$\tilde{E}^{\delta(\epsilon)}_{\epsilon}(\Phi,A):=E_{\epsilon}(\Phi,A)+\frac{1}{\delta(\epsilon)}\int_M(1-|\Phi|^2)^2.$$
It is then easy to see that any family $(\Phi_{\epsilon},A_{\epsilon})$ with 
$$\tilde{E}_{\epsilon}^{\delta(\epsilon)}(\Phi_{\epsilon},A_{\epsilon})\leq C$$
automatically satisfies the hypotheses of Theorem \ref{thm: main theorem}, while the recovery sequence $(\Phi_{\epsilon},A_{\epsilon})$ described in Theorem \ref{thm: recovery} satisfies
$$\lim_{\epsilon\to 0}\tilde{E}_{\epsilon}^{\delta(\epsilon)}(\Phi_{\epsilon},A_{\epsilon})=\lim_{\epsilon\to 0}E_{\epsilon}(\Phi_{\epsilon},A_{\epsilon}).$$
In particular, Theorem \ref{thm: main theorem} and Theorem \ref{thm: recovery} together show that the functionals $\tilde{E}_{\epsilon}^{\delta(\epsilon)}$ $\Gamma$-converge to the $(n-3)$-mass functional in the usual sense, without any additional hypotheses.

From a gauge-theoretic perspective, these perturbed functionals are not quite as interesting as the canonical Yang--Mills--Higgs energies $E_{\epsilon}$, losing their connection to monopoles and instantons. On the the other hand, from a variational perspective, it may be of interest to note that the perturbed functionals $\tilde{E}_{\epsilon}^{\delta(\epsilon)}$ are expected to admit nontrivial, finite-energy critical points on compact manifolds \emph{without} boundary
(e.g., by a variant of the construction in \cite[Section 2]{SternJDG}
or \cite[Section 7]{PS}), in contrast to the usual energies $E_{\epsilon}$, whose finite-energy critical points on closed manifolds consist only of parallel sections and Yang--Mills connections.
\end{remark}

As a concrete illustration of the convergence phenomenon captured by Theorems \ref{thm: main theorem} and \ref{thm: recovery}, we observe next that one can solve area-minimization problems in codimension three via variational methods for the $\SU(2)$-Yang--Mills--Higgs energies.

\begin{theorem}\label{plateau.thm}
 Let $\Gamma^{n-4}\subset \partial M$ be any smooth $(n-4)$-dimensional submanifold of $\partial M$ such that $[\Gamma]=0\in H_{n-4}(M;\mathbb{Z}).$ Then there exists a family of smooth pairs $\Psi_{\epsilon}:\partial M\to \mathfrak{su}(2)$ and $B_{\epsilon}\in \Omega^1(\partial M;\mathfrak{su}(2))$ such that the following holds: letting
 $$\alpha_{\epsilon}(\Psi_\epsilon, B_\epsilon):=\inf\{E_{\epsilon}(\Phi,A)\mid \iota_{\partial M}^*(\Phi,A)=(\Psi_{\epsilon},B_{\epsilon})\},$$
 we have
 $$\lim_{\epsilon\to 0}\alpha_{\epsilon}(\Psi_\epsilon, B_\epsilon) =4\pi\inf\{\mathbb{M}(T)\mid T
 \text{ integral with }\partial T=\Gamma\}$$
 and, for any sequence of pairs $(\Phi_{\epsilon},A_{\epsilon})$ satisfying $$\iota_{\partial M}^*(\Phi_{\epsilon},A_{\epsilon})=(\Psi_{\epsilon},B_{\epsilon}) \quad \text{and} \quad E_{\epsilon}(\Phi_{\epsilon},A_{\epsilon})\leq \alpha_{\epsilon}(\Psi_\epsilon, B_\epsilon) + o(1),$$ we have
 $$Z(\Phi_{\epsilon},A_{\epsilon})\rightharpoonup^* 4\pi T,$$
 where the integral $(n-3)$-current $T$ is a mass-minimizing extension of $\Gamma$.
\end{theorem}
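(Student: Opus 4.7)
The plan has three parts: (i)~produce a mass-minimizing integral $(n-3)$-current $T_0$ with $\partial T_0=\Gamma$; (ii)~construct $(\Psi_\epsilon,B_\epsilon)$ from a recovery sequence for $T_0$ on a slight enlargement of $M$; (iii)~obtain a matching lower bound by gluing any competitor sequence to the recovery on a collar and applying Theorem~\ref{thm: main theorem}.

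For (i), set $m_*:=\inf\{\mathbb{M}(T)\mid T\text{ integral},\ \partial T=\Gamma\}$. The hypothesis $[\Gamma]=0\in H_{n-4}(M;\mathbb{Z})$ makes the competitor class nonempty, and Federer--Fleming compactness for integral currents with prescribed boundary produces a minimizer $T_0$ attaining $m_*$.

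For (ii), let $\hat M:=M\cup(\partial M\times[0,1])$ with an extended metric, and set $\hat T_0:=T_0+\Gamma\times[0,1]$ (the cylinder in the collar, oriented so that $\partial\hat T_0=\Gamma\times\{1\}\subset\partial\hat M$). Following the construction used to prove Theorem~\ref{thm: recovery}, I would produce smooth pairs $(\hat\Phi_\epsilon,\hat A_\epsilon)$ on $\hat M$ with
$$Z(\hat\Phi_\epsilon,\hat A_\epsilon)\rightharpoonup^* 4\pi\hat T_0,\qquad E_\epsilon(\hat\Phi_\epsilon,\hat A_\epsilon)\to 4\pi\mathbb{M}(\hat T_0)=4\pi(m_*+\mathbb{M}(\Gamma)),$$
by inserting suitably rescaled BPS monopoles in tubular neighborhoods of the regular part of $\spt\hat T_0$. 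Setting $(\Phi_\epsilon,A_\epsilon):=(\hat\Phi_\epsilon,\hat A_\epsilon)\big|_M$ and $(\Psi_\epsilon,B_\epsilon):=\iota_{\partial M}^*(\Phi_\epsilon,A_\epsilon)$, only the $M$-portion $T_0\subset\hat T_0$ contributes to the $M$-energy, which gives
$$\alpha_\epsilon(\Psi_\epsilon,B_\epsilon)\leq E_\epsilon(\Phi_\epsilon,A_\epsilon)=4\pi m_*+o(1).$$

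For (iii), let $(\Phi_\epsilon',A_\epsilon')$ be any sequence with $\iota_{\partial M}^*(\Phi_\epsilon',A_\epsilon')=(\Psi_\epsilon,B_\epsilon)$ and $E_\epsilon(\Phi_\epsilon',A_\epsilon')\leq\alpha_\epsilon+o(1)$. Glue with $(\hat\Phi_\epsilon,\hat A_\epsilon)$ across $\partial M$ to obtain pairs $(\tilde\Phi_\epsilon,\tilde A_\epsilon)$ on $\hat M$ (smooth after a routine $o(1)$-energy mollification, since the boundary data agree). Then
$$E_\epsilon(\tilde\Phi_\epsilon,\tilde A_\epsilon)\leq\alpha_\epsilon+4\pi\mathbb{M}(\Gamma)+o(1).$$
A standard $\epsilon$-scaled regularity argument for Yang--Mills--Higgs near-minimizers verifies hypothesis~\eqref{1.fast} on $M$ (and it holds on the collar by construction), so Theorem~\ref{thm: main theorem} applied on the interior of $\hat M$ yields a subsequential limit $Z(\tilde\Phi_\epsilon,\tilde A_\epsilon)\rightharpoonup^* 4\pi\tilde T$, with $\tilde T$ an integral cycle on $\mathrm{int}(\hat M)$ and $4\pi\mathbb{M}(\tilde T)\leq\liminf_\epsilon E_\epsilon(\tilde\Phi_\epsilon,\tilde A_\epsilon)$. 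Because the collar pairs equal $(\hat\Phi_\epsilon,\hat A_\epsilon)$, the collar portion of $\tilde T$ must be $\Gamma\times[0,1]$; writing $T:=\tilde T\mres M$, we therefore have $\mathbb{M}(\tilde T)=\mathbb{M}(T)+\mathbb{M}(\Gamma)$ and $\partial T=\Gamma$ (from $\partial\tilde T|_{\mathrm{int}(\hat M)}=0$). Rearranging gives $\mathbb{M}(T)\leq\alpha_\epsilon/(4\pi)+o(1)$, while $T$ is a Plateau competitor, so $\mathbb{M}(T)\geq m_*$ and hence $\liminf_\epsilon\alpha_\epsilon\geq 4\pi m_*$. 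The two bounds match, and the forced equality $\mathbb{M}(T)=m_*$ shows every subsequential limit $T$ is a mass-minimizing extension of $\Gamma$.

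The main obstacle is step~(ii): the current $\hat T_0$ need not vanish in $H_{n-3}(\hat M,\partial\hat M;\mathbb{Z})$, placing it strictly outside the scope of Theorem~\ref{thm: recovery}. One must therefore adapt the proof of that theorem, producing the recovery directly for currents with free boundary on $\partial\hat M$---intuitively, the bundle-topological obstruction that would force $[T]=0$ in the closed setting is absorbed into the unconstrained behavior of the pair near $\Gamma\times\{1\}\subset\partial\hat M$. A secondary technical point is the verification of \eqref{1.fast} for the near-minimizers, via an $\epsilon$-scaled Euler--Lagrange estimate bounding $\|1-|\Phi_\epsilon'|\|_{L^2}^2$ by a small multiple of $\epsilon$.
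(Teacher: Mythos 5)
Your overall architecture is the same as the paper's: enlarge $M$ by a collar, augment the filling by the cylinder over $\Gamma$, run the recovery construction there, take boundary traces to define $(\Psi_\epsilon,B_\epsilon)$, and for the lower bound glue an arbitrary competitor to the exterior pair and apply Theorem \ref{thm: main theorem}. The decisive gap is your treatment of hypothesis \eqref{1.fast} for an arbitrary family with $\iota_{\partial M}^*(\Phi_\epsilon,A_\epsilon)=(\Psi_\epsilon,B_\epsilon)$ and $E_\epsilon\le\alpha_\epsilon+o(1)$. There is no ``standard $\epsilon$-scaled regularity argument'' here: near-minimizers satisfy no Euler--Lagrange equation, and even an $O(\epsilon)$ bound on $\int_M(1-|\Phi_\epsilon|)^2$ would not suffice --- the introduction's example $\Phi_\epsilon=(1+\sqrt{\epsilon}f)\mathbf{i}$, $A_\epsilon=\mathbf{i}\,\alpha/\sqrt{\epsilon}$ has bounded energy and $\int_M(1-|\Phi_\epsilon|)^2\le C\epsilon$ yet fails to concentrate, so the strict $o(\epsilon)$ improvement must be extracted from the specific boundary data. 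The paper does this in Proposition \ref{plat.prop} by a variational comparison: replace $\Phi_\epsilon$ by the minimizer $\Phi_\epsilon^{min}$ of $\Phi\mapsto\int_M|d_{A_\epsilon}\Phi|^2$ with the same trace, use $|\Phi_\epsilon^{min}|\le 1$ and $d^*d(1-|\Phi_\epsilon^{min}|^2)=2|d_{A_\epsilon}\Phi_\epsilon^{min}|^2$, compare with the explicit exterior extension (which obeys the strong $O(\epsilon^2)$ estimate of Remark \ref{onephi.est}), apply Rellich compactness to conclude $\epsilon^{-1/2}(1-|\Phi_\epsilon^{min}|^2)\to 0$ in $L^2$, and then transfer back to $\Phi_\epsilon$ because the energy gap $E_\epsilon(\Phi_\epsilon,A_\epsilon)-E_\epsilon(\Phi_\epsilon^{min},A_\epsilon)=o(1)$ controls $\epsilon^{-1}\int_M|d_{A_\epsilon}(\Phi_\epsilon-\Phi_\epsilon^{min})|^2$ and Poincar\'e applies since the difference vanishes on $\partial M$. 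Without some version of this argument your application of Theorem \ref{thm: main theorem} in step (iii) is unjustified.

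Three further points. First, gluing: equality of $\iota_{\partial M}^*(\Phi,A)$ on the two sides matches only the tangential data, so the normal components $A(\nu)$ may jump across $\partial M$; a ``routine $o(1)$-energy mollification'' of a discontinuous connection is not justified. The paper's Lemma \ref{paste.lem} resolves this by gauge transformations equal to the identity on $\partial M$ (exponentials of the normal component) that set $A(\nu)=0$ from both sides, after which the glued pair is Lipschitz and energy and $Z$ are unchanged by gauge invariance. Second, your step (ii) uses an exact mass-minimizer $T_0$: for the energy on the collar and on $M$ to split as you claim (and later to identify the collar part of the limit with $\Gamma\times[0,1]$), the current must agree with the cylinder over $\Gamma$ near $\partial M$, i.e.\ meet $\partial M$ transversally, and this cannot be arranged for an exact minimizer without convexity assumptions; the paper therefore works with near-minimizers $S_j$, each modified near $\partial M$, and defines $(\Psi_\epsilon,B_\epsilon)$ by a diagonal choice in $j$ --- you would need the same device. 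Third, the homological obstacle you flag is where your upper bound currently has no proof: you correctly observe that $\hat T_0$ need not be a relative boundary, but then only promise to ``adapt'' Theorem \ref{thm: recovery}. The paper needs no free-boundary recovery theorem: for a filling $S$ whose relative class vanishes, writing $S=\partial R+Q$ with $\spt Q\subset\partial M$ and pushing $Q$ through the collar shows that $S+\hat\Gamma$ is an honest relative boundary in the enlarged manifold, so Theorem \ref{thm: recovery} applies verbatim (this is the content of Lemma \ref{lemma: boundary approximation plateau}); your construction should invoke exactly this observation rather than a new recovery statement.
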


Roughly speaking, Theorem \ref{plateau.thm} shows that pairs $(\Phi_{\epsilon},A_{\epsilon})$ minimizing $E_{\epsilon}$ with respect to boundary data concentrating along a prescribed codimension-four cycle $\Gamma$ converge to an $(n-3)$-current $T$ minimizing area among all currents with boundary $\partial T=\Gamma$. However, there is a subtle technical point which prevents us from stating the theorem in this form: in high dimensions, it is not clear a priori in what sense a minimizing pair \emph{exists}, or what kind of partial regularity such a pair should enjoy. This is very closely related to the existence and partial regularity problem for minimizers of the Yang-Mills energy in supercritical dimension; we refer the reader to the recent paper of Caniato--Rivi\`ere for further discussion of these issues and recent progress in dimension five \cite{CaniatoRiviere}.

\subsection{Large mass limits of generalized monopoles and general critical points}\hfill\\
Again, let $P=\SU(2)\times M$ be the trivial $\SU(2)$-bundle over a compact Riemannian manifold $(M^n,g)$ with boundary $\partial M$.
Following \cite[Section 1.3]{OliveiraThesis}, suppose now that $M$ carries a calibration $(n-3)$-form, i.e., a closed $(n-3)$-form $\Theta$ with comass one at every point, and call a pair $(\Phi,A)$ a \emph{$\Theta$-monopole} if
\begin{equation}\label{gen.mono}
    *d_A\Phi=F_A\wedge \Theta,
\end{equation}
or more generally, after rescaling $\Phi$,
\begin{equation}\label{eps.mono}
*d_A\Phi=\epsilon F_A\wedge \Theta
\end{equation}
for some $\epsilon>0$. 

Of particular interest in light of the Donaldson--Segal program are the cases of $G_2$-monopoles, where $(M^7,g)$ is a $G_2$-manifold and $\Theta$ is the coassociative $4$-form, and Calabi--Yau monopoles on a Calabi--Yau $3$-fold $(M^6,g)$, where $\Theta$ is the real part of the holomorphic volume form and $F_A\wedge \omega^2=0$ for the K\"ahler form $\omega$ (see, e.g., \cite[Chapters 3--4]{OliveiraThesis}). In \cite{DonaldsonSegal}, Donaldson and Segal suggest that $G_2$-monopoles on a complete, noncompact $G_2$-manifold $M$ with a well-defined mass at infinity
$$m=\lim_{|x|\to \infty}|\Phi(x)|$$
should concentrate as $m\to\infty$ along coassociative submanifolds,
which are four-dimensional cycles calibrated by $\Theta$, and predict an analogous large-mass convergence of Calabi--Yau monopoles to special Lagrangian submanifolds in the Calabi--Yau setting, as has been confirmed by Oliveira in some special cases \cite{OliveiraThesis}.

Equivalent to the study of large-mass asymptotics for the equation \eqref{gen.mono}, one can consider instead the $\epsilon\to 0$ behavior of solutions to \eqref{eps.mono} satisfying $|\Phi|\to 1$ at infinity. Restricting attention to compact subsets of complete $G_2$-manifolds or Calabi--Yau $3$-folds, the conjectural picture of Donaldson and Segal suggests the following local question.

\begin{question}
    In the setting of Theorem \ref{thm: main theorem}, suppose in addition that the pairs $(\Phi_j,A_j)$ solve the $\epsilon_j$-monopole equation \eqref{eps.mono}. Under what conditions does it follow that the limiting $(n-3)$-current $T$ is calibrated by $\Theta$? In particular, does this always hold for $G_2$-monopoles or Calabi--Yau monopoles?
\end{question}

As an immediate application of Theorem \ref{thm: main theorem}, we have the following.

\begin{corollary}\label{mono.lim}
    In addition to the assumptions of Theorem \ref{thm: main theorem}, suppose that $(\Phi_j,A_j)$ satisfies the generalized monopole equation
    \begin{equation}\label{quasi.mono}
        *d_{A_j}\Phi_j=\epsilon_j F_{A_j}\wedge \Theta
    \end{equation}
    with respect to a fixed calibration form $\Theta\in \Omega^{n-3}(M)$. Writing
    $$\nu:=\lim_{j\to\infty}(\epsilon_j^{-1}|d_{A_j}\Phi_j|^2+\epsilon_j|F_{A_j}\wedge \Theta|^2) \,\dvol_g = \lim_{j\to\infty} 2\epsilon_j^{-1}|d_{A_j}\Phi_j|^2 \,\dvol_g,$$
    the weight measure $|T|$ of the limiting integral $(n-3)$-current $T$ satisfies
    \begin{equation}\label{ener.below}
    \nu\leq 4\pi |T|,
    \end{equation}
    with equality on $M\setminus\de M$ if and only if $T$ is calibrated by $\Theta$. In particular, in the interior of $M$, $\nu$ is always $(n-3)$-rectifiable, and if $\nu=\mu$ then $\frac{1}{4\pi}\mu=|T|=\frac{1}{4\pi}\nu$ is the weight measure of a calibrated $(n-3)$-current.
\end{corollary}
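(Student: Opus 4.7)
The plan is to exploit the monopole equation to derive a pointwise identity between $Z(\Phi,A)\wedge\Theta$ and the Dirichlet energy density, then invoke the current convergence from Theorem~\ref{thm: main theorem}. The key identity I would establish is
\[
Z(\Phi,A)\wedge\Theta = 2\epsilon^{-1}|d_A\Phi|^2\,\dvol_g,
\]
valid pointwise for any pair satisfying $*d_A\Phi = \epsilon F_A\wedge\Theta$. To verify it, I would write $Z(\Phi,A)\wedge\Theta = 2\Re(d_A\bar\Phi\wedge F_A\wedge\Theta)$, substitute $F_A\wedge\Theta = \epsilon^{-1}*d_A\Phi$ from the monopole equation, and compute $\Re(d_A\bar\Phi\wedge*d_A\Phi) = |d_A\Phi|^2\,\dvol_g$. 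The last step is a short quaternionic calculation: expanding an $\mathfrak{su}(2)$-valued $1$-form as $\omega = \omega^a e_a$ in the basis $\{\bfi,\bfj,\bfk\}$, the decomposition $e_ae_b = -\delta_{ab} + \epsilon_{abc}e_c$ has antisymmetric part $\epsilon_{abc}e_c$ that contracts to zero against the symmetric pairing $\langle\omega^a,\omega^b\rangle$, leaving $\omega\wedge*\omega = -|\omega|^2\,\dvol_g$; since $\bar\Phi = -\Phi$ gives $d_A\bar\Phi = -d_A\Phi$, the sign works out.

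Once the pointwise identity is in hand, I would test it against an arbitrary $f\in C^\infty_c(M\setminus\partial M)$:
\[
\int_M 2f\epsilon_j^{-1}|d_{A_j}\Phi_j|^2\,\dvol_g = \int_M Z(\Phi_j,A_j)\wedge(f\Theta).
\]
The left-hand side converges to $\int_M f\,d\nu$ by the definition of $\nu$, while the right-hand side converges to $4\pi\, T(f\Theta)$ by the weak-$*$ convergence $Z(\Phi_j,A_j)\rightharpoonup^* 4\pi T$ of Theorem~\ref{thm: main theorem}. Since $T$ restricts to an integral $(n-3)$-cycle in the interior, it admits a rectifiable representation $T = \tau\,|T|$ with a unit simple orienting $(n-3)$-vector field $\tau$, yielding $T(f\Theta) = \int f\langle\tau,\Theta\rangle\,d|T|$. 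Comparing the two limits produces the measure identity $\nu = 4\pi\langle\tau,\Theta\rangle\,|T|$ on $M\setminus\partial M$.

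The calibration property $|\langle\tau,\Theta\rangle|\leq 1$ then immediately gives $\nu\leq 4\pi|T|$ on the interior, with equality precisely when $\langle\tau,\Theta\rangle\equiv 1$ on $\spt T$, which is the definition of $T$ being calibrated by $\Theta$. Rectifiability of $\nu$ on the interior is inherited from $|T|$ via absolute continuity. For the final statement, if $\nu = \mu$ globally, then combining the bound $\mu\geq 4\pi|T|$ from Theorem~\ref{thm: main theorem} with the inequality $\nu\leq 4\pi|T|$ just established forces $\mu = \nu = 4\pi|T|$, and in particular forces $T$ to be calibrated. The substantive technical content lies entirely in the quaternionic pointwise identity of the first step; once that is established, the corollary reduces to testing against smooth forms and invoking the already-proved $\Gamma$-convergence.
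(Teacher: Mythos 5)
Your proposal is correct and follows essentially the same route as the paper: the monopole equation turns $Z(\Phi_j,A_j)\wedge\Theta$ into $2\epsilon_j^{-1}|d_{A_j}\Phi_j|^2\,\dvol_g$ pointwise (the same quaternionic computation $\Re(d_A\bar\Phi\wedge *d_A\Phi)=|d_A\Phi|^2\,\dvol_g$), and then one passes to the limit using $Z(\Phi_j,A_j)\rightharpoonup^* 4\pi T$ and the calibration property, with the final claim obtained by combining with $4\pi|T|\le\mu$ from Theorem \ref{thm: main theorem}. The only (cosmetic) difference is that you route the comparison through the rectifiable representation $T=\tau\,|T|$ on the interior, whereas the paper simply uses the comass bound $\langle T,\chi\Theta\rangle\le\int_M\chi\,d|T|$ for nonnegative $\chi\in C^0(M)$, which gives \eqref{ener.below} on all of $M$ rather than only on $M\setminus\partial M$; your argument is upgraded to the full statement by that one-line substitution.
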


\begin{proof}
The bound \eqref{ener.below} is a straightforward consequence of Theorem \ref{thm: main theorem}, together with the observation that
\begin{align*}
    Z(A,\Phi)\wedge \Theta&=2 \Re (d_A\bar{\Phi}\wedge F_A\wedge \Theta)\\
    &=2\epsilon_j^{-1}\Re(d_A\bar{\Phi}\wedge *d_A\Phi)\\
    &=2\epsilon_j^{-1}|d_A\Phi|^2 
\end{align*}
for any solution $(\Phi,A)$ of \eqref{quasi.mono}. In particular, passing to limits as $j\to\infty$ gives
$$\int_M \chi \,d\nu=4\pi\langle T,\chi \Theta\rangle$$
for any test function $0 \leq\chi\in C^0(M)$, and since $\Theta$ is a calibration, we know that 
$$\int_M \chi\, d\nu= 4 \pi \langle T,\chi\Theta\rangle\leq 4 \pi \int_M \chi \,d|T|,$$
with equality for all $\chi\in C^0_c(M\setminus\de M)$ if and only if $T$ is calibrated by $\Theta$ (on $M\setminus\de M$). Moreover, since $4\pi|T|\leq \mu$ by Theorem \ref{thm: main theorem}, we deduce that if $\mu=\nu$ then $\mu=4\pi|T|=\nu$, and $T$ must be calibrated.
\end{proof}

For $G_2$-monopoles in asymptotically conical $G_2$ manifolds, the $4$-rectifiability of the `intermediate energy' measure $\nu$ was previously proved in \cite{FadelThesis} via PDE methods under slightly different hypotheses, including the additional assumption that $|F_{A_j}|^2-|F_{A_j}\wedge \Theta|^2$ is uniformly bounded in $L^{\infty}$. In fact, if we impose such a bound alongside the hypotheses of Corollary \ref{mono.lim}, it follows immediately that 
$$\mu-\nu=\lim_{j\to\infty}\epsilon_j(|F_{A_j}|^2-|F_{A_j}\wedge \Theta|^2)\,\dvol_g=0,$$
and therefore Corollary \ref{mono.lim} gives that $\frac{1}{4\pi}\nu=|T|=\frac{1}{4\pi}\mu$ is indeed calibrated, and in particular coassociative, in the case of $G_2$-monopoles. However, it is unclear to what extent this $L^{\infty}$ bound on $|F_{A_j}|^2-|F_{A_j}\wedge \Theta|^2$ can be justified in general settings of interest. 

More generally, the question of balancing between the terms $\epsilon^{-1}|d_A\Phi|^2$ and $\epsilon|F_A|^2$, at least in an integral sense at small scales, is central to understanding concentration phenomena for critical points of $E_{\epsilon}$ in the $\epsilon\to 0$ limit. Indeed, standard computations show that, on small balls $B_r(x)\subset M$, critical pairs $(\Phi,A)$ for $E_{\epsilon}$ satisfy
$$\frac{d}{dr}\left(r^{3-n}\int_{B_r(x)}(\epsilon^{-1}|d_A\Phi|^2+\epsilon|F_A|^2)\right) \gtrsim r^{2-n}\int_{B_r(x)}(\epsilon^{-1}|d_A\Phi|^2-\epsilon|F_A|^2).$$
Thus, controlling $\int_{B_r}\epsilon|F_A|^2$ by $\int_{B_r}\epsilon^{-1}|d_A\Phi|^2$ up to smaller terms is essential for upgrading the obvious codimension-four monotonicity of energy to a sharper codimension-three monotonicity in settings where codimension-three concentration is expected. In the Allen--Cahn and $\U(1)$-Higgs settings, analogous balancing results between different components of the energy can be proved in great generality at the level of pointwise estimates \cite{Modica, PS}, but simple examples show that the same cannot be expected for $\SU(2)$-Yang--Mills--Higgs pairs. In particular, the degenerate case where $\Phi=0$ and $A$ is a Yang--Mills connection shows that the codimension-four energy growth is optimal for general critical pairs, without some largeness condition on the Higgs field $\Phi$. 

Even with a largeness condition on $\Phi$, one also has to contend with the `abelian' case where $|\Phi|=1$ and $d_A\Phi=0$.
In this case, the curvature $F_A$ reduces to a real-valued harmonic two-form: indeed,
locally and up to a change of gauge, $\Phi=\mathbf{i}$ and $A=\alpha\mathbf{i}$, so that $F_A=d\alpha\mathbf{i}$ and $d^*d\alpha=0$. In this case $|F_A|^2$ could be large a priori, but is controlled pointwise by the Yang--Mills energy, adding at worst a diffuse component to the limiting energy measure, reminiscent of the situation for the (ungauged) complex Ginzburg--Landau equations described in \cite[Remark 1.2]{SternJDG}. With these caveats in mind, the following seems like the natural \emph{local} question, which we hope to address in future work.

\begin{question}\label{cp.lim}
    Let $P\to M^n$ be the trivial $\SU(2)$-bundle over a compact Rimannian manifold with boundary of dimension $n\geq 4$, and let $(\Phi_{\epsilon},A_{\epsilon})$ be a family of critical pairs for the $\SU(2)$-Yang--Mills--Higgs energies $E_{\epsilon}$ such that $|\Phi_{\epsilon}|\leq 1$, $E_{\epsilon}(\Phi_{\epsilon},A_{\epsilon})\leq C,$ and $\int_M(1-|\Phi_{\epsilon}|)^2$ vanishes sufficiently fast. 
    In the interior of $M$, does the limiting energy measure $$\mu=\lim_{\epsilon\to 0}(\epsilon^{-1}|d_{A_{\epsilon}}\Phi_{\epsilon}|^2+\epsilon|F_{A_{\epsilon}}|^2) \,\dvol_g$$
    necessarily decompose as the sum $\mu=|V|+|h|^2\,\dvol_g$ of the weight of a stationary, rectifiable $(n-3)$-varifold $V$ and the energy density of a harmonic $2$-form $h$?
\end{question}

An affirmative answer would certainly provide the tools required to prove that $\nu=4\pi|T|$ in the setting of Corollary \ref{mono.lim} for $G_2$-monopoles or Calabi--Yau monopoles, confirming that $T$ is indeed coassociative or special Lagrangian, respectively. 

Note that in Question \ref{cp.lim} we have not made reference to the problem of integrality, i.e., whether the density of the $(n-3)$-varifold $V$ takes values in $4\pi\mathbb{N}$ $|V|$-almost everywhere. Indeed, integrality cannot be expected in general, in light of Taubes's construction of entire Yang--Mills--Higgs pairs of finite energy on $\mathbb{R}^3$ that are not monopoles, whose energy need not take values in $4\pi \mathbb{N}$ \cite{TaubesNonMinimalI, TaubesNonMinimalII}. On the other hand, these non-monopole pairs on $\mathbb{R}^3$ are necessarily unstable by \cite{TaubesStable}, and their trivial pullback to $\mathbb{R}^3\times \mathbb{R}^k$ must have infinite Morse index for any $k\geq 1$, suggesting the following refinement of Question \ref{cp.lim}.

\begin{question}\label{cp.ind.lim}
    In the situation of Question \ref{cp.lim}, assuming a positive answer, suppose moreover that the pairs $(\Phi_{\epsilon},A_{\epsilon})$ have uniformly bounded Morse index as critical points of $E_{\epsilon}$. Is $\frac{1}{4\pi}V$ then an integral varifold?
\end{question}

\begin{remark}
    In the base dimension $n=3$, one needs to replace the assumption of bounded Morse index with stability; in this case, a positive answer to Question \ref{cp.ind.lim} seems to follow from the analysis of \cite{FadelOliveira} together with \cite{TaubesStable}.
\end{remark}

\subsection*{Acknowledgements} 

The authors thank Gonçalo Oliveira and Daniel Fadel for helpful exchanges about monopoles and comments on an earlier draft of this paper. We also thank Riccardo Caniato and Tristan Rivi\`ere for answering questions about their work \cite{CaniatoRiviere}. D.P. acknowledges the support of the AMS-Simons travel grant. During the completion of this project, A.P. was partially supported by ERC grant 101165368, while D.S. was partially supported by NSF grant DMS 2404992 and the Simons Foundation award MPS-TSM-00007693.

\section{Preliminaries, notation, and some key identities}\label{prelim}

Let $(M^n,g)$ be a Riemannian manifold of dimension $n\geq 3$, and consider the trivial principal $\SU(2)$-bundle $P= \SU(2)\times M$. We denote by $E$ the associated adjoint bundle of $P$, i.e., the vector bundle over $M$ given by
$$E = P \times_{\Ad} \mathfrak{su}(2) \cong \mathfrak{su}(2)\times M,$$
where $\Ad \colon \SU(2) \rightarrow \mathrm{GL}(\mathfrak{su}(2))$ is the usual adjoint representation,
given by $T\mapsto \gamma T\gamma^{-1}$ for every $\gamma\in \SU(2)$, $T\in \mathfrak{su}(2)$. 
Note that, as a manifold and Lie group, $\SU(2)$ is the same as the $3$-sphere $S^3$, viewed as a subset of the quaternions $S^3\subset \mathbb{H}$. For convenience, we will henceforth identify $\SU(2)$ with quaternions of unit norm (sometimes called \textit{versors}), i.e., the compact symplectic group $\Sp(1)$.

Coherently, the Lie algebra $\mathfrak{su}(2)$ 
will be identified with the imaginary quaternions 
$\Im(\mathbb{H})$, i.e.,
$$\mathfrak{su}(2)\cong \Im(\mathbb{H}) = \{a\mathbf{i} + b\mathbf{j} + c\mathbf{k} \mid (a,b,c)\in \mathbb{R}^3\},$$
where 
$$\bfi^2 = \bfj^2 = \bfk^2 = \bfi \bfj \bfk = - 1,\quad\bfi \bfj = - \bfj \bfi = \bfk,
\quad \bfj \bfk = - \bfk \bfj = \bfi,\quad\bfk \bfi = - \bfi \bfk = \bfj.$$
Although we do not need an explicit identification, a standard one is given by the Pauli matrices
$$a\mathbf{i} + b\mathbf{j} + c\mathbf{k} \leftrightarrow
  a \begin{pmatrix}
        i & 0 \\
        0 & -i
    \end{pmatrix}
+ b \begin{pmatrix}
        0 & -1 \\
        1 & 0 
    \end{pmatrix}
+ c \begin{pmatrix}
        0 & i \\
        i & 0
    \end{pmatrix}.$$
Under this identification, we equip $\mathfrak{su}(2)$ with the Euclidean inner product
$$\langle u,v\rangle:=\Re(\bar{u}v)=-\Re(uv), $$ so that for any imaginary quaternion $u \in \Im(\mathbb{H})$ we have the useful identity
$$u^2=-|u|^2.$$ 

\begin{remark}\label{factors.2}
    In many sources, such as the monograph \cite{JaffeTaubes}, the inner product on $\mathfrak{su}(2)$
    is taken to be $\langle a,b\rangle_{JT}:=-2\operatorname{tr}(ab)$, which
    agrees with the one used here multiplied by a factor $4$ (after the previous identification).
    Thus, for instance, the boundary condition $|\Phi|_{JT}\to 1$ at infinity becomes $|\Phi|\to\frac12$ in our convention, and the Yang--Mills--Higgs energies differ by a constant factor. Of course, it is easy to translate between the two conventions, but some care is needed when comparing results in \cite{JaffeTaubes} and the formulations here.
\end{remark}

\subsection{\for{toc}{Conventions for $\mathfrak{su}(2)$-valued and $\mathbb{H}$-valued forms}\except{toc}{Conventions for $\bm{\mathfrak{su}(2)}$-valued and $\bm{\mathbb{H}}$-valued forms}}\hfill\\
In general, given a vector bundle $E\to M$
and a connection $\nabla$, recall that we can extend it to a differential operator
$$ d_\nabla:\Omega^k(M;E)\to\Omega^{k+1}(M;E) $$
by declaring that $d_\nabla:=\nabla$ on $0$-forms (i.e., sections $\Gamma(E) = \Omega^0(M; E)$) and that, for a pure tensor $s\eta=s\otimes \eta$ (with $s$ a section and $\eta$ a real-valued $k$-form), we have
$$ d_\nabla(s\eta):=\nabla s\wedge\eta+s\,d\eta. $$
A standard computation, which may be taken as the definition of the curvature, shows that
$$ d_\nabla\circ d_\nabla(\omega)=F_\nabla\wedge\omega, $$
where $F_\nabla\in\Omega^2(M;\operatorname{End}(E))$ is the curvature of $\nabla$ and the latter wedge product is defined by the rule $F_\nabla\wedge(s\eta):=(F_\nabla s)\wedge\eta$.

If $E$ has structure group $G$ and is modeled on its Lie algebra $\mathfrak{g}$ (with the adjoint action of $G$ on $\mathfrak{g}$), then we can write $\nabla s=ds+[A,s]$ in a local trivialization,
for a $\mathfrak{g}$-valued one-form $A$.
For general $\mathfrak{g}$-valued forms $\omega,\zeta$, we define $[\omega\wedge\zeta]$ by requiring that,
in a local trivialization,
$$[s\eta\wedge t\theta]=[s,t]\eta\wedge\theta, $$ where $s,t$ take values in $\mathfrak{g}$, while $\eta$ and $\theta$ are differential forms on $M$. If $\omega$ has order $k$ and $\zeta$ has order $\ell$, then
an application of the Jacobi identity gives
$$ d_\nabla[\omega\wedge\zeta]=[(d_\nabla\omega)\wedge\zeta]+(-1)^k[\omega\wedge(d_\nabla\zeta)], $$
but note carefully that
$$[\omega\wedge\zeta]=(-1)^{k\ell+1}[\zeta\wedge\omega].$$
In the same local trivialization, we compute that
$$d_\nabla \omega=d\omega+[A\wedge\omega].$$
In particular, if $s$ is a constant section here (namely, $ds=0$), then
$$F_\nabla s=d_\nabla(d_\nabla s)=d_\nabla[A,s]=[d_\nabla A,s]-[A,d_\nabla s]
=[dA+[A\wedge A],s]-[A\wedge[A,s]].$$
Since we also have $d_\nabla[A,s]=d[A,s]+[A\wedge[A,s]]=[dA,s]+[A\wedge[A,s]]$, we obtain
$$[[A\wedge A],s]-[A\wedge[A,s]]=[A\wedge[A,s]],$$
and thus $[A\wedge[A,s]]=\frac12[[A\wedge A],s]$, giving
$$F_\nabla=dA+\frac12[A\wedge A]\quad\text{i.e.,}\quad F_\nabla s=\left[dA+\frac12[A\wedge A],s\right],$$
hence, $d_\nabla A=F_\nabla+\frac12[A\wedge A]$. Applying $d_{\nabla}$ and using the fact that $[A\wedge[A\wedge A]]=0$,
one recovers the classic Bianchi identity $d_\nabla F_\nabla=0.$

In the setting of this paper, where we deal with a trivial bundle for simplicity, we have sections and forms taking values in the imaginary quaternions $\Im(\mathbb{H})$, so that our $\mathfrak{su}(2)$-valued forms $\Omega^k(M;\mathfrak{su}(2))$ can be regarded as belonging to the larger space $\Omega^k(M;\mathbb{H})$ of quaternion-valued forms. On $\Omega^k(M;\mathbb{H})\times \Omega^l(M;\mathbb{H})$, we can define another natural wedge product by
$$s\eta\wedge t\theta=(st)\eta\wedge\theta\in \Omega^{k+l}(M;\mathbb{H})$$
and, since $[s,t]=st-ts$, it follows that
$$[\omega\wedge\zeta]=\omega\wedge\zeta-(-1)^{kl}\zeta\wedge\omega$$ 
for $\omega\in\Omega^k(M;\mathfrak{su}(2))$ and $\zeta\in \Omega^l(M;\mathfrak{su}(2)).$
 
In particular, $[A\wedge A]=2A\wedge A$, giving
$$F_\nabla=dA+A\wedge A.$$
Also, denoting by $\Re: \Omega^k(M;\mathbb{H})\to \Omega^k(M)$ the real part of a quaternion-valued form, so that $\Re(\beta)=0$ if and only if $\beta\in \Omega^*(M;\mathfrak{su}(2))$, a straightforward computation yields the identities
\begin{equation*}
\Re(\omega\wedge\zeta)=(-1)^{k\ell}\Re(\zeta\wedge\omega)
\end{equation*}
and
\begin{equation} \label{eq: d Re}
d\Re(\omega\wedge\zeta)=\Re(d_\nabla\omega\wedge\zeta)+(-1)^{k}\Re(\omega\wedge d_\nabla\zeta)
\end{equation}
whenever $\omega\in \Omega^k(M;\mathfrak{su}(2))$ and $\zeta\in\Omega^l(M;\mathfrak{su}(2))$. As a consequence, for any $\phi\in\Omega^0(M;\mathfrak{su}(2)),$ we have
$$d\Re(\phi A)=\Re(d_\nabla\phi\wedge A)+\Re(\phi\,d_\nabla A)
=\Re((d\phi+[A,\phi])\wedge A)+\Re(\phi(F_\nabla+A\wedge A)).$$
Now we can compute that $\Re([A,\phi]\wedge A)=-2\Re(\phi A\wedge A)$, giving
\begin{equation} \label{equation: dRe}
    d\Re(\phi A)=\Re(d\phi\wedge A)+\Re(\phi F_\nabla)-\Re(\phi A\wedge A).
\end{equation}
We could have reached the same conclusion also by writing directly $dA=F_\nabla-A\wedge A$. We will denote by $\mathcal{A}(P)$ the space of smooth connections on a principal bundle $P$.  

\subsection{\for{toc}{The Yang--Mills--Higgs functionals and $Z(\Phi,A)$}\except{toc}{The Yang--Mills--Higgs functionals and $\bm{Z(\Phi,A)}$}}\hfill\\
Let us focus on the case of trivial bundles $P=SU(2)\times M$
and $E=\mathfrak{su}(2)\times M$
and, given $\nabla\in\mathcal{A}(P)$, let us identify it with the corresponding
one-form $A\in\Omega^1(M;\mathfrak{su}(2))$, writing $\nabla=d_A$ in computations.

Given a smooth section $\Phi\in \Gamma(E)$ and a smooth connection $A \in \mathcal{A}(P)$ on $P$, for any $\epsilon \in (0, 1)$, consider the $\SU(2)$-Yang--Mills--Higgs functionals
\begin{equation*}
    E_{\epsilon}(\Phi,A):=\int_M \left( \frac{1}{\epsilon}|d_A \Phi|^2+\epsilon|F_A|^2 \right) \, \dvol_g = \int_M e_{\epsilon}(\Phi, A) \, \dvol_g. 
\end{equation*}
Note that, cf.\ the previous section, $d_A \Phi = \nabla \Phi = d\Phi + [A, \Phi]$ in a local trivialization.
Although we do not make use of them in what follows, we observe that the Euler--Lagrange equations for $E_{\epsilon}$ are given by 
\begin{equation} \label{eq: EL}
    \begin{cases}
        d_A^\ast d_A \Phi = 0, \\
        \epsilon^2 d_A^\ast F_A + [\Phi, d_A \Phi] = 0, 
    \end{cases}
\end{equation}
where for $\alpha \in \Omega^k(M; E)$ we have 
\begin{equation*}
   (d_A \alpha)(X_0, X_1, \ldots, X_k) = \sum_{j = 1}^k (-1)^j (\nabla_{X_j} \alpha)(X_0, \ldots, \widehat{X}_j, \ldots, X_k), 
\end{equation*}
where $\widehat{X}_j$ means that $X_j$ has been omitted, and its adjoint
\begin{equation*}
    (d_A^\ast \alpha)(X_1, X_2, \ldots, X_{k - 1}) = - \sum_{j = 1}^n (\nabla_{e_j} \alpha)(e_j, X_1, \ldots, X_{k - 1}), 
\end{equation*}
for an orthonormal basis $\{e_j\}_{j = 1}^n$ of $T_pM$ at $p$. 

Next, for any pair $A\in\mathcal{A}(P)$, $\Phi\in \Gamma(M;\mathfrak{su}(2))$, we introduce the three-form
\begin{equation} \label{eq: 3-form Z}
    Z(\Phi, A):= 2\Re (d_A \bar{\Phi}\wedge F_A).
\end{equation}
Throughout the paper, we identify $Z(\Phi, A)$ with an $(n - 3)$-current via the assignment
\begin{equation} \label{eq: def current Z}
    \langle Z(\Phi, A), \eta \rangle := \int_M Z(\Phi, A) \wedge \eta, 
\end{equation}
for all $\eta \in \Omega^{n - 3}(M)$. We observe next that $Z(\Phi,A)$ is bounded pointwise by the energy density $e_{\epsilon}(\Phi,A)$.

\begin{proposition}\label{trivial.ub}
    At each point we have
    $$\vert Z(\Phi, A) \vert \le 2|d_A\Phi||F_A| \leq \frac{1}{\epsilon}|d_A\Phi|^2+\epsilon|F_A|^2.$$
\end{proposition}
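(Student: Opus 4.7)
The second inequality $2|d_A\Phi||F_A|\le\frac{1}{\epsilon}|d_A\Phi|^2+\epsilon|F_A|^2$ is just Young's inequality applied to $a=|d_A\Phi|/\sqrt{\epsilon}$ and $b=\sqrt{\epsilon}|F_A|$, so the substance lies in the first bound $|Z(\Phi,A)|\le 2|d_A\Phi||F_A|$, which is a purely pointwise assertion. My plan is to reduce it to an elementary inequality for real-valued forms by decomposing $d_A\Phi$ and $F_A$ along a fixed orthonormal basis of $\mathfrak{su}(2)$.

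Fixing the standard basis $\{\bfi,\bfj,\bfk\}$ of $\mathfrak{su}(2)\cong\Im(\mathbb{H})$, I write
$$d_A\Phi=V^1\,\bfi+V^2\,\bfj+V^3\,\bfk,\qquad F_A=W^1\,\bfi+W^2\,\bfj+W^3\,\bfk,$$
for real-valued forms $V^a\in\Omega^1(M)$ and $W^a\in\Omega^2(M)$, so that $|d_A\Phi|^2=\sum_a|V^a|^2$ and $|F_A|^2=\sum_a|W^a|^2$. Since $\bar\Phi=-\Phi$ for imaginary quaternions, and since $\Re(e_ae_b)=-\delta_{ab}$ for any two basis vectors $e_a,e_b\in\{\bfi,\bfj,\bfk\}$ (the off-diagonal products $\bfi\bfj=\bfk$, etc., being purely imaginary), a direct expansion of $2\Re(d_A\bar\Phi\wedge F_A)$ collapses to the clean identity
$$Z(\Phi,A)=2\sum_{a=1}^{3}V^a\wedge W^a.$$

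From this representation the first bound follows by combining the triangle inequality, a Cauchy--Schwarz step, and the elementary pointwise inequality $|V\wedge W|\le|V||W|$ for a real $1$-form $V$ and a real $2$-form $W$:
$$|Z(\Phi,A)|\le 2\sum_a|V^a\wedge W^a|\le 2\sum_a|V^a|\,|W^a|\le 2\Bigl(\sum_a|V^a|^2\Bigr)^{1/2}\Bigl(\sum_a|W^a|^2\Bigr)^{1/2}=2|d_A\Phi||F_A|.$$
The middle real-forms inequality is itself verified by picking an orthonormal coframe $e^1,\dots,e^n$ at the given point with $V=|V|e^1$ and splitting $W=W_\parallel+W_\perp$ according to whether its monomials contain $e^1$; then $V\wedge W=|V|\,e^1\wedge W_\perp$, which gives $|V\wedge W|^2=|V|^2|W_\perp|^2\le|V|^2|W|^2$.

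I do not anticipate any serious obstacle; the only care required is in tracking the signs produced by $\bar\Phi=-\Phi$ and by the identification $\Re(uv)=-\langle u,v\rangle$ for $u,v\in\mathfrak{su}(2)$. The constant $2$ is sharp: equality is attained already when $d_A\Phi$ and $F_A$ point along a single direction of $\mathfrak{su}(2)$ and the $\mathfrak{su}(2)$-component of $F_A$ in that direction has no monomial containing the direction of $V$.
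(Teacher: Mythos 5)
Your proof is correct, and it takes a genuinely different route from the paper's. You decompose along an orthonormal basis of $\mathfrak{su}(2)\cong\Im(\mathbb{H})$, writing $d_A\Phi=\sum_a V^a e_a$ and $F_A=\sum_b W^b e_b$ with \emph{real-valued} form coefficients, and exploit the fact that $\Re(e_ae_b)=-\delta_{ab}$ kills all cross terms, so that $Z(\Phi,A)=2\sum_a V^a\wedge W^a$; the bound then follows from the triangle inequality, the scalar inequality $|V\wedge W|\le|V|\,|W|$ for a real $1$-form against a real $k$-form (your coframe argument for this is fine), and Cauchy--Schwarz in $\R^3$, with the norm identifications $|d_A\Phi|^2=\sum_a|V^a|^2$, $|F_A|^2=\sum_a|W^a|^2$ matching the paper's conventions. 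The paper instead decomposes along a spatial orthonormal frame at the point, so the coefficients $\alpha_a,\beta_{bc}$ are quaternion-valued; it then has to choose the frame adapted to $\alpha$, invoke the spectral theorem to orthogonalize the triple $\{\alpha_1,\alpha_2,\alpha_3\}$ in $\Im(\mathbb{H})$, and run a more delicate componentwise Cauchy--Schwarz. Your argument is shorter and more transparent, and it also explains structurally why only the weaker bound $|\Re(\alpha\wedge\beta)|\le|\alpha||\beta|$ (rather than $|\alpha\wedge\beta|\le|\alpha||\beta|$, which the paper's remark notes is false) can hold: the off-diagonal quaternion products $e_ae_b$, $a\neq b$, contribute only to the imaginary part, which is exactly what taking $\Re$ discards. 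The Young/AM--GM step for the second inequality is as in the paper (implicitly), and your closing remark on sharpness of the constant $2$ is consistent with both arguments.
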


\begin{remark}
    Given a 1-form $\alpha$ and a 2-form $\beta$ with values in $\mathfrak{su}(2)$,
    it is interesting to note that the inequality $|\alpha\wedge\beta|\le|\alpha||\beta|$
    is false in general. Rather, we will show that the weaker bound $|\Re(\alpha\wedge\beta)|\le|\alpha||\beta|$
    holds. Alternatively, this subtlety could be avoided by replacing the Euclidean norm on forms with the weaker comass norm, which would suffice to obtain the desired mass bounds in the $\epsilon\to 0$ limit.
\end{remark}

\begin{proof}
    Given $p\in M$, let $\alpha:=d_A\Phi(p)$ and $\beta:=F_A(p)$. We claim that
    \begin{equation}\label{claim.re}
    |\Re(\alpha\wedge\beta)|^2\le|\alpha|^2|\beta|^2.
    \end{equation}
    We fix an orthonormal basis $\{e_a\}_{a=1}^n$ at $p$
    and we denote
    $$\alpha_a:=\alpha[e_a],\quad\beta_{ab}:=\beta[e_a,e_b].$$
    Since $\alpha$ can be seen as a linear map with values into a 3-dimensional space,
    we can choose the basis in such a way that $\alpha_a=0$ for $a>3$.
    We compute that
    $$(\alpha\wedge\beta)[e_a,e_b,e_c]=\alpha_a\beta_{bc}-\alpha_b\beta_{ac}+\alpha_c\beta_{ab}.$$
    Hence, by Cauchy--Schwarz, we have
    $$|(\alpha\wedge\beta)[e_1,e_2,e_3]|^2\le|\alpha|^2\sum_{b<c\le3}|\beta_{bc}|^2.$$
    Moreover, when $b,c>3$, only the term $\alpha_a\beta_{bc}$ can be nonzero, and its squared norm equals the corresponding
    term $|\alpha_a|^2|\beta_{bc}|^2$ in the expansion of the right-hand side of \cref{claim.re}.
    
    Thus, it suffices to prove the bound
    $$\sum_{1\le a<b\le3} |\Re(\alpha_a\beta_{bc})-\Re(\alpha_b\beta_{ac})|^2\le\left(\sum_{a=1}^3|\alpha_a|^2\right)\left(\sum_{b=1}^3|\beta_{bc}|^2\right)$$
    for any fixed $c>3$.
    Setting $\gamma_a:=\beta_{ac}$, we are left to show the inequality
    $$\sum_{a<b}|\Re(\alpha_a\gamma_b)-\Re(\alpha_b\gamma_a)|^2\le\sum_{a,b}|\alpha_a|^2|\gamma_b|^2,$$
    where from now on we understand that indices vary only in $\{1,2,3\}$.
    
    Since the left-hand side can be rewritten as $\mz\sum_{a,b}|\Re(\alpha_a\gamma_b)-\Re(\alpha_b\gamma_a)|^2$,
    it is not affected by a change of the orthonormal basis $\{e_1,e_2,e_3\}$ (spanning the same three-dimensional subspace). By the spectral theorem,
    we can then assume that $\{\alpha_1,\alpha_2,\alpha_3\}$ is an orthogonal triple.
    Writing
    $$\alpha_a=\lambda_a \eta_a,$$
    with $\lambda_a\ge0$ and $\{\eta_1,\eta_2,\eta_3\}$ an orthonormal basis of $\Im(\mathbb{H})$, and decomposing each $\gamma_b$ as
    $$\gamma_b=:\sum_{a=1}^3\gamma_{a,b}\eta_a,$$
    so that $\gamma_{a,b}=\ang{\gamma_b,\eta_a}=\Re(\bar\eta_a\gamma_b)=-\Re(\eta_a\gamma_b)$, the claim becomes
    $$\sum_{a<b}|\lambda_a\gamma_{a,b}-\lambda_b\gamma_{b,a}|^2\le(\lambda_1^2+\lambda_2^2+\lambda_3^2)\sum_{a,b}|\gamma_{a,b}|^2.$$
    By Cauchy--Schwarz, the left-hand side is bounded by
    $$\sum_{a<b}(\lambda_a^2+\lambda_b^2)(|\gamma_{a,b}|^2+|\gamma_{b,a}|^2),$$
    which is clearly bounded by the right-hand side.
\end{proof}
In particular, the integral bound 
$$\|Z(\Phi,A)\|_{L^1(M)}\leq E_{\epsilon}(\Phi,A)$$
holds automatically; equivalently, under the identification \eqref{eq: def current Z}, we have 
$$\mathbb{M}(Z(\Phi, A)) = \|Z(\Phi,A)\|_{L^1(M)}\leq E_{\epsilon}(\Phi,A).$$

Next, we observe that $Z(\Phi,A)$ is an exact form: in particular, by combining \eqref{eq: d Re} with the Bianchi identity $d_AF_A=0,$ we see that 
\begin{equation}
    Z(\Phi,A)=2d(\beta(\Phi,A)),
\end{equation}
where
\begin{equation} \label{equation: definition of beta}
    \beta(\Phi,A):=\Re(\bar{\Phi}F_A).
\end{equation}
A central ingredient in the proof of Theorem \ref{thm: main theorem} is the simple observation that the integral of this two-form $\frac{1}{2\pi}\beta(\Phi,A)$ over $2$-cycles is approximately quantized where $|\Phi|\approx 1$. At the level of pointwise estimates, this can be proved by comparing $\beta(\Phi,A)$ with the pullback $\Phi^*(dA_{S^2})$ of the area element $dA_{S^2}$ in an arbitrary gauge as in \cite[Section II.5]{JaffeTaubes}, then applying the exterior derivative to arrive at the gauge-invariant identity
$$Z(\Phi,A)=\frac{1}{2}d\left(\Re(\bar{\Phi}\,d_A\Phi\wedge d_A\Phi)\right)\quad\text{where }|\Phi|\equiv 1.$$

For a more gauge-invariant 
approach, one can instead relate $\beta(\Phi,A)$ to the curvature two-form of a $\U(1)$-bundle over the region $\{\Phi\neq 0\}$. Indeed, on the Euclidean 2-plane bundle $L\to \{\Phi\neq 0\}$ given by
$$L_p:=\{\Phi(p)\}^{\perp}\subset \mathfrak{su}(2),$$
note that the connection one-form $A\in \Omega^1(M;\mathfrak{su}(2))$ induces a metric-compatible connection $\nabla^{\perp}$ on $L$ via
$$\nabla^{\perp}\Psi:=d_A\Psi-\left\langle d_A\Psi,\frac{\Phi}{|\Phi|}\right\rangle \frac{\Phi}{|\Phi|}$$
for all $\Psi\in \Gamma(L)$. The curvature $F_{\nabla^{\perp}}\in \Omega^2(\{\Phi\neq 0\}, \mathfrak{so}(L))$ of $\nabla^{\perp}$ is then encoded, up to a sign depending on the choice of orientation, in a closed real-valued two-form on $\{\Phi\neq 0\}$, which we denote by $\omega(\Phi,A)$. With this notation in place, we record the following.

\begin{lemma}\label{quant.lem}
    On the set $U:=\{\Phi\neq 0\},$ the curvature two-form $\omega=\omega(\Phi,A)$ for the bundle $L\to U$ is given by
    \begin{equation}\label{omega.char}
        |\Phi|\omega(\Phi,A)=2\beta(\Phi,A)-\frac{1}{2}|\Phi|^{-2}\Re(\bar{\Phi}\,d_A\Phi\wedge d_A\Phi).
    \end{equation}
\end{lemma}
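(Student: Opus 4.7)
The plan is to work locally and choose a unit section $\Psi \in \Gamma(L)$ near a given point, writing $\hat{\Phi} := \Phi/|\Phi|$. I fix an orientation on $L$ so that $\{\Psi, \hat{\Phi}\Psi\}$ (using quaternion multiplication) is a positively oriented orthonormal frame; note that $\hat{\Phi}\Psi$ has unit length and lies in $L$, being perpendicular to $\Psi$. The curvature two-form $\omega$ is then characterized by $F_{\nabla^\perp}(\Psi) = \omega \otimes (\hat{\Phi}\Psi)$, so it suffices to compute $F_{\nabla^\perp}(\Psi)$ intrinsically and pair it against $\hat{\Phi}\Psi$ under the inner product on $L$.

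A key preliminary observation is that on $L$-valued forms the twisted exterior derivative $d_{\nabla^\perp}$ coincides with $\pi_L \circ d_A$, where $\pi_L$ is the orthogonal projection onto $L$. Applying this twice to $\Psi$, using $d_A d_A\Psi = [F_A, \Psi]$ together with $\langle d_A\Psi, \hat{\Phi}\rangle = -\langle \Psi, d_A\hat{\Phi}\rangle$ (a consequence of $\langle \Psi, \hat{\Phi}\rangle = 0$ and metric compatibility), a short computation gives
$$F_{\nabla^\perp}(\Psi) = \pi_L([F_A, \Psi]) - \langle \Psi, d_A\hat{\Phi}\rangle \wedge d_A\hat{\Phi}.$$
Pairing against $\hat{\Phi}\Psi$ and invoking the pointwise quaternion identities $\Psi\hat{\Phi}\Psi = \hat{\Phi}$ and $\Psi^2 = -1$ (valid for orthogonal imaginary unit quaternions), together with cyclic invariance of $\Re$, the first summand collapses to $-2\Re(\hat{\Phi} F_A) = 2\langle \hat{\Phi}, F_A\rangle = 2\beta(\Phi,A)/|\Phi|$.

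For the second summand, I would decompose $d_A\Phi = T + d|\Phi|\,\hat{\Phi}$ into its $L$-tangential and $\hat{\Phi}$-normal parts and observe that $d_A\hat{\Phi} = T/|\Phi|$ follows by differentiating $\Phi = |\Phi|\hat{\Phi}$. Writing $T = t_1 \Psi + t_2\, \hat{\Phi}\Psi$ in the local frame, the second pairing reduces to $|\Phi|^{-2}\, t_1 \wedge t_2$. Separately, the identity $\Psi\hat{\Phi}\Psi = \hat{\Phi}$ yields $T \wedge T = 2\, t_1 \wedge t_2\, \hat{\Phi}$, while the cross terms $T\wedge d|\Phi|\,\hat{\Phi} + d|\Phi|\,\hat{\Phi}\wedge T$ take values in the $\Psi,\hat{\Phi}\Psi$ directions, and hence vanish after multiplying by $\bar{\Phi}$ and taking the real part; the result is $\Re(\bar{\Phi}\, d_A\Phi \wedge d_A\Phi) = 2|\Phi|\, t_1 \wedge t_2$. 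Combining the two contributions to $\omega$ and multiplying by $|\Phi|$ yields the stated formula.

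The main obstacle is purely algebraic bookkeeping: tracking the non-commutative quaternion multiplications in the correct order and carefully accounting for all cross terms in the expansion of $d_A\Phi \wedge d_A\Phi$. Once the two pointwise identities $\Psi\hat{\Phi}\Psi = \hat{\Phi}$ and $T \wedge T = 2 t_1 \wedge t_2\, \hat{\Phi}$ are in hand, together with the observation $d_A\hat{\Phi} = T/|\Phi|$, the remaining manipulations are essentially mechanical.
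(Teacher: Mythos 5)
Your argument is correct in substance but takes a genuinely different route from the paper. The paper first reduces \eqref{omega.char} to the unit-length case $\phi=\Phi/|\Phi|$, then passes to a local gauge in which $\phi\equiv\mathbf{i}$ and computes in components $A=A^1\mathbf{i}+A^2\mathbf{j}+A^3\mathbf{k}$: there $\omega=2dA^1=2d\Re(\bar\phi A)$, and the identity \eqref{equation: dRe} converts this into $2\beta(\phi,A)-\tfrac12\Re(\bar\phi\,d_A\phi\wedge d_A\phi)$. You instead compute $F_{\nabla^\perp}$ intrinsically: choosing a local unit section $\Psi$ of $L$ and the oriented frame $\{\Psi,\hat\Phi\Psi\}$ (which matches the paper's orientation convention, since $\{\mathbf{j},\mathbf{k}\}$ is the positive frame when $\hat\Phi=\mathbf{i}$, $\Psi=\mathbf{j}$, so the two definitions of $\omega$ agree), you use $d_{\nabla^\perp}=\pi_L\circ d_A$, $d_Ad_A\Psi=[F_A,\Psi]$ and metric compatibility to obtain $F_{\nabla^\perp}(\Psi)=\pi_L([F_A,\Psi])-\langle\Psi,d_A\hat\Phi\rangle\wedge d_A\hat\Phi$, and then pair with $\hat\Phi\Psi$ via the quaternion identities $\Psi\hat\Phi\Psi=\hat\Phi$, $\Psi^2=-1$. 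All the key identities you invoke check out ($d_A\hat\Phi=T/|\Phi|$, $T\wedge T=2\,t_1\wedge t_2\,\hat\Phi$, vanishing of the cross terms, and $\Re(\bar\Phi\,d_A\Phi\wedge d_A\Phi)=2|\Phi|\,t_1\wedge t_2$). Your route is gauge-free except for the choice of the frame of $L$, and it also makes explicit the homogeneity step (passing from $\phi$ to $\Phi$ via $d_A\Phi=T+d|\Phi|\,\hat\Phi$) which the paper only asserts as an equivalence; the paper's gauge computation is somewhat shorter and needs no frame of $L$, only the normal form $\phi\equiv\mathbf{i}$.

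One sign needs care. With the second summand of $F_{\nabla^\perp}(\Psi)$ written as $-\langle\Psi,d_A\hat\Phi\rangle\wedge d_A\hat\Phi$, its pairing against $\hat\Phi\Psi$ is $-|\Phi|^{-2}\,t_1\wedge t_2$, not $+|\Phi|^{-2}\,t_1\wedge t_2$ as stated in your sketch; only with the minus sign do the two contributions, $2|\Phi|^{-1}\beta(\Phi,A)$ and $-\tfrac12|\Phi|^{-3}\Re(\bar\Phi\,d_A\Phi\wedge d_A\Phi)$, combine to give \eqref{omega.char} after multiplying by $|\Phi|$. This is a bookkeeping slip rather than a structural gap: in the gauge $\hat\Phi=\mathbf{i}$, $\Psi=\mathbf{j}$ one has $t_1=2|\Phi|A^3$, $t_2=-2|\Phi|A^2$, and the corrected combination reproduces the paper's $\omega=2dA^1$, so the approach as a whole is sound.
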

\begin{proof}
    On $U$, first define $\phi:=\frac{\Phi}{|\Phi|}$, and observe that the desired identity \eqref{omega.char} is equivalent to
    $$\omega(\Phi,A)=2\beta(\phi,A)-\frac{1}{2}\Re(\bar{\phi}\,d_A\phi\wedge d_A\phi).$$
    Next, since this is a local computation, we restrict our attention to a small ball $B\subset U$ around some arbitrary point in $U$, and make a change of gauge with respect to which $\phi\equiv \bfi$ on $B$. 

    In such a gauge, write $A=A^1\bfi+A^2\bfj+A^3\bfk$ for real-valued one-forms $A^1,A^2,A^3\in \Omega^1(B)$, and observe that
    $$d_A\phi=d_A\bfi=[A,\bfi]=-2A^2\bfk+2A^3\bfj$$
    and, by similar computations for $d_A\bfj$ and $d_A\bfk$, we see that
    $$\nabla^{\perp}\bfj=2A^1\bfk,\quad\nabla^{\perp}\bfk=-2A^1\bfj.$$
    In particular, fixing the orientation so that $\{\bfj, \bfk\}$ is positively oriented in this gauge, it follows that
    $$\omega(\Phi,A)=2dA^1=2d\Re(\bar{\phi}A).$$
    Applying \eqref{equation: dRe} and noting that $d\phi=0$ in this gauge, we then deduce that
    \begin{align*}
        d\Re(\bar{\phi}A)&=\beta(\phi,A)-\Re(\bar{\phi}A\wedge A)\\
        &=\beta(\phi,A)+\Re(\bfi A\wedge A)\\
        &=\beta(\phi,A)-2A^2\wedge A^3.
    \end{align*}
    On the other hand, we see that
    \begin{align*}
        \Re(\bar{\phi}\,d_A\phi\wedge d_A\phi)&=-\Re(\bfi(-2A^2\bfk+2A^3\bfj)\wedge (-2A^2\bfk+2A^3\bfj))\\
        &=8A^2\wedge A^3,
    \end{align*}
    so that 
    $$\omega(\Phi,A)=2d\Re(\bar{\phi} A)=2\beta(\phi,A)-\frac{1}{2}\Re(\bar{\phi}\,d_A\phi\wedge d_A\phi),$$
    as claimed.
\end{proof}

Now, if $S\subset U$ is any closed oriented surface in $U=\{\Phi\neq 0\}$, we observe that
$$\int_S\omega(\Phi,A)=2\pi e(\iota^*L),$$
where $\iota^*L\to S$ is the pullback bundle of $L$ under the inclusion $\iota: S\to U$, and $e(\iota^*L)$ is its Euler number. To see that this integral is in fact an integer multiple of $4\pi$, note that $L$ can be characterized as the pullback bundle
$$L=\phi^*(TS^2)$$
of the tangent bundle $TS^2$ under $\phi=\frac{\Phi}{|\Phi|}: U\to S^2\subset\mathfrak{su}(2)$
although, in the convention of Lemma \ref{quant.lem}, $TS^2$ is endowed with the \emph{opposite} of the canonical orientation. In particular, since $e(TS^2)=2$ for the canonical orientation, it follows from the naturality of Euler classes under pullback that $$e(\iota^*L)=-e([\phi \circ \iota]^*TS^2)=- 2\deg(\phi\circ \iota),$$
and therefore
\begin{equation}\label{beta.quant}
    \int_S\left[2|\Phi|^{-1}\beta(\Phi,A)-\frac{1}{2}|\Phi|^{-3}\Re(\bar{\Phi}\,d_A\Phi\wedge d_A\Phi)\right]=\int_S\omega(\Phi,A)=-4\pi \deg(\phi \circ \iota)
\end{equation}
for any $\Phi:M\to \mathfrak{su}(2)$ and any surface $S\subset\{\Phi\neq 0\}$.

\section{Liminf Inequality} 

Before beginning the proof of Theorem \ref{thm: main theorem}, we briefly recall some relevant terminology from geometric measure theory; see, e.g., \cite{SimonGMT} or \cite{Federer} for details.

First, we denote by $\mathcal{D}_k(M)$ the space of $k$-currents on $M$, i.e., the space of continuous linear functionals on $\Omega^k_c(M)$ with respect to the $C^{\infty}_c$ topology, and we follow the Euclidean convention that the \emph{mass} $\mathbb{M}(T)$ of a current $T\in \mathcal{D}_k(M)$ is given by
$$\mathbb{M}(T):=\sup\{\langle T,\omega\rangle \mid \omega\in \Omega^k_c(M)\text{ with }|\omega|\leq 1\},$$
while the \emph{boundary} $\partial T\in \mathcal{D}_{k-1}(M)$ is given by
$$\langle \partial T,\zeta\rangle:=\langle T,d\zeta\rangle.$$

As discussed in the previous section, the three-forms $Z(\Phi,A)$ can be identified with currents $[Z(\Phi,A)]\in\mathcal{D}_{n-3}(M)$ via integration, satisfying
$$\mathbb{M}([Z(\Phi,A)])\leq E_{\epsilon}(\Phi,A)$$
and
$$\langle \partial [Z(\Phi,A)], \zeta\rangle=\int_M Z(\Phi,A)\wedge d\zeta=\int_M d(Z(\Phi,A)\wedge \zeta)=0$$
for any $\zeta\in \Omega^2(M)$ supported in the interior of $M$. Moreover, for any $\zeta\in \Omega^{n-3}(M)$, it follows from \cref{trivial.ub} that
$$\langle [Z(\Phi,A)],\zeta\rangle \leq \int_M\left(\frac{1}{\epsilon}|d_A\Phi|^2+\epsilon|F_A|^2\right)|\zeta|.$$
By a standard application of the Banach--Alaoglu theorem (cf.\ \cite[Lemma 26.14]{SimonGMT}), these observations together immediately yield the following preliminary result.

\begin{lemma}\label{curr.lim}
Under the hypotheses of Theorem \ref{thm: main theorem}, there exists an $(n-3)$-current $T\in \mathcal{D}_{n-3}(M)$ such that, along a subsequence,
$$[Z(\Phi_j,A_j)]\rightharpoonup^* 4\pi T$$
as $(n-3)$-currents, and $T$ satifies
$$\spt(\partial T)\subset \partial M$$
and
$$4\pi |T|\leq \lim (\epsilon_j^{-1}|d_{A_j}\Phi_j|^2+\epsilon_j|F_{A_j}|^2)\,d\vol_g;$$
in particular, $4\pi \mathbb{M}(T)\leq \liminf_{j\to\infty}E_{\epsilon_j}(\Phi_j,A_j).$
\end{lemma}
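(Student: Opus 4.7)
The plan is to extract the desired limits by two successive applications of Banach--Alaoglu, and then to verify each of the three claims (convergence of currents, support of the boundary, and the measure inequality) by direct computation using the pointwise bound from Proposition \ref{trivial.ub}. First I would pass to a subsequence along which $E_{\epsilon_j}(\Phi_j, A_j)$ converges to $L := \liminf_{j\to\infty} E_{\epsilon_j}(\Phi_j, A_j) < \infty$. Proposition \ref{trivial.ub} then gives the uniform bound $\mathbb{M}([Z(\Phi_j, A_j)]) \leq E_{\epsilon_j}(\Phi_j, A_j)$, so the standard compactness result for currents of locally bounded mass (cf.\ \cite[Lemma 26.14]{SimonGMT}) yields a further subsequence along which $\frac{1}{4\pi}[Z(\Phi_j, A_j)] \rightharpoonup^* T$ in $\mathcal{D}_{n-3}(M)$. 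The positive measures $\mu_j := e_{\epsilon_j}(\Phi_j, A_j) \, \dvol_g$ have uniformly bounded total mass on the compact space $M$, and a further extraction via Banach--Alaoglu in $C^0(M)^*$ yields $\mu_j \rightharpoonup^* \mu$.

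For the boundary condition, I would use the fact that $Z(\Phi_j, A_j) = 2 d \beta(\Phi_j, A_j)$ is exact and hence closed. For any smooth $(n-4)$-form $\zeta$ compactly supported in the interior of $M$, Stokes's theorem then yields
\begin{equation*}
\langle \partial [Z(\Phi_j, A_j)], \zeta\rangle = \int_M Z(\Phi_j, A_j) \wedge d\zeta = -\int_M d\bigl(Z(\Phi_j, A_j) \wedge \zeta\bigr) = 0,
\end{equation*}
so passing to the limit gives $\langle \partial T, \zeta\rangle = 0$ on all such $\zeta$, i.e.\ $\spt(\partial T) \subset \partial M$.

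For the central inequality $4\pi|T| \leq \mu$, I would fix an open set $U \subset M$ and a smooth $(n-3)$-form $\omega$ with $\spt \omega \subset U$ and $|\omega| \leq 1$. Proposition \ref{trivial.ub} combined with the pointwise bound $|Z \wedge \omega| \leq |Z|\,|\omega|$ gives
\begin{equation*}
\langle [Z(\Phi_j, A_j)], \omega\rangle \leq \int_M |Z(\Phi_j, A_j)|\,|\omega| \leq \int_M |\omega| \, d\mu_j,
\end{equation*}
and since $|\omega|$ is continuous and compactly supported in $U$, the weak-$*$ convergence $\mu_j \rightharpoonup^* \mu$ lets us pass to the limit to obtain $\langle 4\pi T, \omega\rangle \leq \int_M |\omega|\, d\mu \leq \mu(U)$. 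Taking the supremum over admissible $\omega$ gives $4\pi|T|(U) \leq \mu(U)$ on every open $U$, hence $4\pi|T| \leq \mu$ as Radon measures; pairing both sides with the constant function $1$ yields $4\pi \mathbb{M}(T) \leq \mu(M) = \lim_j \mu_j(M) = L$.

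The argument is essentially a routine exercise in weak-$*$ compactness and the GMT definitions, so there is no single hard step; the only mild point that requires care is using continuity of $|\omega|$ to transfer the pointwise bound from Proposition \ref{trivial.ub} through the weak-$*$ limit of the $\mu_j$, and an appeal to the variational characterization of $|T|$ via admissible test forms to recover $4\pi |T| \leq \mu$ as measures.
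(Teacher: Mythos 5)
Your argument is correct and is essentially the paper's own proof: the paper records exactly your three ingredients (the mass bound from Proposition \ref{trivial.ub}, the vanishing of $\partial[Z(\Phi_j,A_j)]$ on interior test forms since $Z=2d\beta$ is closed, and the pointwise-weighted estimate $\langle[Z],\zeta\rangle\le\int_M e_{\epsilon}|\zeta|$) and then invokes the standard Banach--Alaoglu/compactness argument that you have written out in detail. The only cosmetic difference is that you make explicit the weak-$*$ extraction of $\mu$ and the open-set comparison $4\pi|T|(U)\le\mu(U)$, which the paper leaves as ``standard.''
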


Note that, away from $\de M$, the current $T$ in Lemma \ref{curr.lim} is automatically \emph{normal} in the sense of \cite{Federer}: that is, $T$ and $\partial T$ both have finite mass in every compact subset of the interior of $M$. To prove Theorem \ref{thm: main theorem}, what remains is to show that the limit current $T$ in Lemma \ref{curr.lim} is an \emph{integral} $(n-3)$-current in $M\setminus\de M$, that is, 
$$T(\omega)=\int_{\Sigma}\theta(x)\langle \omega(x),\xi(x)\rangle \, d\mathcal{H}^{n-3}(x)$$
for some $(n-3)$-rectifiable set $\Sigma\subset M$, with $\theta: \Sigma\to \mathbb{Z}$ an $\mathcal{H}^{n-3}$-measurable weight and $\xi(x)$ a volume form for $T_x\Sigma$ at $\mathcal{H}^{n-3}$-a.e.\ $x\in \Sigma$. 

Further, we observe that we can assume
\begin{equation}\label{mod.at.most.1}
    |\Phi_j|\le1
\end{equation}
in the sequel: indeed, letting $\tilde\Phi_j:=\Phi_j$ on $\{|\Phi_j|\le1\}$ and
$\tilde\Phi_j:=\frac{\Phi_j}{|\Phi_j|}$ on $\{|\Phi_j|>1\}$, we have
$$\|\tilde\Phi_j-\Phi_j\|_{L^2(M)}\le\||\Phi_j|-1\|_{L^2(M)}=o(\epsilon_j^{1/2}),$$
giving in particular
$$\|\beta(\tilde\Phi_j,A_j)-\beta(\Phi_j,A_j)\|_{L^1(M)}\le o(\epsilon_j^{1/2})\|F_{A_j}\|_{L^2(M)}=o(1),$$
and thus
\begin{equation}\label{same.lim}
    \lim_{j\to\infty}Z(\tilde\Phi_j,A_j)=\lim_{j\to\infty}Z(\Phi_j,A_j).
\end{equation}
Since $\langle\frac{\Phi_j}{|\Phi_j|},d_{A_j}\frac{\Phi_j}{|\Phi_j|}\rangle=0$ we also have
$$E_{\epsilon_j}(\tilde\Phi_j,A_j)\le E_{\epsilon_j}(\Phi_j,A_j),$$
and hence we can guarantee \eqref{mod.at.most.1} by replacing $\Phi_j$ with $\tilde\Phi_j$
(which can be slightly perturbed to a smooth function, although Lipschitz regularity
is certainly enough for the computations in this section).

The remainder of this section is devoted to the proof of this integrality result, completing the proof of Theorem \ref{thm: main theorem}. We start by reducing the general situtation to the three-dimensional case by slicing, and then we prove it in three dimensions.

\subsection{Reduction to dimension 3}\hfill\\
Since integrality of the current $T$ in Lemma \ref{curr.lim} is a local property,
by pulling back sections and connections via arbitrary local parametrizations,
we can replace $M$ with the cube $(-2,2)^n$. Writing any point $x\in(-2,2)^n$
as $x=(y,z)$, with $y\in(-2,2)^{n-3}$ and $z\in(-2,2)^3$,
let
$$\pi:(-2,2)^n\to(-2,2)^{n-3},\quad \pi(y,z):=y.$$
Given $y\in (-2,2)^{n-3}$ and a normal $(n-3)$-current $S$ in $(-2,2)^n$, we let
$S^y=\langle S,\pi,y\rangle$ denote its slice relative to the map $\pi$ and the value $y$ (in the sense of \cite[Section 4.3]{Federer}), so that $S^y$ is defined for a.e.\ $y$ and is a normal 0-current in $(-2,2)^3$. Letting $T_j=\frac{1}{4\pi}[Z(\Phi_j,A_j)]$, in the setting of Lemma \ref{curr.lim}, we observe next that the operation of taking slices commutes with the weak convergence $T_j\rightharpoonup^*T$.
\begin{lemma}\label{slice.conv}
Under the assumptions of Lemma \ref{curr.lim} with $M=(-2,2)^n$, after passing to a further subsequence, we have the weak convergence
$$T_j^y\rightharpoonup^* T^y$$
for almost every $y\in (-2,2)^{n-3}.$
\end{lemma}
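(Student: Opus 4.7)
The plan is to reduce to BV compactness for real-valued functions via Federer's slicing formula. For each fixed $\varphi \in C^\infty_c((-2,2)^3)$, I would show that the real-valued functions $y \mapsto g_j^\varphi(y) := T_j^y(\varphi)$ are uniformly bounded in $\mathrm{BV}((-2,2)^{n-3})$. BV compactness, combined with a diagonal extraction over a countable dense family of test functions, then yields pointwise a.e.\ convergence along a subsequence; a standard mass-bound argument upgrades this to weak-* convergence of the full slice currents.

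The key ingredient for the BV estimate is that $T_j$ has zero boundary on the open cube: indeed, $Z(\Phi_j, A_j) = 2d\beta(\Phi_j, A_j)$ is exact, so $T_j(d\eta) = 0$ for every smooth compactly supported form $\eta$. Given $\psi \in C^\infty_c((-2,2)^{n-3})$, $\varphi \in C^\infty_c((-2,2)^3)$ and an index $a \in \{1, \dots, n-3\}$, I would apply $T_j$ to the exterior derivative of the $(n-4)$-form
\begin{equation*}
\omega := \psi(y)\,\varphi(z) \, dy_1 \wedge \cdots \widehat{dy_a} \cdots \wedge dy_{n-3}.
\end{equation*}
A direct computation gives
\begin{equation*}
d\omega = (-1)^{a-1} \partial_{y_a}\psi \cdot \varphi \, dy_1 \wedge \cdots \wedge dy_{n-3} + \psi \cdot d_z\varphi \wedge dy_1 \wedge \cdots \widehat{dy_a} \cdots \wedge dy_{n-3}.
\end{equation*}
Using $T_j(d\omega) = 0$ together with the slicing identity $T_j(\psi\,\varphi \, dy_1 \wedge \cdots \wedge dy_{n-3}) = \int \psi(y)\, g_j^\varphi(y) \, dy$, one obtains
\begin{equation*}
\Bigl| \int \partial_{y_a}\psi(y) \, g_j^\varphi(y) \, dy \Bigr| \leq \mathbb{M}(T_j) \, \|\psi\|_\infty \, \|d\varphi\|_\infty,
\end{equation*}
which combined with $\|g_j^\varphi\|_{L^1} \leq \|\varphi\|_\infty \int \mathbb{M}(T_j^y) \, dy \leq C \|\varphi\|_\infty$ (from the standard slicing inequality $\int \mathbb{M}(T_j^y) \, dy \leq \mathbb{M}(T_j)$) yields the desired uniform BV bound.

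By BV compactness, $g_j^\varphi$ converges along a subsequence in $L^1$ and a.e.\ to some limit, which must coincide with $g^\varphi(y) := T^y(\varphi)$, as follows from applying the same slicing formula to $T$ and using $T_j \rightharpoonup^* T$. Fixing a countable $C^0$-dense family $\{\varphi_k\} \subset C^\infty_c((-2,2)^3)$ and diagonalizing produces a single subsequence along which $g_j^{\varphi_k}(y) \to g^{\varphi_k}(y)$ for every $k$ and a.e.\ $y$. Finally, a further subsequence extraction, exploiting the uniform integral bound on $y \mapsto \mathbb{M}(T_j^y)$, should ensure that $\sup_j \mathbb{M}(T_j^y) < \infty$ on a set of full measure; this uniform mass bound together with $C^0$-density of $\{\varphi_k\}$ then upgrades pointwise convergence on $\{\varphi_k\}$ to weak-* convergence $T_j^y \rightharpoonup^* T^y$ of the slice currents for a.e.\ $y$.

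The hardest step will be this last subsequence extraction: guaranteeing $\sup_j \mathbb{M}(T_j^y) < \infty$ on a full-measure set from an only $L^1$-integrated mass bound. This is a somewhat delicate but standard point in slicing theory, following from refinements of Federer's slicing theorems.
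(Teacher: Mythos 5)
Your BV scheme is sound up to the last step: the exactness of $Z(\Phi_j,A_j)$ does give $\partial T_j=0$ on the open cube, the resulting bound $\bigl|\int \partial_{y_a}\psi\, g_j^\varphi\bigr|\le C\|\psi\|_\infty\|d\varphi\|_\infty$ is correct, and BV compactness plus the slicing identity for the normal limit current $T$ identifies the a.e.\ limit of $g_j^\varphi$ with $T^y(\varphi)$ along a diagonal subsequence. The genuine gap is the final upgrade. From the integrated bound $\int \mathbb{M}(T_j^y)\,dy\le \mathbb{M}(T_j)\le C$ alone one \emph{cannot} extract a subsequence with $\sup_j\mathbb{M}(T_j^y)<\infty$ (or even $\limsup_j<\infty$) for a.e.\ $y$: this is false as a measure-theoretic statement, not a standard refinement of slicing theory. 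For a counterexample, work on $[0,1]$ and for each $m$ choose sets $A_{j,m}$, $j\in\N$, mutually independent (as events for Lebesgue measure) with $|A_{j,m}|=2^{-m}m^{-2}$; set $f_j:=\sum_m 2^m\mathbf{1}_{A_{j,m}}$. Then $\int f_j\le\sum_m m^{-2}$, but for \emph{every} subsequence $(j_k)$ and every $m$ the union $\bigcup_k A_{j_k,m}$ has full measure (infinite product of $1-2^{-m}m^{-2}$ vanishes), so $\sup_k f_{j_k}=\infty$ a.e. Fatou only yields $\liminf_j\mathbb{M}(T_j^y)<\infty$ a.e., which is not enough to pass from your countable family $\{\varphi_k\}$ to all of $C^0_c$ along the full subsequence, since the error term $|T_j^y(\varphi-\varphi_k)|\le\mathbb{M}(T_j^y)\|\varphi-\varphi_k\|_\infty$ needs a bound uniform in $j$.

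The way to close this, and what the paper actually does, is to control the slices in a norm that handles all test functions at once, namely the flat norm of the \emph{difference}. Multiplying by a cutoff $\chi\in C^1_c((-2,2)^n)$, the currents $\tilde T_j:=\chi T_j$ are normal with equibounded mass and boundary mass (your observation $\partial T_j=0$ on the open cube is exactly what controls $\partial\tilde T_j$), so by the Federer--Fleming compactness theorem for normal currents the weak-$*$ convergence $\tilde T_j\rightharpoonup^*\chi T$ upgrades to $\mathbb{F}(\tilde T_j-\chi T)\to0$. The slicing inequality $\int\mathbb{F}(S^y)\,dy\le\mathbb{F}(S)$ applied to $S=\tilde T_j-\chi T$ then gives, along a subsequence, $\mathbb{F}(\tilde T_j^y-(\chi T)^y)\to0$ for a.e.\ $y$, and flat convergence of $0$-currents implies weak-$*$ convergence against every fixed smooth test function (only its $C^1$ norm enters), with no slice-mass bound required; letting $\chi$ run over a countable exhausting family finishes the proof. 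If you prefer to keep your per-test-function estimates, the viable variant is to convert them into a bound on $\int\mathbb{F}(T_j^y-T^y)\,dy$ rather than into pointwise mass bounds; some flat-norm control of $T_j-T$ (not merely of each $T_j^y$ separately) is what the argument really needs.
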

\begin{proof}
Let $\chi\in C^1_c(M)$, where now $M=(-2,2)^n$, and let
$\tilde T_j:=\chi T_j$, which we can view as a normal current in $\R^n$.
Since $\tilde T_j$ and $\de\tilde T_j$ have equibounded mass, classical compactness results from geometric measure theory (see \cite[Theorem 4.2.17]{Federer}) imply that the weak convergence $\tilde T_j\rightharpoonup^*\tilde T:=\chi T$ can be upgraded to convergence in flat norm
$$\lim_{j\to\infty}\mathbb{F}(\tilde T_j-\tilde T)=0,$$
where we recall that
\begin{align*}
    \mathbb{F}(S)&:=\sup\{S(\omega)\mid \omega \in \Omega^m_c(\R^n)\text{ with }\max\{\|\omega\|_{C^0},\|d\omega\|_{C^0}\}\le1\}\\
    &\phantom{:}=\inf\{\mathbb{M}(S-\partial C)+\mathbb{M}(C)\mid C\in \mathcal{D}_{n-2}(\R^n)\}.
\end{align*}
Now, by standard estimates for the slicing operation (see \cite[Section 4.3.1]{Federer}) we have
$$\int_{y\in (-2,2)^{n-3}}\mathbb{F}(S^y)\leq \mathbb{F}(S),$$
so that
$$\lim_{j\to\infty}\int_{y\in (-2,2)^{n-3}}\mathbb{F}(\tilde T_j^y-\tilde T^y)=0.$$
Thus, up to a further subsequence, for almost every $y\in (-2,2)^{n-3}$ we have
$$\tilde T_j\rightharpoonup^*\tilde T.$$
Since $\chi$ was arbitrary, a standard argument implies the existence of a further subsequence such that
$$T_j^y\rightharpoonup^* T^y$$
for almost every $y\in (-2,2)^{n-3}$.
\end{proof}

To reduce the integrality question to the three-dimensional case, we make essential use of the following result, due in various forms to White, Jerrard, and Ambrosio--Kirchheim. 

\begin{proposition}\label{slice.red}\cite{White, Jerrard.slices, AK}
    A normal $k$-current $S$ in $\mathbb{R}^n$ is integral if and only if, for every projection $P$ onto a $k$-dimensional coordinate subspace of $\mathbb{R}^n$, the slices $\langle S,P,y\rangle$ with respect to the projection $\pi: \mathbb{R}^n\to P$ are integral $0$-currents for almost every $y\in \mathbb{R}^k$.
\end{proposition}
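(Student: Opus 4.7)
The plan is to prove the two implications separately. The ``only if'' direction is immediate from Federer's slicing theorem: if $S$ is integer rectifiable, then for any Lipschitz map $\pi:\mathbb{R}^n\to\mathbb{R}^k$ the slices $\langle S,\pi,y\rangle$ are integer $0$-currents for a.e.\ $y$, identified via the coarea formula as $\sum_i \theta(x_i)\,\delta_{x_i}$ with $\{x_i\} = \pi^{-1}(y)\cap\spt(S)$. Coordinate projections are in particular Lipschitz, so this direction is automatic.

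For the converse, the plan is a blow-up and density argument. Let $S$ be a normal $k$-current whose slices by every coordinate $k$-projection $P$ are a.e.\ integer $0$-currents. First, the coarea formula applied to each $P$ gives a lower bound on $\|S\|$ in terms of the slice masses, which (combined with the integer structure forcing each nonzero Dirac in a slice to have mass at least one) yields a positive lower $k$-density for $\|S\|$ at $\|S\|$-a.e.\ point $x_0$. Combined with the finite upper density automatically available for normal $k$-currents, Preiss's theorem gives that $\|S\|$ is $k$-rectifiable, with unique tangent $k$-plane $V = T_{x_0}\|S\|$ at $\|S\|$-a.e.\ $x_0$. A blow-up $S_r := (\eta_{x_0,r})_\ast S$ with $\eta_{x_0,r}(x) := (x-x_0)/r$ then produces, along a subsequence, a tangent normal current of the form $S_\infty = m\,[V]$ for some $m \in \mathbb{R}$.

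The crux is to show $m \in \mathbb{Z}$, and this is where the coordinate-projection hypothesis is essential. By stability of slicing under flat convergence of normal currents, the integer-slice property descends from $S$ to $S_\infty$ for every coordinate $k$-projection. Choosing a coordinate $k$-projection $P_0$ that restricts to an isomorphism on the $k$-plane $V$ (always possible for any $k$-plane, by general position with some coordinate subspace), we obtain $\langle S_\infty, P_0, y\rangle = m\,\delta_{V \cap P_0^{-1}(y)}$ up to a constant Jacobian factor, and the integrality of this slice forces $m \in \mathbb{Z}$. This yields integer $k$-density $\theta \in \mathbb{Z}$ at $\|S\|$-a.e.\ point, completing the proof of integer rectifiability. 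I expect the main obstacle to be the careful passage from the integer-slice hypothesis on $S$ to the same property on the blow-up limit $S_\infty$: this requires the flat convergence of normal currents together with stability of slicing under such convergence, which is the technical heart of the White/Jerrard/Ambrosio--Kirchheim theorem and cannot be bypassed by smoother approximation arguments alone.
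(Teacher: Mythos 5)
The paper does not prove this proposition at all: it is quoted as a known theorem of White, Jerrard, and Ambrosio--Kirchheim \cite{White, Jerrard.slices, AK}, so your attempt is really being measured against those papers. Your ``only if'' direction is indeed the standard consequence of Federer's slicing theory and is fine. The ``if'' direction, however, has a genuine gap at its central step. First, the passage from ``a.e.\ slice is an integral $0$-current'' to a positive lower $k$-density for $\|S\|$ at $\|S\|$-a.e.\ point is asserted, not proved: the coarea/slicing inequality only bounds $\|S\|(B_r(x_0))$ from below by the integral of the masses of the \emph{restricted} slices, and you have no control on the measure of the set of $y$ for which $\langle S,P,y\rangle\mres B_r(x_0)\neq 0$; obtaining such a lower bound is in fact one of the main difficulties of the theorem, not a preliminary observation. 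Second, even granting density bounds, the rectifiability step is wrong as stated: Preiss's theorem requires the $k$-density to \emph{exist} $\|S\|$-a.e., and the weaker information $0<\Theta^k_*\le\Theta^{*k}<\infty$ does not imply rectifiability (the four-corner Cantor set gives a purely unrectifiable measure with positive and finite upper and lower $1$-densities). The auxiliary claim that normal $k$-currents ``automatically'' have finite upper $k$-density $\|S\|$-a.e.\ is likewise invoked without justification.

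Beyond that, the blow-up step presupposes facts you have not established: boundedness of mass and boundary mass of the rescalings (which needs density information on $\|\partial S\|$ at $\|S\|$-a.e.\ point), the fact that the orienting $k$-vector of $S$ aligns with the tangent plane of $\|S\|$, and the stability of slicing and of $0$-dimensional integrality under the flat convergence of the rescaled currents. You candidly note that this last point ``is the technical heart \dots and cannot be bypassed,'' which amounts to conceding that the proof is not actually given. For comparison, none of the cited proofs follows the density-plus-Preiss route: White argues via the deformation theorem and polyhedral approximation of flat chains, Jerrard via BV-type estimates for the slice map $y\mapsto\langle S,\pi,y\rangle$ viewed as a map into a metric space of $0$-currents, and Ambrosio--Kirchheim via their metric-space BV/slicing machinery. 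So while your outline identifies the right ingredients (slices, blow-ups, closure of integral $0$-currents), it does not constitute a proof, and in the context of this paper the correct move is exactly what the authors do: quote the result.
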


 To complete the proof that $T$ is an integral $(n-3)$-current in the situation of Lemma \ref{curr.lim} with $M=(-2,2)^n$, it therefore suffices to show that the slices $T^y$ are integral $0$-currents in $(-2,2)^3$ for almost every $y$. On the other hand, letting $\Phi_j^y(z):=\Phi_j(y,z)$
and $A_j^y(z):=(\iota^y)^*A_j$, where $\iota^y:(-2,2)^3\to(-2,2)^n$ is given by $\iota^y(z):=(y,z)$,
by Fatou's lemma we have
$$ \int_{(-2,2)^{n-3}}\left(\liminf_{\epsilon\to0}\int_{\{y\}\times(-2,2)^3} e_\epsilon(\Phi_j,A_j)\right)\,dy\le C.$$
Thus, for a.e.\ $y\in(-2,2)^{n-3}$, we can find a subsequence (depending on $y$, but not relabeled) such that
$$ \int_{(-2,2)^3} e_\epsilon(\Phi_j^y,A_j^y)\le C(y)<\infty. $$
Moreover, we clearly have $T_j^y=[Z(\Phi_j^y,A_j^y)]$,
where we identify $Z(\Phi_j^y,A_j^y)$ with its dual 0-current in $\mathbb{R}^3$ and, by Lemma \ref{slice.conv}, we know moreover that
$$T^y=\lim_{j\to\infty}T_j^y$$
for almost every $y$ as well. Hence, it is enough to show that $\frac{1}{4\pi}Z(\Phi_j^y,A_j^y)$ converges to an integral 0-current in $(-2,2)^3$ (i.e., a finite linear combination of Dirac masses, with integer coefficients). In the next section, we demonstrate the desired integrality of the limit current $T$ in the setting of Lemma \ref{curr.lim} in dimension three, completing the proof of Theorem \ref{thm: main theorem}.

\subsection{Liminf inequality: proof in dimension three}\hfill\\
Assume now that $[-1,1]^3\subset M\subset\R^3$;
in the sequel, with a slight abuse of notation, we will write $\Phi_\epsilon$, $A_\epsilon$, and $\epsilon$
in place of $\Phi_j$, $A_j$, and $\epsilon_j$.

It is enough to show that $Z(\Phi_\epsilon,A_\epsilon)$ converges
to ($4\pi$ times) an integral cycle $T$ along a subsequence, on the smaller open set $(-1,1)^3$.

We first claim that a good change of gauge exists on sufficiently small cubes. The following is a consequence of the classical result of Uhlenbeck \cite[Theorem 2.1]{UhlenbeckLP}. 

\begin{proposition}
    There exists a small constant $c_0>0$ with the following property:
    given a smooth connection one-form $A$ on a closed cube $Q\subset M$ (of any sidelength), if $\int_Q|F_A|^{3/2}\le c_0$ then we can find $\sigma\in W^{1,6}(Q,\Sp(1))$ such that
    the gauge-changed one-form
    $$A^\sigma=-(d\sigma)\sigma^{-1}+\sigma A\sigma^{-1}$$
    satisfies
    $$\|A^\sigma\|_{L^3(Q)}+\|DA^\sigma\|_{L^{3/2}(Q)}\le C\|F_A\|_{L^{3/2}(Q)},$$
    as well as
    $$\|A^\sigma\|_{L^6(Q)}+\|DA^\sigma\|_{L^{2}(Q)}\le C\|F_A\|_{L^{2}(Q)}.$$
\end{proposition}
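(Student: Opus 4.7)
The statement is a small-curvature Coulomb gauge result in dimension three, together with a bootstrap that upgrades the critical Uhlenbeck estimates to $L^2$-based ones. My plan has two main steps.

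\textbf{Step 1: Apply Uhlenbeck's theorem at the critical exponent.} Uhlenbeck's theorem in \cite{UhlenbeckLP}, Theorem~2.1 (stated on a ball, but equally valid on a cube either directly via Hodge theory with relative boundary conditions or by extending $A$ to a slightly larger ball, applying the ball version, and restricting; the hypothesis $\|F_A\|_{L^{3/2}(Q)}\le c_0$ is scale-invariant in dimension three, so the sidelength is irrelevant), produces a gauge change $\sigma\in W^{1,6}(Q,\Sp(1))$ such that $A^\sigma$ is Coulomb, $d^\ast A^\sigma=0$ (with the appropriate boundary condition on $\de Q$), and
$$\|A^\sigma\|_{W^{1,3/2}(Q)}\le C\|F_A\|_{L^{3/2}(Q)}.$$
The Sobolev embedding $W^{1,3/2}\hookrightarrow L^3$ in dimension three immediately yields the first bound
$$\|A^\sigma\|_{L^3(Q)}+\|DA^\sigma\|_{L^{3/2}(Q)}\le C\|F_A\|_{L^{3/2}(Q)}.$$

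\textbf{Step 2: Bootstrap to the $L^2$-based estimate.} From $F_{A^\sigma}=dA^\sigma+A^\sigma\wedge A^\sigma$ and $d^\ast A^\sigma=0$, a standard Gaffney/Hodge inequality on the cube gives
$$\|DA^\sigma\|_{L^2(Q)}\le C\bigl(\|F_{A^\sigma}\|_{L^2(Q)}+\|A^\sigma\wedge A^\sigma\|_{L^2(Q)}\bigr)\le C\bigl(\|F_A\|_{L^2(Q)}+\|A^\sigma\|_{L^4(Q)}^2\bigr),$$
using the pointwise gauge invariance $|F_{A^\sigma}|=|F_A|$. Interpolation gives $\|A^\sigma\|_{L^4}^2\le \|A^\sigma\|_{L^3}\|A^\sigma\|_{L^6}$, and Sobolev embedding plus Poincaré (available thanks to the boundary conditions on $A^\sigma$ coming from Step 1) yield $\|A^\sigma\|_{L^6(Q)}\le C\|DA^\sigma\|_{L^2(Q)}+\text{l.o.t.}$ Combining with the Step~1 bound $\|A^\sigma\|_{L^3(Q)}\le C\|F_A\|_{L^{3/2}(Q)}\le Cc_0^{2/3}$, we obtain
$$\|DA^\sigma\|_{L^2(Q)}\le C\|F_A\|_{L^2(Q)}+Cc_0^{2/3}\|DA^\sigma\|_{L^2(Q)}.$$
Choosing $c_0$ small enough (which is the quantitative smallness hypothesis already in the statement) we absorb the last term and get $\|DA^\sigma\|_{L^2(Q)}\le C\|F_A\|_{L^2(Q)}$, and then $\|A^\sigma\|_{L^6(Q)}\le C\|F_A\|_{L^2(Q)}$ by Sobolev.

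\textbf{Main obstacle.} The nontrivial point is the existence of the Coulomb gauge on a cube with the correct boundary condition, since Uhlenbeck's theorem is usually stated on a ball. This is routine but requires some care: one either reproves the implicit function/continuity argument directly on $Q$ using the Hodge decomposition with relative (or absolute) boundary conditions, or extends $A$ across $\de Q$ to a slightly larger ball $B\Supset Q$ while controlling $\|F\|_{L^{3/2}(B)}$ by $\|F\|_{L^{3/2}(Q)}$. Once this is in place, the two estimates follow from Sobolev embedding and the straightforward absorption bootstrap described above.
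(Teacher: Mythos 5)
Your proof is essentially correct and rests on the same foundation as the paper, which gives no argument beyond citing Uhlenbeck's Theorem 2.1: that theorem, under the scale-invariant smallness $\|F_A\|_{L^{3/2}}\le c_0$ in dimension three, already yields the Coulomb gauge together with $\|A^\sigma\|_{W^{1,p}}\le C\|F_A\|_{L^p}$ for every $p\in[n/2,n)=[3/2,3)$, so the $p=2$ case is part of the cited statement and your Step 2 bootstrap (Gaffney on the convex cube, interpolation $\|A^\sigma\|_{L^4}^2\le\|A^\sigma\|_{L^3}\|A^\sigma\|_{L^6}$, absorption using $\|A^\sigma\|_{L^3}\le Cc_0^{2/3}$) is a correct but redundant re-derivation of it; both stated bounds then follow from Sobolev embedding after rescaling to the unit cube, since all the inequalities are scale-invariant. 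One caution on your ``main obstacle'': the fallback of extending $A$ to a slightly larger ball ``while controlling $\|F\|_{L^{3/2}(B)}$ by $\|F\|_{L^{3/2}(Q)}$'' does not work as stated, because the curvature of an extension of the one-form $A$ is not controlled by the curvature on $Q$ in the given (possibly wild) gauge---one would first need a good gauge near $\partial Q$, which is circular. The sound route is your first alternative: rerun Uhlenbeck's continuity-method proof directly on the cube (a convex Lipschitz domain where the Neumann-type Hodge theory needed for the Coulomb boundary value problem is available), which is exactly the routine adaptation the paper implicitly takes for granted.
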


In our setting, it suffices to take cubes of sidelength $c_1\epsilon$, with $c_1$ small enough depending on the total energy: indeed, by H\"older's inequality we then have
$$ \int_Q|F_{A_\epsilon}|^{3/2}\le\left(\int_Q|F_{A_\epsilon}|^2\right)^{3/4}|Q|^{1/4}\le \epsilon^{-3/4}E_\epsilon(\Phi_\epsilon,A_\epsilon)^{3/4}(c_1\epsilon)^{3/4}\le c_0. $$

\begin{lemma}
    Dropping the subscript $\epsilon$,
    there exists a section $\tilde\Phi$ on $M$ with
    $$[|\tilde\Phi|]_{C^{0,1/2}(Q)}\le C\epsilon^{-1/2}E_\epsilon(\Phi,A)^{1/2}$$
    on each cube $Q$ of sidelength $\epsilon$ and $\operatorname{dist}(Q,\R^3\setminus M)\ge\epsilon$,
    as well as
    $$E_\epsilon(\tilde\Phi,A)\le CE_\epsilon(\Phi,A)$$
    and
    $$\|\tilde\Phi-\Phi\|_{L^2(M)}^2\le C\epsilon^3E_\epsilon(\Phi,A).$$
\end{lemma}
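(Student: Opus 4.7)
The plan is to regularize the scalar modulus $|\Phi|$ at scale $\epsilon$ via the gauge-invariant Kato inequality, and then to rescale $\Phi$ radially so that its modulus tracks this regularization. The starting observation is $|\nabla|\Phi||\le|d_A\Phi|$, which gives $\int_M|\nabla|\Phi||^2\le\epsilon E_\epsilon(\Phi,A)$, while $\|\,|\Phi|\,\|_{L^\infty}\le 1$ by the normalization \eqref{mod.at.most.1}. Off the interior set $M_\epsilon:=\{x\in M:\dist(x,\R^3\setminus M)\ge\epsilon\}$ I will simply take $\tilde\Phi:=\Phi$, so the Hölder condition is vacuous there; all nontrivial work concerns $M_\epsilon$.

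Setting $f:=|\Phi|$ and $h:=\rho_\epsilon\ast f$ for a standard nonnegative mollifier $\rho_\epsilon$ supported in $B_\epsilon(0)$, the estimate $\|\rho_\epsilon\|_{L^2(\R^3)}\le C\epsilon^{-3/2}$ together with the Kato-based bound $\|\nabla f\|_{L^2(M)}^2\le\epsilon E_\epsilon$ yields
$$\|\nabla h\|_{L^\infty(Q)}\le\|\rho_\epsilon\|_{L^2}\|\nabla f\|_{L^2(Q^\ast)}\le C\epsilon^{-3/2}(\epsilon E_\epsilon)^{1/2}=C\epsilon^{-1}E_\epsilon^{1/2}$$
on any cube $Q\subset M_\epsilon$ of sidelength $\epsilon$, where $Q^\ast$ denotes its $\epsilon$-enlargement. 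Combined with $\mathrm{diam}(Q)\le\sqrt{3}\,\epsilon$, this gives the target Hölder bound $[h]_{C^{0,1/2}(Q)}\le C\epsilon^{-1/2}E_\epsilon^{1/2}$. The standard $L^2$ mollification estimate moreover provides $\|h-f\|_{L^2(M_\epsilon)}^2\le C\epsilon^2\|\nabla f\|_{L^2(M)}^2\le C\epsilon^3 E_\epsilon$.

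Next I would define $\tilde\Phi:=h\,\Phi/|\Phi|$ on $\{\Phi\neq 0\}\cap M_\epsilon$, extended measurably (e.g.\ by zero) on $\{\Phi=0\}\cap M_\epsilon$. Then $|\tilde\Phi|=h$ wherever $\Phi\neq 0$, so the Hölder bound is inherited directly. Moreover $|\tilde\Phi-\Phi|\le|h-f|$ pointwise (with equality on $\{\Phi\neq0\}$, while on $\{\Phi=0\}$ one has $f=0$ and hence $h=h-f$), so the $L^2$-closeness follows from the previous paragraph. For the energy, a direct computation based on $|d_A(\Phi/|\Phi|)|^2=(|d_A\Phi|^2-|\nabla f|^2)/|\Phi|^2$---which in turn follows from the $\Ad$-invariance of the metric and the consequent identity $\langle d_A\Phi,\Phi\rangle=f\,df$---yields
$$|d_A\tilde\Phi|^2\le 2|\nabla h|^2+2(h/|\Phi|)^2|d_A\Phi|^2.$$

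The main obstacle is controlling the second term on the singular set $\{|\Phi|<h/2\}$, where the ratio $h/|\Phi|$ is pointwise unbounded. On the complement $\{|\Phi|\ge h/2\}$ the estimate is immediate since $h/|\Phi|\le 2$, so one absorbs that region into $CE_\epsilon(\Phi,A)$. On the singular set, $h\le 2(h-f)$, hence $(h/|\Phi|)^2|d_A\Phi|^2\le 4(h-f)^2|d_A\Phi|^2/|\Phi|^2$, and the required bound must be extracted via a coarea- or Hardy-type argument that exploits both the $L^2$-smallness of $h-f$ and the fine structure of the superlevel sets of $f$. A more robust alternative, which I would fall back on if the singular analysis becomes too delicate, is to truncate at a scale $\lambda=\lambda(\epsilon)$ by setting $\tilde\Phi:=h\,\Phi/\max(|\Phi|,\lambda)$ and calibrating $\lambda$ so that all three target estimates survive simultaneously; this avoids the singularity entirely at the cost of a slightly weaker Hölder constant and a careful choice of parameters.
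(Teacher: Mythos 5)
Your first two estimates (the H\"older bound for $h=\rho_\epsilon\ast|\Phi|$ and the $L^2$ closeness) are fine, but the proposal leaves exactly the crux of the lemma unproven, and the gap is not merely technical: the construction $\tilde\Phi=h\,\Phi/|\Phi|$ \emph{cannot} satisfy $E_\epsilon(\tilde\Phi,A)\le CE_\epsilon(\Phi,A)$ in general. The reason is that $|\nabla|\Phi||$ controls nothing about the oscillation of the phase $\phi=\Phi/|\Phi|$ where $|\Phi|$ is small, while multiplying the modulus back up to $h$ re-exposes that oscillation. Concretely, on a ball $B_\epsilon\subset M$ take $A=0$ and $\Phi=f\phi$ with $f$ radial, $f\equiv\delta$ on $B_{\epsilon/2}$, $f\equiv1$ outside $B_{3\epsilon/4}$, and with $\phi$ rotating in $S^2\subset\mathfrak{su}(2)$ at rate $|d\phi|\sim K/\epsilon$ inside $B_{\epsilon/4}$ (constant elsewhere). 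Then $\epsilon^{-1}\int|d\Phi|^2=\epsilon^{-1}\int(|\nabla f|^2+f^2|d\phi|^2)\le C$ once $\delta=1/K$, so $E_\epsilon(\Phi,A)\le C$ uniformly in $K$; but $h\ge c>0$ on $B_{\epsilon/4}$ (the mollification sees the annulus where $f=1$), hence $\epsilon^{-1}\int|d\tilde\Phi|^2\ge\epsilon^{-1}\int_{B_{\epsilon/4}}h^2|d\phi|^2\ge c\,K^2\to\infty$. So the ``coarea/Hardy-type argument'' you defer to does not exist. The truncation fallback $\tilde\Phi=h\,\Phi/\max(|\Phi|,\lambda)$ trades this problem for another: now $|\tilde\Phi|=h\min(|\Phi|,\lambda)/\lambda$, and $\min(|\Phi|,\lambda)$ is only a $W^{1,2}$ function, so it can oscillate by $\sim\lambda$ at arbitrarily small scales within the energy budget $\int|\nabla|\Phi||^2\le C\epsilon$, destroying the $C^{0,1/2}$ bound with constant $C\epsilon^{-1/2}E_\epsilon^{1/2}$ unless $\lambda$ is taken so small that the factor $h/\lambda$ ruins the energy bound again. (A further, smaller defect: extending $\tilde\Phi$ by zero on $\{\Phi=0\}$ makes $|\tilde\Phi|$ discontinuous whenever this set is fat and $h$ does not vanish on it.)

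The paper's proof avoids all of this by never separating modulus from phase: on cubes of sidelength $\sim\epsilon$ one first passes to an Uhlenbeck gauge $\sigma_j$ with $\|A^{\sigma_j}\|_{L^6}\le C\|F_A\|_{L^2}$, so that $d(\sigma_j\Phi)$ is controlled by $d_A\Phi$ plus an $L^2$-small term $\sim\epsilon\|F_A\|_{L^2}$, and then mollifies the \emph{full} gauge-transformed section at scale $\sim\epsilon$ (with a variable scale $\lambda_j$ so nothing changes near the cube boundaries). Convexity of the Dirichlet integrand under averaging gives the energy bound for free, the mollification gives an $L^\infty$ bound $|d(\sigma_j\Phi')|\le C\epsilon^{-1}E_\epsilon^{1/2}$ and hence the $C^{0,1/2}$ bound for the gauge-invariant quantity $|\Phi'|$, and the real work is patching adjacent cubes, using Morrey's inequality on the transition functions $\sigma_{j-1}\sigma_j^{-1}$ to keep the H\"older estimate across cube interfaces. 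If you want to salvage your approach you would need some mechanism (like these good gauges) that controls the phase where $|\Phi|$ is small; controlling only $|\Phi|$ is provably insufficient.
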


\begin{proof}
    In this proof, we let $N:=\lceil\frac{2}{c_1\epsilon}\rceil$
    and $\ell:=\frac{1}{N}$, so that $2\ell\le c_1\epsilon$, as well as $\delta:=\ell/10$,
    which is comparable with $\epsilon$.
    We first regularize $\Phi$ on the slab $S:=[-1,1]\times[-\ell,\ell]^2$
    (actually, on the slightly thinner one $\tilde S:=[-1+2\delta,1-2\delta]\times[-\ell+2\delta,\ell-2\delta]^2$).

    We cover $S$ with $2N-1$ cubes $Q_1,\dots,Q_{2N-1}$ of sidelength $2\ell$, given by
    $$Q_j:=[-1+j\ell-\ell,-1+j\ell+\ell]\times[-\ell,\ell]\times[-\ell,\ell].$$
    We apply the previous lemma for each $j$, finding a gauge transformation
    $\sigma_j:Q_j\to\Sp(1)$ such that
    $$\|A^{\sigma_j}\|_{L^6(Q_j)}\le C\|F_A\|_{L^2(Q_j)}.$$
    Let $\lambda_j:M\to[0,\delta]$ be given by
    $$\lambda_j(x):=\min\{\delta,\operatorname{dist}(x,M\setminus Q_j)\},$$
    so that $\lambda_j$ is $1$-Lipschitz and
    $$\lambda_j=\delta\quad\text{on }\tilde Q_j,$$
    where $\tilde Q_j\subset Q_j$ consists of all points with distance at least $\delta$ from $\partial Q_j$.
    We also fix a smooth function $\chi:\R^3\to[0,\infty)$ supported in the ball $B_{1/2}(0)\subset\R^3$, with $\int_{\R^3}\chi=1$, so that
    $$|d\lambda_j(x)||y|\le \frac12$$
    for a.e.\ $x\in M$ and $y\in\operatorname{spt}(\chi)$.
    Note that this guarantees that the map $x\mapsto x+\lambda_j(x)y$ is bi-Lipschitz and equals the identity on $M\setminus Q_j$, for all $y\in\operatorname{spt}(\chi)$.
    
    We now let $\Phi'$ be the new section obtained by considering all cubes $Q_{j}$ with $j$ odd
    and replacing $\Phi(x)$ with
    $$\Phi'(x):=\sigma_j(x)^{-1}\int_{B_{1/2}}(\sigma_j\Phi)(x+\lambda_j(x)y)\chi(y)\,dy$$
    on $Q_j$, leaving $\Phi$ unchanged on $M\setminus S$
    (note that $S$ is the disjoint union of such cubes, neglecting their boundaries,
    where clearly $\Phi'=\Phi$).

Using \eqref{mod.at.most.1}, on $Q_j$ we deduce the bound
$$\|d(\sigma_j\Phi)\|_{L^2(Q_j)}\le\|d_A\Phi\|_{L^2(Q_j)}+\|A^{\sigma_j}\|_{L^2(Q_j)}
\le\|d_A\Phi\|_{L^2(Q_j)}+C\epsilon\|F_A\|_{L^2(Q_j)},$$
since we have $\|A^{\sigma_j}\|_{L^2(Q_j)}\le|Q_j|^{1/3}\|A^{\sigma_j}\|_{L^6(Q_j)}$ by H\"older.
Moreover, thanks to the fact that $x\mapsto x+\lambda_j(x)y$ is bi-Lipschitz and the convexity of the $L^2$-norm, we have
$$\|d(\sigma_j\Phi')\|_{L^2(Q_j)}\le C\|d(\sigma_j\Phi)\|_{L^2(Q_j)},$$
giving the gauge-invariant bound
$$\|d_A\Phi'\|_{L^2(Q_j)}\le C\|d_A\Phi\|_{L^2(Q_j)}+C\epsilon\|F_A\|_{L^2(Q_j)},$$
and thus
\begin{equation}\label{en.same}
    E_\epsilon(\Phi',A;S)\le CE_\epsilon(\Phi,A;S).
\end{equation}

As for the distance in $L^2$, it suffices to bound
$$x\mapsto(\sigma_j\Phi)(x+\lambda_j(x)y)-(\sigma_j\Phi)(x)$$
in $L^2$ for each $y\in\operatorname{spt}(\chi)$.
To do this, we write
$$|(\sigma_j\Phi)(x+\lambda_j(x)y)-(\sigma_j\Phi)(x)|\le\int_0^1 |d(\sigma_j\Phi)(x+s\lambda_j(x)y)||\lambda_j(x)y|\,dy.$$
Since $|\lambda_j(x)y|\le\epsilon$, using again the fact that $x\mapsto x+s\lambda_j(x)y$ is bi-Lipschitz we get
$$\int_{Q_j}|(\sigma_j\Phi)(x+\lambda_j(x)y)-(\sigma_j\Phi)(x)|^2\,dx
\le C\epsilon^2\int_{Q_j}|d(\sigma_j\Phi)|^2\le C\epsilon^3E_\epsilon(\Phi,A;Q_j).$$
In particular, summing over $j$ we get the bound
\begin{equation}\label{l2.shift}
    \|\Phi'-\Phi\|_{L^2(M)}^2\le C\epsilon^3E_\epsilon(\Phi,A;S).
\end{equation}

Since $\lambda_j=\delta$ on $\tilde Q_j$, a simple change of variables gives
$$|d(\sigma_j\Phi')|(x)\le C\epsilon^{-3}\int_{Q_j}|d(\sigma_j\Phi)|
\le C\epsilon^{-3/2}\|d(\sigma_j\Phi)\|_{L^2(Q_j)}
\le C\epsilon^{-1}E_\epsilon(\Phi,A)^{1/2}$$
for $x\in \tilde Q_j$ (and $j$ odd). In particular, in terms of the $C^{0,1/2}$-seminorm we have
$$[\sigma_j\Phi']_{C^{0,1/2}(\tilde Q_j)}\le C\epsilon^{-1/2}E_\epsilon(\Phi,A)^{1/2}$$
for $j$ odd, and thus the same bound holds for $|\Phi'|$, which is gauge-invariant.

We now repeat the procedure on the cubes $Q_j$ with $j$ even,
obtaining a new section $\Phi''$. This section satisfies the same bounds \cref{en.same} and \cref{l2.shift}, as well as
$$[\sigma_j\Phi'']_{C^{0,1/2}(\tilde Q_j)}\le C\epsilon^{-1/2}E_\epsilon(\Phi,A)^{1/2}$$
for $j=2,4,\dots,2N-2$ even, and thus
$$[|\Phi''|]_{C^{0,1/2}(\tilde Q_j)}\le C\epsilon^{-1/2}E_\epsilon(\Phi,A)^{1/2}$$
for the same indices.

Letting $\tilde S:=[-1+2\delta,1-2\delta]\times[-\ell+2\delta,\ell-2\delta]^2$,
we claim that, in fact, the same holds also on $\tilde Q_j\cap\tilde S$ for $j$ odd; we focus on the case $j\neq1,2N-1$,
since the endpoint case is analogous.

Let us fix $j\in\{3,\dots,2N-3\}$; it is enough to show H\"older continuity on
$$(\tilde Q_j\cap\tilde S)\setminus(\tilde Q_{j-1}\cup\tilde Q_{j+1})
\subseteq[-1+j\ell-\delta,-1+j\ell+\delta]\times[-\ell+2\delta,\ell-2\delta]^2.$$
We deal only with $R:=[-1+j\ell-\delta,-1+j\ell]\times[-\ell+2\delta,\ell-2\delta]^2$, since the argument for the second half is identical.
We let $\Psi:=\sigma_j\Phi'$, defined and H\"older continuous on $\tilde Q_j$, and
$$\sigma:Q_{j-1}\cap Q_j\to \Sp(1),\quad \sigma:=\sigma_{j-1}\sigma_j^{-1},$$
the change-of-gauge function. Note carefully that,
by definition of $\lambda_{j-1}$,
we have $\lambda_{j-1}\le\delta$ and $x+\lambda_{j-1}(x)y\in Q_{j-1}$ for $x\in Q_{j-1}$ and $y\in\operatorname{spt}(\chi)$, giving
$$x+\lambda_{j-1}(x)y\in [j\ell-2\delta,j\ell]\times[-\ell+\delta,\ell-\delta]^2
\subseteq Q_{j-1}\cap\tilde Q_j\quad\text{for all }x\in R.$$
By definition, for $x\in R$ we have
$$(\sigma_j\Phi'')(x)=\sigma(x)^{-1}(\sigma_{j-1}\Phi'')(x)=\sigma(x)^{-1}\int_{B_1}(\sigma_{j-1}\Phi')(x+\lambda_{j-1}(x)y)\chi(y)\,dy,$$
and hence at these points we can write
$$(\sigma_j\Phi'')(x)=\sigma(x)^{-1}\int_{B_1}(\sigma\Psi)(x+\lambda_{j-1}(x)y)\chi(y)\,dy.$$

Further, since on $Q_{j-1}\cap Q_j$ we have the identity
$$A^{\sigma_{j-1}}=-(d\sigma)\sigma^{-1}+\sigma A^{\sigma_j}\sigma^{-1},$$
we can bound
$$\|d\sigma\|_{L^6(Q_{j-1}\cap Q_j)}\le\|A^{\sigma_{j-1}}\|_{L^6(Q_{j-1})}+\|A^{\sigma_{j}}\|_{L^6(Q_{j})}
\le C\|F_A\|_{L^2(Q_{j-1}\cup Q_j)}.$$
Thus, by Morrey's inequality, we obtain
$$[\sigma]_{C^{0,1/2}(Q_{j-1}\cap Q_j)}\le C\|F_A\|_{L^2(Q_{j-1}\cup Q_j)}\le C\epsilon^{-1/2}E_\epsilon(\Phi,A)^{1/2}.$$
Since $\Psi$ satisfied the same bound on $\tilde Q_j$, the same holds for the product $\sigma\Psi$
on $Q_{j-1}\cap\tilde Q_j$. Since $x\mapsto x+\lambda_{j-1}(x)y$ is Lipschitz, the claim follows.

As a consequence, $\Phi''$ is H\"older continuous on the smaller slab $\tilde S$.
Next, for $j=1,\dots,2N-1$, we consider the translated slabs
$$S_j:=[-1,1]\times [-1+j\ell-\ell,-1+j\ell+\ell]\times[-\ell,\ell],$$
whose union is $[-1,1]\times[-1,1]\times[-\ell,\ell]$,
and we perform the previous replacement simultaneously for all odd indices $j$,
obtaining a section which is H\"older continuous on each
$$\tilde S_j:=[-1+2\delta,1-2\delta]\times [-1+j\ell-\ell+2\delta,-1+j\ell+\ell-2\delta]\times[-\ell+2\delta,\ell-2\delta],$$
and then for all the even indices, ending up with a section which is H\"older continuous
on
$$[-1+3\delta,1-3\delta]\times[-1+3\delta,1-3\delta]\times[-\ell+3\delta,\ell-3\delta],$$
satisfying the same integral bounds.
Finally, we do the same for the third coordinate, obtaining the desired section $\tilde\Phi$,
which is H\"older continuous on $[-1+4\delta,1-4\delta]^3$.
\end{proof}

As a consequence, we get
$$ \|\beta(\tilde\Phi_{\epsilon},A_{\epsilon})-\beta(\Phi_{\epsilon},A_{\epsilon})\|_{L^1}
\le \|\tilde\Phi_{\epsilon}-\Phi_{\epsilon}\|_{L^2}\|F_{A_{\epsilon}}\|_{L^2}
\le C\epsilon E_\epsilon(\Phi_{\epsilon},A_{\epsilon}), $$
so that $Z(\Phi_{\epsilon},A_{\epsilon})$ has the same limit as $Z(\tilde\Phi_{\epsilon},A_{\epsilon})$, and we can replace $\Phi$ with $\tilde\Phi$ in the sequel.

Now fix $\ell>0$ small, independent of $\epsilon$.
By a simple averaging argument, we can
select a grid of fixed sidelength $\ell$ in $(-1+\epsilon,1-\epsilon)^3$,
such that the 3-skeleton includes $(-1+2\ell,1-2\ell)^3$
and on the 2-skeleton $S_2$ we have
\begin{equation}\label{2skel.est}\int_{S_2}e_\epsilon(\Phi_{\epsilon},A_{\epsilon})\le\frac{C}{\ell},\quad\int_{S_2}(1-|\Phi_{\epsilon}|)^2=o(\epsilon/\ell).
\end{equation}
On each plane $\Sigma\subset S_2$ we then have
$$\int_\Sigma|1-|\Phi_{\epsilon}||^2+|d|\Phi_{\epsilon}||^2\le \frac{C\epsilon}{\ell}$$
for $\epsilon$ small enough, which for each $p\in(1,\infty)$ gives
$$\|1-|\Phi_{\epsilon}|\|_{L^p(\Sigma)}^2\le C(p)\frac{\epsilon}{\ell}$$
by Sobolev embedding.
If we had $|\Phi_{\epsilon}|<\frac34$ at some $x\in\Sigma$ then, recalling that $|\Phi_{\epsilon}|$ is H\"older continuous at scale $\epsilon$, we would have $|\Phi_{\epsilon}|<\frac78$ on a ball $B_{c\epsilon}(x)\cap\Sigma$,
where $c$ depends also on (an upper bound on) the energy; hence,
for any fixed $p\in(4,\infty)$ we would get
$$\|1-|\Phi_{\epsilon}|\|_{L^p(\Sigma)}^2\ge c(p)\epsilon^{4/p}.$$
Hence, we would reach the inequality
$$\epsilon^{4/p}\le C(p)\frac{\epsilon}{\ell},$$
which is impossible for $\epsilon$ small enough (as $\ell>0$ is fixed). 

Thus, $|\Phi_{\epsilon}|\ge\frac34$ on the 2-skeleton $S_2$. 
%
Calling $x_Q$ the center of each $3$-face $Q$, we now define the $0$-current
$$T_{\epsilon,\ell}:=\sum_Q\left(\int_Q Z(\Phi_{\epsilon},A_{\epsilon})\right)\delta_{x_Q}.$$
The multiplicity of $T_{\epsilon,\ell}$ at each $x_Q$ equals
$$\int_Q Z(\Phi_{\epsilon},A_{\epsilon})=\int_{\de Q}2\beta(\Phi_{\epsilon},A_{\epsilon});$$
since $|\Phi_{\epsilon}|\ge \frac34$ on $\de Q$, we observe that
$$|\beta(\Phi_{\epsilon},A_{\epsilon})-|\Phi_{\epsilon}|^{-1}\beta(\Phi_{\epsilon},A_{\epsilon})|\leq C (1-|\Phi_{\epsilon}|)|F_{A_{\epsilon}}|,$$
which together with \eqref{beta.quant} yields
$$\left|\int_Q Z(\Phi_{\epsilon},A_{\epsilon})+4\pi \deg(\phi_{\epsilon}|_{\de Q})\right|\le C\int_{\de Q}(1-|\Phi_{\epsilon}|)|F_{A_{\epsilon}}|+|d_{A_{\epsilon}}\Phi_{\epsilon}|^2.$$
In particular, denoting by $S_{\epsilon,\ell}$ the integral current
$$S_{\epsilon,\ell}:=\sum_Q -\deg(\phi_{\epsilon}|_{\de Q})\delta_{x_Q},$$
it follows that
\begin{align*}
    \mathbb{M}(T_{\epsilon,\ell}-4\pi S_{\epsilon,\ell})&\leq C\int_{S_2}(1-|\Phi_{\epsilon}|)|F_{A_{\epsilon}}|+|d_{A_{\epsilon}}\Phi_{\epsilon}|^2\\
    &\leq C\left(\int_{S_2}\frac{(1-|\Phi_{\epsilon}|)^2}{\epsilon}\right)^{1/2}\left(\int_{S_2}e_{\epsilon}(\Phi_{\epsilon},A_{\epsilon})\right)^{1/2}+C\epsilon \int_{S_2}e_{\epsilon}(\Phi_{\epsilon},A_{\epsilon}),
\end{align*}
by an application of the Cauchy--Schwarz inequality. By \eqref{2skel.est}, we know moreover that the right-hand side of the preceding estimate vanishes as $\epsilon\to 0$, so that
\begin{equation}\label{int.lim}\lim_{\epsilon\to 0}\mathbb{M}(T_{\epsilon,\ell}-4\pi S_{\epsilon,\ell})=0.
\end{equation}
Moreover, since 
$$\mathbb{M}(T_{\epsilon,\ell})=\sum_Q\Big|\int_Q Z(\Phi_{\epsilon},A_{\epsilon})\Big|\le E_{\epsilon}(\Phi_{\epsilon},A_{\epsilon})\leq C$$
as $\epsilon\to 0$, after passing to a further subsequence, we see that $\frac{1}{4\pi}T_{\epsilon,\ell}$ converges as $\epsilon\to 0$ to a limit $S_{\ell}$, which, by \eqref{int.lim}, must be integral. Moreover, since the mass $\mathbb{M}(S_{\ell})\leq C$ is bounded independent of $\ell$, we can pass to another subsequential limit $S_{\ell}\to S$ as $\ell \to 0$, with $S$ an integral current. 

Finally, letting $U_\ell:=(-1+2\ell,1-2\ell)^3$, for any $\zeta\in C^1_c(U_\ell)$ we have
\begin{align*}
    \int_{U_\ell} Z(\Phi_\epsilon,A_\epsilon)\wedge\zeta&=\sum_Q\left(\int_Q Z(\Phi_{\epsilon},A_{\epsilon})\right)\zeta(x_Q)+O(\ell\|d\zeta\|_{C^0(U_\ell)})\\
    &=\langle T_{\epsilon,\ell},\zeta\rangle+O(\ell\|d\zeta\|_{C^0(U_\ell)}).
\end{align*}
$$ $$
Letting $T:=\frac{1}{4\pi}\lim_{\epsilon\to 0}[Z(\Phi_{\epsilon},A_{\epsilon})]$,
we deduce that
$$\|T-S_{\ell}\|_{C^1_c(U_\ell)^*}\leq C\ell$$
for all $\ell>0$, and therefore $T=S$ is indeed an integral $0$-current, as desired.


\section{Limsup inequality} \label{sec.limsup}
The aim of this section is to prove Theorem \ref{thm: recovery}, the `$\limsup$' part of our $\Gamma$-convergence result. We begin with a useful computational lemma.

\begin{proposition} \label{proposition: dAPhi = 0}
    Given an open set $U\subseteq M^n$ and
    a smooth unit section $\Phi:U\to\mathfrak{su}(2)\subset\mathbb{H}$, for an arbitrary $\alpha\in\Omega^1(U,\R)$ the connection one-form
    $$ A:=\mz\Phi^{-1}\,d\Phi+\Phi\alpha=-\mz\Phi\,d\Phi+\Phi\alpha $$
    satisfies
    $$ d_A\Phi=0,\quad F_A=-\frac14d\Phi\wedge d\Phi+\Phi\,d\alpha. $$
\end{proposition}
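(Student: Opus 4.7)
The key algebraic facts I will exploit are that, since $\Phi$ is a unit imaginary quaternion, $\Phi^2=-1$ (so $\Phi^{-1}=-\Phi$, giving immediately the equality of the two displayed expressions for $A$), and, differentiating this pointwise identity, $\Phi\,d\Phi+d\Phi\,\Phi=0$. The latter shows that $\overline{\Phi\,d\Phi}=-\Phi\,d\Phi$, so $\Phi\,d\Phi$ is imaginary-quaternion valued, which together with the fact that $\Phi\alpha\in\Omega^1(U;\mathfrak{su}(2))$ confirms $A\in\Omega^1(U;\mathfrak{su}(2))$. Multiplying $\Phi\,d\Phi=-d\Phi\,\Phi$ on the right by $\Phi$ and using $\Phi^2=-1$ also gives the crucial identity
\begin{equation*}
\Phi(d\Phi)\Phi=d\Phi,
\end{equation*}
which will be the workhorse for all the calculations below.

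For the statement $d_A\Phi=0$, I expand $d_A\Phi=d\Phi+[A,\Phi]=d\Phi+A\Phi-\Phi A$ and plug in $A=-\tfrac12\Phi\,d\Phi+\Phi\alpha$. The $\Phi\alpha$ contribution yields $\Phi\alpha\cdot\Phi-\Phi\cdot\Phi\alpha$, which vanishes because $\alpha$ is real-valued and hence commutes with $\Phi$ (both terms equal $\Phi^2\alpha=-\alpha$). The remaining piece $-\tfrac12\Phi(d\Phi)\Phi+\tfrac12\Phi^2\,d\Phi=-\tfrac12\,d\Phi-\tfrac12\,d\Phi=-d\Phi$ by the workhorse identity, cancelling the leading $d\Phi$.

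For $F_A=dA+A\wedge A$, I compute $dA=-\tfrac12\,d\Phi\wedge d\Phi+d\Phi\wedge\alpha+\Phi\,d\alpha$, using $d(\Phi\,d\Phi)=d\Phi\wedge d\Phi$. The quadratic term $A\wedge A$ breaks into four pieces, which I will simplify using the quaternionic identities above, working in local coordinates $d\Phi=\partial_i\Phi\,dx^i$ so that wedge products are unambiguous. The diagonal piece gives $(\Phi\,d\Phi)\wedge(\Phi\,d\Phi)=d\Phi\wedge d\Phi$ by the workhorse identity applied in each summand; the two mixed pieces each contribute $-\tfrac12\,d\Phi\wedge\alpha$ (using $\Phi^2=-1$ and the commutativity of $\alpha$ with $\Phi$, plus the antisymmetry of $\wedge$ on 1-forms to rewrite $\alpha\wedge d\Phi=-d\Phi\wedge\alpha$); and the last piece $(\Phi\alpha)\wedge(\Phi\alpha)=\Phi^2\,\alpha\wedge\alpha=0$. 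Summing gives $A\wedge A=\tfrac14\,d\Phi\wedge d\Phi-d\Phi\wedge\alpha$, which combines with $dA$ to yield $F_A=-\tfrac14\,d\Phi\wedge d\Phi+\Phi\,d\alpha$.

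The only real subtlety is bookkeeping: quaternion multiplication is noncommutative, so the two mixed cross terms in $A\wedge A$ must be handled separately, and the cancellations rely sensitively on the fact that $\alpha$ is \emph{real}-valued (so commutes freely with $\Phi$) while $d\Phi$ is genuinely quaternion-valued. Once one commits to local coordinates and systematically applies $\Phi^2=-1$ and $\Phi(d\Phi)\Phi=d\Phi$, every term lines up without further input.
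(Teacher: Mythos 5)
Your proof is correct and follows essentially the same route as the paper: both expand $d_A\Phi=d\Phi+[A,\Phi]$ and $F_A=dA+A\wedge A$ directly, using $\Phi^2=-1$ and the anticommutation $\Phi\,d\Phi=-d\Phi\,\Phi$ (your identity $\Phi(d\Phi)\Phi=d\Phi$ is just this identity multiplied by $\Phi$). The explicit coordinate bookkeeping and the check that $A$ is $\mathfrak{su}(2)$-valued are fine but cosmetic additions to the paper's argument.
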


\begin{proof}
    Recall that, since $\bar\Phi=-\Phi$, we have $\Phi^2=-\Phi\bar\Phi=-1$. As a consequence, $\Phi^{-1}=-\Phi$ and
    \begin{equation}\label{idunit}
        \Phi\,d\Phi=-d\Phi\,\Phi.
    \end{equation}
    Thus,
    $$ d_A\Phi=d\Phi+[A,\Phi]=d\Phi+A\Phi-\Phi A=\mz d\Phi-\mz\Phi\,d\Phi\,\Phi=\mz(1+\Phi^2)\,d\Phi=0 $$
    and, since $\Phi\alpha\wedge\Phi\alpha=\Phi^2\alpha\wedge\alpha=0$, we also have
    $$ F_A=dA+A\wedge A=-\mz d\Phi\wedge d\Phi+d(\Phi\alpha)+\frac14\Phi\,d\Phi\wedge\Phi\,d\Phi-\mz(\Phi\,d\Phi\wedge\Phi\alpha+\Phi\alpha\wedge\Phi\,d\Phi). $$
    Using \cref{idunit}, we obtain
    $$-\mz(\Phi\,d\Phi\wedge\Phi\alpha+\Phi\alpha\wedge\Phi\,d\Phi)=-d\Phi\wedge\alpha,$$
    from which the conclusion follows.
\end{proof}




Next, recall from \cite[Section IV.1]{JaffeTaubes} the standard \emph{Bogomolnyi--Prasad--Sommerfield (BPS)} monopole $(\Phi_0,A_0)$ of degree $\mp1$ on $\mathbb{R}^3$. Identifying $\mathbb{R}^3\cong \{\R\bf{i}+\R\bf{j}+\R\bf{k}\}$ and recalling \cref{factors.2}, we have
$$\Phi_0(x):=\mp \left(\frac{1}{r \tanh(2r)}-\frac{1}{2r^2}\right)x,$$
and 
$$A_0(x):=\left(\frac{1}{r\sinh(2r)}-\frac{1}{2r^2}\right)\sum_{i=1}^3(x\times e_i)\,dx_i, $$ where $x \times e_i$ denotes the vector product, and $e_i$ the usual coordinate vectors. 
This pair satisfies the first order monopole equations
$$\ast d_{A_0} \Phi_0 = \pm F_{A_0},$$
as well as
$$E_1(\Phi_0,A_0)=4\pi.$$
As discussed on \cite[p.\ 105]{JaffeTaubes}, we then have the pointwise estimates
$$1-|\Phi_0(x)|=\frac{1}{2|x|}+O(e^{-4|x|})$$
and
$$|d_{A_0}\Phi_0|(x)=O(1/|x|^2).$$ 
In particular, we can write 
\begin{equation} \label{equation: PHI0}
\Phi_0(x)=\mp\frac{2r-\tanh(2r)}{2r\tanh(2r)}\Psi(x),     
\end{equation}
where $\Psi(x):=\frac{x}{|x|}$ is the singular unit section whose distributional Jacobian gives the Dirac mass $4\pi\delta_0$. This shows that $\Phi_0$ vanishes only at the origin and has degree $\mp1$ at infinity. Furthermore, thanks to \cite[p.\ 105]{JaffeTaubes}, we know that $(\Phi_0(x),A_0(x))$ is real analytic on $\mathbb{R}^3$. We summarize in the following lemma the scaling properties of the BPS monopole of degree $-1$.

\begin{lemma} \label{lemma: BPS monopole}
The rescalings $(\Phi^{BPS}_{\epsilon}(x),A^{BPS}_{\epsilon}(x))=(\Phi_0(x/\epsilon),\epsilon^{-1}A_0(x/\epsilon))$ of the BPS monopole satisfy
$$E_{\epsilon}(\Phi_{\epsilon},A_{\epsilon})=4\pi,$$
$$1-|\Phi_{\epsilon}|(x)=\frac{\epsilon}{2|x|}+O(e^{-4|x|/\epsilon}),$$
$$\epsilon|d_{A_{\epsilon}}\Phi_{\epsilon}|(x)=\epsilon^3|F_{A_{\epsilon}}|(x)=O(\epsilon^2/|x|^2),$$
and $\Phi_{\epsilon}(x)=|\Phi_{\epsilon}(x)|\frac{x}{|x|}$.
\end{lemma}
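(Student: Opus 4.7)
The plan is to reduce each of the four claims to the corresponding property of the unrescaled BPS monopole $(\Phi_0,A_0)$ on $\mathbb{R}^3$ by a direct scaling argument, so the whole proof is essentially a chain rule computation plus a change of variables.

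The first step is to record how covariant derivative and curvature transform under the rescaling. Writing $A_0 = \sum_i f_i(x)\, dx_i$ in coordinates, so that $A_\epsilon(x) = \epsilon^{-1}\sum_i f_i(x/\epsilon)\, dx_i$, a direct chain-rule computation gives
\begin{equation*}
d_{A_\epsilon}\Phi_\epsilon(x) = \epsilon^{-1}(d_{A_0}\Phi_0)(x/\epsilon), \qquad F_{A_\epsilon}(x) = \epsilon^{-2}F_{A_0}(x/\epsilon),
\end{equation*}
as $\mathfrak{su}(2)$-valued 1- and 2-forms, respectively; the extra $\epsilon^{-1}$ in the formula for $F_{A_\epsilon}$ comes from the Jacobian in $dA_\epsilon$ combined with the prefactor $\epsilon^{-1}$ in $A_\epsilon$, together with the fact that $A_\epsilon\wedge A_\epsilon$ is quadratic in $A_\epsilon$. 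In particular, the rescaled pair satisfies the $\epsilon$-Bogomolnyi equation $*d_{A_\epsilon}\Phi_\epsilon = \pm\epsilon F_{A_\epsilon}$, from which one reads off $|d_{A_\epsilon}\Phi_\epsilon| = \epsilon|F_{A_\epsilon}|$ pointwise.

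For the energy identity, the plan is to perform the change of variables $y=x/\epsilon$, $dx = \epsilon^3\, dy$, and combine it with the scalings above:
\begin{equation*}
\frac{1}{\epsilon}|d_{A_\epsilon}\Phi_\epsilon|^2(x)\, dx = |d_{A_0}\Phi_0|^2(y)\, dy, \qquad \epsilon|F_{A_\epsilon}|^2(x)\, dx = |F_{A_0}|^2(y)\, dy,
\end{equation*}
so that $E_\epsilon(\Phi_\epsilon,A_\epsilon)=E_1(\Phi_0,A_0)=4\pi$. The bound on $1 - |\Phi_\epsilon|(x) = 1 - |\Phi_0|(x/\epsilon)$ follows immediately by substituting $x/\epsilon$ into the given asymptotic for $1-|\Phi_0|$. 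Similarly, the pointwise derivative bound follows from $|d_{A_\epsilon}\Phi_\epsilon|(x) = \epsilon^{-1}|d_{A_0}\Phi_0|(x/\epsilon) = O(\epsilon/|x|^2)$, and the corresponding bound on $|F_{A_\epsilon}|$ then follows either by the monopole equation together with the scaling, or from the same asymptotic for $|F_{A_0}|$. Finally, the radial form is read off directly from \eqref{equation: PHI0}: since $\Phi_0(x) = c(|x|)\, x/|x|$ for the scalar function $c(r) = \mp(2r-\tanh(2r))/(2r\tanh(2r))$, substituting $x/\epsilon$ gives $\Phi_\epsilon(x) = c(|x|/\epsilon)\, x/|x|$, so the unit direction of $\Phi_\epsilon(x)$ is indeed $x/|x|$ and $|\Phi_\epsilon(x)| = |c(|x|/\epsilon)|$.

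There is no genuine obstacle here; the whole argument is mechanical. The one point that requires a little care is keeping straight how 1-forms and 2-forms transform differently under the rescaling (picking up $\epsilon^{-1}$ versus $\epsilon^{-2}$), and tracking these factors consistently through the pointwise norms so that they combine correctly with the $\epsilon^{\pm 1}$ weights in $E_\epsilon$ and with the Jacobian $\epsilon^3$ from the change of variables.
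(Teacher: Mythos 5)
Your proof is correct and coincides with the paper's (implicit) argument: the lemma is stated there as a summary of scaling properties, with the unrescaled facts about $(\Phi_0,A_0)$ quoted from Jaffe--Taubes, and the intended justification is exactly the pullback/chain-rule computation $d_{A_\epsilon}\Phi_\epsilon(x)=\epsilon^{-1}(d_{A_0}\Phi_0)(x/\epsilon)$, $F_{A_\epsilon}(x)=\epsilon^{-2}F_{A_0}(x/\epsilon)$ combined with the change of variables $y=x/\epsilon$ that you carry out. Note only that your (correct) identity $|d_{A_\epsilon}\Phi_\epsilon|=\epsilon|F_{A_\epsilon}|$ shows the middle equality in the statement should read $\epsilon|d_{A_\epsilon}\Phi_\epsilon|=\epsilon^2|F_{A_\epsilon}|$ rather than $\epsilon^3|F_{A_\epsilon}|$ (a typo in the lemma); both quantities are still $O(\epsilon^2/|x|^2)$, which is all that is used in the recovery-sequence estimate.
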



\subsection{Proof of Theorem \ref{thm: recovery}: constructing a recovery sequence}\hfill\\
As before, let $M$ be a compact, oriented manifold with boundary, of dimension $n \geq 3$. By standard polyhedral approximation theorems (see in particular \cite[Theorem 4.2.22]{Federer}
and \cite[Proposition 4.2]{PPSgamma}), it is enough to prove Theorem \ref{thm: recovery} in the case where $T$ is a polyhedral $(n-3)$-current, and we may assume moreover that $T$ has multiplicity one on each face of $\spt(T)$ (see for instance \cite[Proposition 8.6]{ABO}). 

Now, let $P\in \mathcal{Z}_{n-3}(M,\partial M;\mathbb{Z})$ be a polyhedral $(n-3)$-boundary in the relative sense for $(M,\partial M)$ with multiplicity one on each face, with respect to some triangulation of $M$. In particular, we assume that there exists a polyhedral $(n-2)$-current $N\in \mathcal{P}_{n-2}(M; \mathbb{Z})$ such that
$$P-\partial N=Q,$$
where $\spt(Q)\subset \partial M$ and $P$ meets $\partial M$ transversally. As the starting point for our construction of a family of pairs $(\Phi_{\epsilon},A_{\epsilon})$ concentrating along $P$, we first recall the results of \cite[Section 5]{ABOprescribed}, and observe that they can be adapted to our setting.

\begin{proposition}\label{abo.prop}
    In the situation above, there exists a polyhedral set $S$ of dimension $\dim(S)\leq n-4$ and a locally Lipschitz map $v\in \Lip_{\loc}(M\setminus (P\cup S);S^2)$ such that
    $$*d(v^*(dA_{S^2}))=4\pi P$$
    and
    \begin{equation} \label{eq: bound on dv}
    |dv|(x)\leq \frac{C}{\dist(x,P\cup S)}    
    \end{equation}
for all $x\in M\setminus (P\cup S)$.
\end{proposition}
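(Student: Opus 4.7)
The plan is to adapt the codimension-two construction of \cite[Section 5]{ABOprescribed} to codimension three, with the target $S^1$ replaced by $S^2$ and the collapse $D^2/\partial D^2\cong S^1$ replaced by a three-dimensional analogue. First, fix a smooth triangulation of $M$ compatible with $P$, $N$, $Q$, and $\partial M$, and let $S_0$ be its $(n-4)$-skeleton, a polyhedral set of dimension $\leq n-4$ that will grow into the final $S$ as the construction proceeds.

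Near each open $(n-3)$-cell $\sigma$ of $P$, the normal bundle is a trivializable oriented rank-$3$ Euclidean bundle; choose an orthonormal trivialization respecting the multiplicity-$+1$ orientation of $\sigma$ in $P$, obtaining coordinates $(y,z)\in\sigma\times B^3_\rho(0)$ on a tubular neighborhood $U_\sigma$, and set $v_\sigma(y,z):=z/|z|$. On overlaps across an $(n-4)$-face of $P$, the two trivializations differ by a rotation in $\mathrm{SO}(3)$; because $P$ is a polyhedral cycle relative to $\partial M$ with multiplicity $\pm 1$, the resulting $\mathrm{SO}(3)$-cocycle is a coboundary after enlarging $S_0$ by those $(n-4)$-faces, giving a locally Lipschitz $v$ on a neighborhood of $P\setminus S_0$ in $M\setminus P$. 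To extend to the rest of $M$, I would use the auxiliary $(n-2)$-chain $N$ with $\partial N=P+Q$: on a tubular neighborhood of its relative interior, which is a rank-$2$ disc bundle trivializable away from a codimension-$\geq 2$ subset (absorbed into a growing polyhedral set $S\supset S_0$ still of dimension $\leq n-4$), write points as $(p,w)\in N^\circ\times D^2$ and define $v:=f\circ\pi_2$ for a fixed smooth degree-$+1$ map $f:D^2\to S^2$ sending $0$ to the north pole and $\partial D^2$ to the south pole; outside this tube, set $v\equiv$ south pole. For compatible choices of the trivializations of the normal bundles of $P$ and $N$, this Pontryagin--Thom construction matches the radial model $v_\sigma=z/|z|$ up to an $\mathrm{SO}(3)$-rotation on overlap annuli, and the rotation discrepancy is absorbed by a locally Lipschitz interpolation supported near $S$.

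The gradient bound \eqref{eq: bound on dv} follows directly from the local formulas: near $P$ one has $|dv_\sigma|=\sqrt{2}/|z|=O(\dist(\cdot,P)^{-1})$; near the lower-dimensional strata of $S$, similar Euclidean scalings give $|dv|=O(\dist(\cdot,S)^{-1})$; and elsewhere $v$ is smooth with uniformly bounded derivatives. For the identity $*d(v^*(dA_{S^2}))=4\pi P$, set $\omega:=v^*(dA_{S^2})$: this is closed on $M\setminus(P\cup S)$ and satisfies the pointwise bound $|\omega|\leq|dv|^2\in L^1_{\mathrm{loc}}(M)$. For any test form $\zeta\in\Omega^{n-3}_c(M\setminus\de M)$, Stokes' theorem on $M\setminus B_\rho(P\cup S)$ yields
$$\int_M\omega\wedge d\zeta=-\lim_{\rho\to 0}\int_{\partial B_\rho(P\cup S)}\omega\wedge\zeta.$$
The contribution from $S$ vanishes because $S$ has codimension $\geq 4$ together with the bound $|\omega|=O(\dist(\cdot,S)^{-2})$, while on transverse $2$-spheres to each open face of $P$ the degree-one computation for $z/|z|$ gives $\int_{\{y\}\times\partial B_\rho}v^*(dA_{S^2})=4\pi$, so the remaining boundary integral converges to $\pm 4\pi\int_P\zeta$, completing the verification up to orientation conventions.

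The main obstacle will be the compatibility step of the second paragraph: coherently choosing the normal-bundle trivializations of $P$ and $N$ so that the two local models match on overlap annuli, while confining the interpolation singularities to a polyhedral set of dimension $\leq n-4$. At the topological level this is a question about extending $S^2$-valued maps across an $(n-3)$-complex, whose primary obstruction in $H^3(\cdot;\pi_2(S^2))$ vanishes precisely because $[P]=0\in H_{n-3}(M,\partial M;\mathbb{Z})$; the concrete geometric work lies in converting this vanishing into an explicit locally Lipschitz interpolation with the required gradient control.
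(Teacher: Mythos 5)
The paper does not actually reprove the construction you outline: its proof of Proposition \ref{abo.prop} consists of quoting \cite[Theorem 5.10]{ABOprescribed} for domains (observing the argument is local and carries over to closed manifolds), and the only genuinely new content is the reduction of the manifold-with-boundary case to the closed case -- replacing $g$ by a metric that is a product near $\partial M$, doubling $M$ across $\partial M$ to get $\bar M$ with an isometric reflection $\rho$, setting $\bar N=N+\rho_*N$ and using transversality of $P$ so that $Q+\rho_*Q=0$, hence $\partial\bar N=P+\rho_*P$, then restricting the ABO map to $M$. Your proposal instead tries to rebuild the ABO construction from scratch and says nothing about the boundary: you never explain how to treat $Q\subset\partial M$, why the singularities your dipole construction creates along $\partial N$ near $\partial M$ are harmless, or how to get the Lipschitz bound up to $\partial M$, which is the one step the paper actually has to supply.

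Within the rebuilt construction itself there are two concrete gaps, both at the crux of the statement. First, the claim that the Pontryagin--Thom/dipole model around $N$ matches the radial model $z/|z|$ near $P$ ``up to an $\mathrm{SO}(3)$-rotation'' is not correct: on a small $2$-sphere transverse to an open $(n-3)$-face of $P$, the dipole map is constant outside a small cap while the radial model is the identity; they are merely homotopic (both of degree one), so one needs a genuine interpolation on annuli around \emph{every} face of $P$ -- not an interpolation ``supported near $S$'' -- carried out homogeneously in the radial variable so as to preserve $|dv|\leq C/\dist(\cdot,P\cup S)$. Second, near the $(n-4)$-faces, where several $(n-3)$-faces of $P$ (and sheets of $N$) meet, no $\mathrm{SO}(3)$-cocycle/coboundary argument applies: the relative cycle condition controls signed multiplicities, not normal trivializations, and the real work -- cone-type/$0$-homogeneous extensions with the $1/\dist$ gradient bound, confining the extra singularities to a polyhedral set of dimension $\leq n-4$ -- is exactly the content of \cite[Section 5]{ABOprescribed} that the paper invokes rather than reproves. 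You flag this step yourself as ``the main obstacle,'' but since it (together with the boundary reduction) is the entire substance of the proposition, the proposal as written is an outline with the essential steps missing; the efficient fix is to cite \cite[Theorem 5.10]{ABOprescribed} as the paper does and devote the argument to the doubling construction.
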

\begin{proof}
When $M$ is a domain in $\mathbb{R}^n$ and $\partial N=P$, this is precisely the content of \cite[Theorem 5.10]{ABOprescribed}. Moreover, those arguments carry over in a straightforward way to the case where $M$ is a closed Riemannian manifold and $P=\partial N$ for some polyhedral $(n-2)$-current $N$: indeed, note that all of the local constructions in \cite[Section 5]{ABOprescribed} can be implemented verbatim on small balls in $M$ whose geometry is approximately Euclidean.

More generally, let $M$ be a manifold with boundary, and let $P$ be a multiplicity-one polyhedral $(n-3)$-current meeting $\partial M$ transversally, such that $ \spt(P-\partial N)\subset \partial M$ for some polyhedral $(n-2)$-current $N$ in $M$. Note that we can replace the given metric $g$ on $M$ with another metric $g'$ satisfying $C^{-1}g\leq g'\leq Cg$ such that $(M,g')$ is isometric to $\partial M\times [0,\delta]$ on a tubular neighborhood of $\partial M$; then double $M$ across its boundary to get a closed manifold $\bar{M}$ carrying an isometric reflection $\rho: \bar{M}\to \bar{M}$ exchanging $M$ with its complement. Setting $\bar{N}=N+\rho_*N$ and recalling that $P$ meets $\partial M$ transversally, observe next that $\partial \bar{N}=P+\rho_*P=\bar{P}$, since $Q+\rho_*Q=0$. We can then apply the construction from \cite[Section 5]{ABOprescribed} to obtain a map $v\in \Lip_{loc}(\bar{M}\setminus (\bar{P}\cup \bar{S}); S^2)$ satisfying
$$*d(v^*(dA_{S^2}))=4\pi \bar{P}$$
and
$$|dv(x)|\leq \frac{C}{\dist(x,\bar{P}\cup \bar{S})}.$$
Restricting $v$ to $M\subset \bar{M}$ then gives the desired map.
\end{proof}

Now, fix an $(n-3)$-face $\Delta$ of $P$, and fix a small parameter $\delta>0$; as in \cite[Section 4]{PPSgamma}, we consider the subset $\Delta_{\delta}\subset \Delta$ consisting of points a distance $\geq \delta$ from the boundary and, for a constant $c>0$ to be chosen, identify the normal $c\delta$-tubular neighborhood $V_{\delta}(\Delta)$ with $\Delta_{\delta}\times B^3_{c\delta}$ by a normal exponential map which is almost an isometry for $\delta$ small.
We can fix $c$ so small that these neighborhoods $V_\delta(\Delta)$ have disjoint closures.

For $S$ as in Proposition \ref{abo.prop}, denote by $K$ the union of $S$ and the $(n-4)$-skeleton of $P$, and choose $C$ such that
$$B_{c\delta}(P)\setminus \bigcup_{\Delta}V_{\delta}(\Delta)\subset B_{C\delta}(K).$$
By a modification of the proof of \cite[Theorem 5.10]{ABOprescribed}, we can assume moreover that the map $v\in \Lip_{\loc}(M\setminus (P\cup S),S^2)$ agrees with the radial projection $(x,y)\to y/|y|$ on $V_{\delta}(\Delta).$

With this in mind, let $\rho(x):= \dist_{P\cup S}(x)$, and consider
$$\Phi_{\epsilon}(x):=\left(\frac{1}{\tanh(2\rho/\epsilon)}-\frac{\epsilon}{2\rho}\right)v.$$
Then, on each $V_{\delta}(\Delta)$, set
$$\tilde{A}_{\epsilon}(x):=\frac{1}{\epsilon}\left(\frac{1}{\sinh(2\rho/\epsilon)}-\frac{\epsilon}{2\rho}\right)\sum_{i=1}^3(v\times e_i)\,dx_i,$$
so that $(\Phi_{\epsilon},\tilde{A}_{\epsilon})$ matches the rescaled BPS monopole on the $B^3_{c\delta}$ factor of $V_{\delta}(\Delta)\cong \Delta_{\delta}\times B^3_{c\delta}$; and on $M\setminus (P\cup S)$, define $B_{\epsilon}:=\frac{1}{2}v^{-1}\,dv$ as in Proposition \ref{proposition: dAPhi = 0} with $\alpha = 0$, so that
$$d_{B_{\epsilon}}v=0$$
and
$$F_{B_{\epsilon}}=-\frac{1}{4}dv\wedge dv.$$
Next, let $\chi_K$ be a cut-off vanishing on $B_{C\delta}(K)$ and constantly equal to $1$ on $M\setminus B_{(C+1)\delta}(K)$, with $|d\chi_K|\leq C/\delta$ on the set $B_{(C+1)\delta}(K)$, which has volume $O(\delta^4)$. Let $\psi$ be another cut-off vanishing outside a $c\delta$-neighborhood of $P\cup S$ and constantly equal to $1$ on $B_{c\delta/2}(P\cup S)$. Note that $\tilde{A}_{\epsilon}$ is then well-defined on each component of $\spt(\psi)\cap \spt(\chi_K)$ and $B_{\epsilon}$ is well-defined on $\spt(\chi_K)\cap \spt(1-\psi)$. Defining
$$A_{\epsilon}:=\chi_K\cdot [\psi \tilde{A}_{\epsilon}+(1-\psi)B_{\epsilon}],$$
we claim that the following holds.

\begin{lemma} \label{lemma: Energy bound recovery}
Taking $\delta:=\epsilon^{3/4}$ in the preceding construction, the pair $(\Phi_{\epsilon},A_{\epsilon})$ above satisfies an estimate of the form
$$E_{\epsilon}(\Phi_{\epsilon},A_{\epsilon})\leq 4\pi \mathcal{H}^{n-3}(P)+C\epsilon^{1/32},$$
where $C$ is a constant depending on $P\cup S$. 
\end{lemma}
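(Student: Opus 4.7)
The plan is to decompose $M$ into three types of regions and bound the energy separately on each: (i) the tubular cores $V_\delta(\Delta)$, where $(\Phi_\epsilon,\tilde A_\epsilon)$ pulls back under the normal exponential parametrization $V_\delta(\Delta)\cong \Delta_\delta\times B^3_{c\delta}$ to the rescaled BPS monopole of Lemma \ref{lemma: BPS monopole} on each $3$-dimensional slice; (ii) the ``abelian'' region where $\chi_K=1$ and $\psi=0$, so that $A_\epsilon=B_\epsilon=\tfrac12 v^{-1}\,dv$ and $\Phi_\epsilon=c(\rho)v$ for $c(\rho):=\frac{1}{\tanh(2\rho/\epsilon)}-\frac{\epsilon}{2\rho}$; and (iii) the transition layers where $\psi$ or $\chi_K$ varies. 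The leading $4\pi\mathcal{H}^{n-3}(P)$ will come entirely from (i); everything else is an error to be controlled by a positive power of $\epsilon$.

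In the cores, Fubini together with Lemma \ref{lemma: BPS monopole} gives that each slice $\{x\}\times B^3_{c\delta}$ contributes $4\pi$ minus a tail outside $B^3_{c\delta}$ which, by the decay estimates $|d_{A_\epsilon}\Phi_\epsilon|$, $\epsilon|F_{A_\epsilon}|=O(\epsilon^2/|x|^2)$, is $O(\epsilon/\delta)$. Summing over faces and using $\mathcal{H}^{n-3}(\Delta\setminus\Delta_\delta)=O(\delta)$, the core contribution is bounded by $4\pi\mathcal{H}^{n-3}(P)+O(\delta+\epsilon/\delta)$, absorbing the $(1+O(\delta))$ error coming from comparing the Riemannian volume on $V_\delta(\Delta)$ with the product one. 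In the abelian region, Proposition \ref{proposition: dAPhi = 0} yields $d_{B_\epsilon}v=0$ and $F_{B_\epsilon}=-\tfrac14 dv\wedge dv$, whence $|d_{B_\epsilon}\Phi_\epsilon|=|c'(\rho)|=O(\epsilon/\rho^2)$ and $|F_{B_\epsilon}|\lesssim|dv|^2\lesssim \rho^{-2}$ via \eqref{eq: bound on dv}; integrating $e_\epsilon\lesssim\epsilon/\rho^4$ against the transverse volume element $\rho^2\,d\rho$ near each face of $P$ gives an $O(\epsilon/\delta)$ contribution.

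The two transition regions require different treatments. For the $\psi$-transition (an annular shell of thickness $\sim\delta$ at distance $\rho\sim\delta$ from $P$), a direct computation in tubular coordinates using $v^{-1}dv=-\rho^{-1}\sum(v\times e_i)\,dx_i$ shows that
$$\tilde A_\epsilon-B_\epsilon=\frac{1}{\epsilon\sinh(2\rho/\epsilon)}\sum(v\times e_i)\,dx_i = O(\epsilon^{-1}e^{-2\rho/\epsilon}),$$
which for $\rho\gtrsim\delta=\epsilon^{3/4}$ is super-polynomially small. Expanding
$$F_{A_\epsilon}=\psi F_{\tilde A_\epsilon}+(1-\psi)F_{B_\epsilon}+d\psi\wedge(\tilde A_\epsilon-B_\epsilon)-\psi(1-\psi)(\tilde A_\epsilon-B_\epsilon)\wedge(\tilde A_\epsilon-B_\epsilon),$$
the two ``mixing'' terms are therefore negligible, and the energy in the shell is absorbed into the core/abelian estimates.

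The main technical obstacle is the $\chi_K$-transition near the lower-dimensional set $K=S\cup (n\!-\!4)\text{-skeleton}(P)$. Since $\dim K\leq n-4$, the tube $B_{C\delta}(K)$ has volume $O(\delta^4)$; on the layer $B_{(C+1)\delta}(K)\setminus B_{C\delta}(K)$ where $d\chi_K\neq 0$, the bounds $|d\chi_K|\lesssim 1/\delta$ and $|\tilde A_\epsilon|,|B_\epsilon|\lesssim 1/\rho\lesssim 1/\delta$ give $|F_{A_\epsilon}|,\,|d_{A_\epsilon}\Phi_\epsilon|\lesssim 1/\delta$ and hence a contribution $O(\delta^2/\epsilon)$. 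On $B_{C\delta}(K)$ itself $A_\epsilon\equiv 0$ and $d_{A_\epsilon}\Phi_\epsilon=d\Phi_\epsilon$, with $|d\Phi_\epsilon|\lesssim\min(1/\epsilon,1/\rho)$ coming from $c(\rho)\lesssim\rho/\epsilon$ for $\rho\lesssim\epsilon$ and $c(\rho)\sim 1$ for $\rho\gtrsim\epsilon$; splitting according to whether $\rho<\epsilon$ or $\rho\geq\epsilon$ and using the refined volume bound $|B_\epsilon(P)\cap B_{C\delta}(\text{skel}(P))|=O(\delta\epsilon^3)$ (from the $1$-dimensional trace of $P$ in the $4$-dimensional transverse slice to the skeleton) yields a further $O(\delta+\delta^2/\epsilon)$. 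Balancing $\epsilon/\delta=\epsilon^{1/4}$ against $\delta^2/\epsilon=\epsilon^{1/2}$ at the prescribed scale $\delta=\epsilon^{3/4}$, all error terms are comfortably within the stated bound $C\epsilon^{1/32}$; the weak exponent $1/32$ simply affords slack for constants and bookkeeping in the precise implementation.
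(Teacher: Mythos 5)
Your overall strategy is the same as the paper's: the leading term $4\pi\mathcal{H}^{n-3}(P)$ comes from the BPS cores via Lemma \ref{lemma: BPS monopole} (with a $(1+C\delta)$ metric-comparison error from the almost-product structure of $V_\delta(\Delta)$), and the remaining regions --- the abelian region where $A_\epsilon=B_\epsilon$, the $\psi$-shell, and the neighborhood of $K$ --- are estimated by positive powers of $\epsilon$. Two differences are worth recording. First, your treatment of the $\psi$-transition is genuinely cleaner: the exact interpolation identity $F_{A_\epsilon}=\psi F_{\tilde A_\epsilon}+(1-\psi)F_{B_\epsilon}+d\psi\wedge(\tilde A_\epsilon-B_\epsilon)-\psi(1-\psi)(\tilde A_\epsilon-B_\epsilon)\wedge(\tilde A_\epsilon-B_\epsilon)$, combined with the correct explicit computation $\tilde A_\epsilon-B_\epsilon=\frac{1}{\epsilon\sinh(2\rho/\epsilon)}\sum_i(v\times e_i)\,dx_i$, makes the mixing terms exponentially small for $\rho\gtrsim\delta$, whereas the paper only uses $|d\psi|\le C/\delta$ and $|\tilde A_\epsilon|+|B_\epsilon|\le C/\rho$ there and settles for $O(\epsilon/\delta)$. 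Second, you keep $\delta=\epsilon^{3/4}$ as in the statement, while the paper's own proof switches to $\delta=\epsilon^{15/16}$ because it bounds the term $|\chi_K^2\psi^2-\chi_K\psi|\,|\tilde A_\epsilon|^2$ near $K$ crudely by $C\delta^8/\epsilon^7$ (which would diverge at $\delta=\epsilon^{3/4}$); with the sharper refined-volume estimates your choice does work and yields errors $O(\epsilon^{1/4})$, well inside $C\epsilon^{1/32}$.

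There is, however, a concrete gap in your treatment of the $\chi_K$-transition layer $B_{(C+1)\delta}(K)\setminus B_{C\delta}(K)$. You claim $|\tilde A_\epsilon|,|B_\epsilon|\lesssim 1/\rho\lesssim 1/\delta$ there, hence $|F_{A_\epsilon}|,|d_{A_\epsilon}\Phi_\epsilon|\lesssim 1/\delta$. This is false on the part of the layer lying in the thin tube around $P$: a point can be at distance $\sim C\delta$ from the $(n-4)$-skeleton while $\rho=\dist(\cdot,P\cup S)$ is arbitrarily small, and there $\psi=1$, $\chi_K\in(0,1)$, so $A_\epsilon=\chi_K\tilde A_\epsilon$ with $|\tilde A_\epsilon|\sim 1/\epsilon$ near $\rho\sim\epsilon$; consequently $|d\chi_K|\,|\tilde A_\epsilon|\sim 1/(\delta\epsilon)$, the quadratic term $(\chi_K-\chi_K^2)\,\tilde A_\epsilon\wedge\tilde A_\epsilon$ has size $\sim 1/\epsilon^2$, and $|d_{A_\epsilon}\Phi_\epsilon|$ also reaches $\sim 1/\epsilon$ --- all far larger than your claimed $1/\delta$. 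The repair is exactly the device you already deploy on $B_{C\delta}(K)$ (and which the paper uses for its $d\chi_K$-term): insert the sharp bound $|\tilde A_\epsilon|\le C\epsilon^{-1}\min\{\rho/\epsilon,\epsilon/\rho\}$, split the layer at $\rho=\epsilon$, and use the volume bound $|\{\rho\le\epsilon\}\cap B_{(C+1)\delta}(K)|=O(\delta\epsilon^3)$ together with the coarea bound $\mathcal{H}^{n-1}(\{\rho=t\}\cap B_{(C+1)\delta}(K))\le C\delta t^2$. This gives the layer a total contribution $O(\delta+\delta^2/\epsilon+\epsilon)$, after which your error budget at $\delta=\epsilon^{3/4}$, and hence the stated bound with exponent $1/32$, goes through.
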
 

Before beginning the proof, we record a few elementary estimates that will be of use during the proof.

\begin{lemma}\label{calc.lem}
    There is a constant $C>0$ such that, for any $t\in (0,\infty)$, we have
    \begin{equation}
        \left\vert \frac{1}{\tanh(t)}-\frac{1}{t} \right\vert \leq 1
    \end{equation}
    and
    \begin{equation}
        \left\vert \frac{1}{\sinh(t)}-\frac{1}{t} \right\vert \leq C\min\{t,1/t\},
    \end{equation}
    as well as 
    \begin{equation}
        \left\vert \frac{1}{\sinh(t)^2}-\frac{1}{t^2}\right\vert \leq C\min\{1,1/t^2\}.
    \end{equation}
\end{lemma}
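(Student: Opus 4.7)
The plan is to verify each of the three estimates by separating the analysis near $t=0$ (where one uses the Taylor expansions of $\sinh$ and $\tanh$) from the behavior for large $t$ (where the hyperbolic functions decay like $2e^{-t}$).

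For the first inequality, I would set $f(t):=\coth(t)-1/t$ and show it is monotonically increasing from $0$ to $1$. The Taylor expansion $\coth(t)=1/t+t/3+O(t^3)$ near zero gives $f(0^+)=0$, while $\coth(t)\to 1$ as $t\to\infty$ gives $f(\infty)=1$. Monotonicity follows from $f'(t)=1/t^2-\operatorname{csch}^2(t)>0$, which is a consequence of the elementary inequality $\sinh(t)>t$ for $t>0$. This gives $0<f(t)<1$ on $(0,\infty)$.

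For the second inequality, I would write
\[
\left|\frac{1}{\sinh(t)}-\frac{1}{t}\right|=\frac{\sinh(t)-t}{t\sinh(t)},
\]
which is nonnegative since $\sinh(t)\geq t$. For $t\leq 1$ I use $\sinh(t)-t\leq Ct^3$ (from Taylor's theorem with remainder) together with $t\sinh(t)\geq t^2$ to obtain a bound $\leq Ct$. For $t\geq 1$ I simply bound $\sinh(t)-t\leq\sinh(t)$ to get $\leq 1/t$. Combining the two regimes gives the bound $C\min\{t,1/t\}$. The third inequality is handled in exactly the same manner, starting from
\[
\left|\frac{1}{\sinh^2(t)}-\frac{1}{t^2}\right|=\frac{\sinh^2(t)-t^2}{t^2\sinh^2(t)}\geq 0,
\]
and using $\sinh^2(t)-t^2\leq Ct^4$ together with $t^2\sinh^2(t)\geq t^4$ for $t\leq 1$, and $\sinh^2(t)-t^2\leq\sinh^2(t)$ for $t\geq 1$ to bound the ratio by $1/t^2$.

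The lemma is essentially a routine calculus exercise, so I do not expect any real obstacles; the only mild care needed is to confirm that the inequality $\sinh(t)\geq t$ (used repeatedly to get nonnegativity and to bound the denominators from below) holds on all of $(0,\infty)$, which follows immediately from integrating $\cosh(s)\geq 1$.
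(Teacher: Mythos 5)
Your proof is correct: the paper states this lemma without proof, treating it as a routine calculus fact, and your two-regime argument (Taylor bounds $\sinh(t)-t\le Ct^3$, $\sinh^2(t)-t^2\le Ct^4$ and $\tanh(t)\le t\le \sinh(t)$ for small $t$; the trivial bounds $\sinh(t)-t\le\sinh(t)$ and monotone limits of $\coth(t)-1/t$ for large $t$) is exactly the kind of verification the authors leave implicit. No gaps to flag.
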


\begin{proof}[Proof of Lemma \ref{lemma: Energy bound recovery}] 
We will show that most energy is contained in the region $\{\chi_K\equiv 1\}\cap\{\psi \equiv 1\}$, where the pair coincides with cylinders over rescaled BPS monopoles.

To begin, write
$$d_{A_{\epsilon}}\Phi_{\epsilon}=(1-\chi_K)d\Phi_{\epsilon}+\chi_K[\psi d_{\tilde{A}_{\epsilon}}\Phi_{\epsilon}+(1-\psi)d_{B_{\epsilon}}\Phi_{\epsilon}].$$
Since $d_{B_{\epsilon}}v=0$ on the support of $1-\psi$ by construction, we see that here
\begin{align*}
    |d_{B_{\epsilon}}\Phi_{\epsilon}|& = \left|d\left(\frac{1}{\tanh(2\rho/\epsilon)}-\frac{\epsilon}{2\rho}\right) \right| \leq \frac{2}{\epsilon} \left|\frac{\epsilon^2}{(2\rho)^2}-\frac{1}{\sinh(2\rho/\epsilon)^2}\right| \leq C\epsilon\rho^{-2},    
\end{align*}
and therefore
\begin{eqnarray*}
\frac{1}{\epsilon}\int_M \chi_K(1-\psi)|d_{B_{\epsilon}}\Phi_{\epsilon}|^2 \leq  C\epsilon\int_{M\setminus B_{\delta/2}(P\cup S)}\rho^{-4} \leq C\epsilon\delta^{-1}, 
\end{eqnarray*}
where $C$ is a constant depending on $P\cup S$. Moreover, we have
\begin{align*}
|d\Phi_{\epsilon}|&\leq \frac{C}{\epsilon}\left|\frac{\epsilon^2}{(2\rho)^2}-\frac{1}{\sinh(2\rho/\epsilon)^2}\right|+C \left|\frac{1}{\tanh(2\rho/\epsilon)}-\frac{\epsilon}{2\rho} \right|\rho^{-1}\\
&\leq \frac{C}{\epsilon}\min\{1,\epsilon^2/\rho^2\}+C/\rho\\
&\leq C\min\{1/\epsilon,1/\rho\},
\end{align*}
where we used \eqref{eq: bound on dv} to bound $\vert dv \vert$, and therefore
\begin{eqnarray*}
    \int_{M} \frac{1}{\epsilon}(1-\chi_K)|d\Phi_{\epsilon}|^2 \leq \int_{B_{(C+1)\delta}(K)} \frac{C}{\epsilon\rho^2} \leq C\frac{\delta^2}{\epsilon}, 
\end{eqnarray*}
where the last bound can be checked using the coarea formula and the fact that the level set
$B_{(C+1)\delta}(K)\cap\{\rho=t\}$ has area at most $C\delta t^2$.

Putting these estimates together, we see that
\begin{equation}\label{dir.est}
    \int_M\frac{1}{\epsilon}|d_{A_{\epsilon}}\Phi_{\epsilon}|^2\leq (1+C\sqrt{\epsilon \delta^{-1}+\epsilon^{-1}\delta^2})\frac{1}{\epsilon}\int_M(\chi_K\psi)^2|d_{\tilde{A}_{\epsilon}}\Phi_{\epsilon}|^2+C[\epsilon\delta^{-1}+\epsilon^{-1}\delta^2],
\end{equation}
giving the desired behavior for $\delta=\delta_{\epsilon}$ in the regime $\epsilon \ll \delta \ll \sqrt{\epsilon}$.

For the curvature estimate, first note that
\begin{equation} \label{equation: estimate for B and dB}
    |dB_{\epsilon}|+|B_{\epsilon}|^2\leq C|dv|+C|dv|^2\leq C\rho^{-2},
\end{equation}
where the last inequality follows from Proposition \ref{abo.prop}, and, after applying Lemma \ref{calc.lem}, 
\begin{equation} \label{equation: estimate for barA}
   |\tilde{A}_{\epsilon}|\leq \frac{C}{\epsilon}\min\{\rho/\epsilon, \epsilon/\rho\}. 
\end{equation}
Expanding $F_{A_{\epsilon}}$ pointwise and applying Cauchy-Schwarz gives an estimate of the form
\begin{align} \label{equation: estimate for FAeps}
\begin{aligned}
    |F_{A_{\epsilon}}| &\leq C|d\chi_K|(\psi|\tilde{A}_{\epsilon}|+(1-\psi)|B_{\epsilon}|)\\
    &\quad+C\chi_K|d\psi|(|\tilde{A}_{\epsilon}|+|B_{\epsilon}|)\\
    &\quad+C\chi_K(1-\psi)(|dB_{\epsilon}|+|B_{\epsilon}|^2)\\
    &\quad+\chi_K\psi(1-\psi)|\tilde A_\epsilon||B_\epsilon|+|\chi_K^2\psi^2-\chi_K\psi||\tilde A_\epsilon|^2+\chi_K\psi |F_{\tilde{A}_{\epsilon}}|;
\end{aligned}
\end{align} 
we will show that all terms but the last one on the right-hand side are $o(\epsilon^{-1/2})$ in $L^2$. We will analyze each term separately. Since $|d\chi_K|\leq C/\delta$ and $d\chi_K$ is supported on a $(C+1)\delta$-neighborhood of $K$, we use the preceding estimates for $B_{\epsilon}$ and $\tilde{A}_{\epsilon}$ to check directly that
\begin{align*}
    \int_M \epsilon |d\chi_K|^2(\psi^2|\tilde{A}_{\epsilon}|^2+(1-\psi)^2|B_{\epsilon}|^2) \leq &\frac{C\epsilon}{\delta^2}\int_{B_{(C+1)\delta}(K)}(\epsilon^{-2}\min\{\rho/\epsilon,\epsilon/\rho\}^2+C\delta^{-2})\\
    \leq & C\frac{\epsilon}{\delta}+C\epsilon\leq C\frac{\epsilon}{\delta}, 
\end{align*} 
where the first inequality follows form \eqref{equation: estimate for B and dB} and \eqref{equation: estimate for barA}.

Next, note that $|d\psi|$ is bounded by $\frac{C}{\delta}$ and supported in the annular region $c\delta/2\leq \rho \leq c\delta$, so that
$$|d\psi|^2(|\tilde{A}_{\epsilon}|^2+|B_{\epsilon}|^2)\leq \frac{C}{\delta^2}\delta^{-2}=C\delta^{-4},$$ where we used again the bounds \eqref{equation: estimate for B and dB} and \eqref{equation: estimate for barA}. Integrating then over $\spt(d\psi)$, which has volume of the order $\delta^3$, gives
$$\epsilon \int_{M \cap \spt(d\psi)} |d\psi|^2(|\tilde{A}_{\epsilon}|^2+|B_{\epsilon}|^2)\leq \frac{C\epsilon}{\delta}.$$
For the third line of \eqref{equation: estimate for FAeps}, we have
$$\chi_K(1-\psi)(|dB_{\epsilon}|+|B_{\epsilon}|^2)\leq C(1-\psi)\rho^{-2}$$
and, since $1-\psi$ is supported on $\{\rho\geq c\delta/2\}$, integration gives
$$\epsilon \int_M \chi_K^2(1-\psi)^2(|dB_{\epsilon}|+|B_{\epsilon}|^2)^2\leq C\epsilon \int_{\{\rho\geq c\delta/2\}}\rho^{-4}\leq C\frac{\epsilon}{\delta}.$$
Finally, note that
$$|\tilde{A}_{\epsilon}||B_\epsilon|+|\tilde A_\epsilon|^2\leq \frac{C}{\rho^2}\quad\text{on }\{\chi_K>0,\ \psi\neq1\},$$
so similarly, multiplying by $\epsilon$ and integrating, we get as before
$$\epsilon\int_{\{\psi\neq1\}} [\chi_K\psi(1-\psi)|\tilde A_\epsilon||B_\epsilon|]^2\leq \frac{C\epsilon}{\delta},$$
while
$$\epsilon\int_{\{\psi=1\}}[|\chi_K^2\psi^2-\chi_K\psi||\tilde A_\epsilon|^2]^2
\le \epsilon\int_{B_{(C+1)\delta}(K)}\epsilon^{-4}\min\{\rho/\epsilon,\epsilon/\rho\}^4
\le C\frac{\delta^4}{\epsilon^3}\cdot\frac{\delta^4}{\epsilon^4}
= C\frac{\delta^8}{\epsilon^7}. $$

Putting the preceding estimates together, we deduce that
\begin{equation}\label{f.est}\epsilon \int_M |F_{A_{\epsilon}}|^2\leq (1+C\sqrt{\epsilon/\delta+\delta^8/\epsilon^7})\epsilon \int_M \chi_K^2\psi^2 |F_{\tilde{A}_{\epsilon}}|^2+C(\epsilon/\delta+\delta^8/\epsilon^7).
\end{equation}
In particular, taking $\delta:=\epsilon^{15/16}$ and combining this estimate with \eqref{dir.est}, we see that
$$E_{\epsilon}(\Phi_{\epsilon},A_{\epsilon})\leq (1+C\epsilon^{1/32})\int_M\chi_K^2\psi^2\left(\frac{1}{\epsilon}|d_{\tilde{A}_{\epsilon}} \Phi_\epsilon|^2+\epsilon|F_{\tilde{A}_{\epsilon}}|^2\right)+C\epsilon^{1/16}.$$
By construction, the pair $(\Phi_{\epsilon},\tilde{A}_{\epsilon})$ coincides on $V_{\delta}(\Delta)$ with the product of the $\epsilon$-rescaled BPS monopole on $\mathbb{R}^3$ and $\Delta$, so we see that
$$\int_M \chi_K^2\psi^2\left(\frac{1}{\epsilon}|d_{\tilde{A}_{\epsilon}} \Phi_\epsilon|^2+\epsilon|F_{\tilde{A}_{\epsilon}}|^2\right)\leq 4\pi\sum_{\Delta}\mathcal{H}^{n-3}(\Delta)+C\delta, $$ after applying Lemma \ref{lemma: BPS monopole}, where the last error term comes from the fact that $V_\delta(\Delta)$
is almost isometric to a product. Hence,
$$E_{\epsilon}(\Phi_{\epsilon},A_{\epsilon})\leq 4\pi\mathcal{H}^{n-3}(P)+C\epsilon^{1/32},$$
as claimed.
\end{proof}

\begin{remark}\label{onephi.est}
Moreover, note that the section $\Phi_{\epsilon}$ satisfies $|\Phi_\epsilon|\le 1$ and
$$1-|\Phi_{\epsilon}|=\frac{\epsilon}{2\rho}-\frac{e^{-2\rho/\epsilon}}{\sinh(2\rho/\epsilon)}\le\frac{\epsilon}{2\rho},$$
so that a simple application of the coarea formula yields the integral estimates
$$\int_M (1-|\Phi_{\epsilon}|)\leq C\epsilon$$
and
\begin{equation}\label{tozero}\int_M (1-|\Phi_{\epsilon}|)^2\leq C\epsilon^2,\end{equation}
and in fact
$$\lim_{r\to0}\limsup_{\epsilon\to0}\int_{\{\rho<r\}} \frac{(1-|\Phi_{\epsilon}|)^2}{\epsilon^2}\le C_0\mathcal{H}^{n-3}(P)$$
for an absolute constant $C_0$. Given $\eta>0$ small, let $\varphi_\eta:\R\to\R$ smooth
with $\varphi_\eta(t)=1$ for $t\le 1-\eta$, $\varphi_\eta(t)=1/t$ for $t\ge 1-\eta^2$, and $\varphi_\eta\le1+2\eta^2$, as well as $|\varphi_\eta'|\le 2\eta$.
Then, taking
$$\tilde\Phi_{\epsilon}:=\varphi_\eta(|\Phi_\epsilon|)\Phi_\epsilon,$$
it is easy to check that $E_\epsilon(\tilde\Phi_\epsilon,A_\epsilon)\le(1+C\eta)E_\epsilon(\Phi_\epsilon,A_\epsilon),$
as well as
$$\limsup_{\epsilon\to0}\int_{M} \frac{(1-|\tilde\Phi_{\epsilon}|)^2}{\epsilon^2}\le C_0'\mathcal{H}^{n-3}(P),$$
since for any fixed $r>0$ we have $1-|\Phi_\epsilon|\le\frac{\epsilon}{2r}$ on $\{\rho\ge r\}$,
and thus $1-|\tilde\Phi_\epsilon|=0$ here, once $\epsilon\le2\eta^2r$. Finally, using \eqref{tozero},
we can see that $Z(\tilde\Phi_\epsilon,A_\epsilon)$ has the same limit as $Z(\Phi_\epsilon,A_\epsilon)$,
by an argument entirely analogous to the one used to show \eqref{same.lim}.
Hence, by a standard diagonal argument, up to replacing $(\Phi_\epsilon,A_\epsilon)$
with a new pair, we can also guarantee that
$$\limsup_{\epsilon\to0}\int_{M} \frac{(1-|\tilde\Phi_{\epsilon}|)^2}{\epsilon^2}\le C_0'\mathcal{H}^{n-3}(P).$$
\end{remark}

We can now complete the proof of Theorem \ref{thm: recovery} in a few lines.

\begin{proof}[Proof of Theorem \ref{thm: recovery}]
As discussed before, given a relative integral $(n-3)$-boundary $T$ in $M$, the results of \cite[Section 4.2]{Federer} imply the existence of a sequence of polyhedral relative $(n-3)$-boundaries $P_j$ in $M$ such that $P_j\to T$ in the flat topology and
$$\lim_{j\to\infty}\mathbb{M}(P_j)=\mathbb{M}(T).$$
Without loss of generality, we can assume moreover that $P_j$ has multiplicity one on each $(n-3)$-face, again by, for instance, \cite[Proposition 8.6]{ABO}. Then, for each $P_j$, Lemma \ref{lemma: Energy bound recovery} and Remark \ref{onephi.est} supply for each $\epsilon\in (0,\epsilon_j)$ a pair $(\Phi^j_{\epsilon},A^j_{\epsilon})$ satisfying
$$E_{\epsilon}(\Phi_{\epsilon}^j,A_{\epsilon}^j)\leq 4\pi \mathbb{M}(P_j)+\frac{1}{j},$$
$$\mathbb{F}(4\pi P_j-Z(\Phi_{\epsilon}^j,A_{\epsilon}^j))\leq \frac{1}{j},$$
and
$$\int_M\frac{(1-|\Phi_{\epsilon}^j|)^2}{\epsilon^2}\leq C\mathbb{M}(P_j).$$
Setting $(\Phi_{\epsilon},A_{\epsilon}):=(\Phi_{\epsilon}^j,A_{\epsilon}^j)$ for $\epsilon \in [\epsilon_{j+1},\epsilon_j)$, it then follows that the family $(\Phi_{\epsilon},A_{\epsilon})$ satisfies the conclusions of Theorem \ref{thm: recovery}.

\end{proof}

\section{Approximating the Plateau problem}\label{plat.sec}

In this section, we conclude with the proof of Theorem \ref{plateau.thm}. Throughout, we assume without loss of generality that $M\subset N$ is a domain in the interior of a larger compact $n$-manifold $N$, such that the nearest-point projection $P:V\to \partial M$ is well-defined and smooth on the complement $V:=N\setminus M$. In this setting, we observe next that pairs $(\Phi,A)$ on $M$ and $V$ with matching Dirichlet data can be glued continuously across $\partial M$ after a change of gauge.

\begin{lemma}\label{paste.lem}
Given a pair $\Phi\in C^{\infty}(M;\mathfrak{su}(2))$ and $A\in \Omega^1(M;\mathfrak{su}(2))$ on $M$, and a pair $\Phi'\in C^{\infty}(V;\mathfrak{su}(2))$ and $A'\in \Omega^1(V;\mathfrak{su}(2))$ on $V$ such that $$\iota_{\partial M\hookrightarrow M}^*(\Phi,A)=\iota_{\partial M\hookrightarrow V}^*(\Phi',A'),$$
there exists a Lipschitz continuous pair $(\hat{\Phi},\hat{A})$ on $N$ such that $(\hat{\Phi},\hat{A})$ agrees with $(\Phi,A)$ on $M$ and $(\Phi',A')$ on $V$ after changes of gauge.
\end{lemma}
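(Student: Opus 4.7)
The plan is to change the gauge of $(\Phi',A')$ on $V$ near $\partial M$ so that the glued pair matches across $\partial M$ as a full pair, not merely along tangential directions. I will work in a tubular neighborhood $U$ of $\partial M$ in $N$ with coordinates $(t,x)$, where $t$ is the signed distance to $\partial M$ (positive on $V$, zero on $\partial M$). By hypothesis, $\Phi|_{\partial M}=\Phi'|_{\partial M}$ and the tangential components of $A$ and $A'$ coincide on $\partial M$, so the only obstruction to continuous gluing is the possible mismatch of the normal components, captured by the smooth section $\xi(x):=(A'-A)(\partial_t)(x)$ of $\mathfrak{su}(2)$ along $\partial M$.

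To absorb $\xi$ into a gauge change, I set
$$\sigma(t,x):=\exp\!\bigl(\chi(t)\,t\,\xi(x)\bigr)\quad\text{on }U\cap V,\qquad \sigma\equiv\Id\quad\text{on }V\setminus U,$$
where $\chi$ is a smooth cutoff with $\chi(0)=1$ supported in a small neighborhood of $0$. Since $\xi$ takes values in $\Im(\mathbb{H})\cong\mathfrak{sp}(1)$, the exponential lives in $\Sp(1)$, and $\sigma$ extends smoothly to all of $V$ because $\chi(t)\,t$ is compactly supported in $t$. Define the gauge-changed pair on $V$ by
$$\tilde\Phi':=\sigma\Phi'\sigma^{-1},\qquad \tilde A':=-(d\sigma)\sigma^{-1}+\sigma A'\sigma^{-1}.$$
From $\sigma(0,x)=\Id$ (so $\partial_{x_i}\sigma(0,x)=0$) and $\partial_t\sigma(0,x)=\chi(0)\,\xi(x)=\xi(x)$, a direct computation gives $\tilde\Phi'|_{\partial M}=\Phi'|_{\partial M}=\Phi|_{\partial M}$, the tangential part of $\tilde A'|_{\partial M}$ agrees with that of $A|_{\partial M}$, and
$$\tilde A'(\partial_t)|_{\partial M}=-\xi(x)+A'(\partial_t)|_{\partial M}=A(\partial_t)|_{\partial M}.$$

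Finally, I set $(\hat\Phi,\hat A):=(\Phi,A)$ on $M$ and $(\hat\Phi,\hat A):=(\tilde\Phi',\tilde A')$ on $V$. Each piece is smooth up to $\partial M$, and the section together with the full one-form match across $\partial M$, so $(\hat\Phi,\hat A)$ is continuous on $N$; since its derivatives are uniformly bounded on each closed half, the glued pair is Lipschitz. On $M$ the new pair coincides with $(\Phi,A)$ (trivial gauge change), and on $V$ it differs from $(\Phi',A')$ by the smooth gauge transformation $\sigma$, as required. I do not anticipate any serious obstacle: the only mildly delicate point is ensuring $\sigma$ genuinely extends smoothly to all of $V$, which is automatic from the compact support of $\chi(t)\,t$ in the $t$-variable. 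Since Lipschitz regularity is all that is required, no matching of normal derivatives is needed.
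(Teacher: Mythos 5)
Your proof is correct and follows essentially the same strategy as the paper: a gauge transformation of the form $\exp(\text{(distance)}\cdot\xi)$, equal to the identity on $\partial M$ so the tangential data is untouched, whose normal derivative is chosen to fix the only possible mismatch, namely the normal component of the connection. The sole (cosmetic) difference is that you gauge only the $V$-side to make $A'(\partial_t)$ match $A(\partial_t)$, whereas the paper gauges both sides to make both normal components vanish on $\partial M$; the mechanism and the verification are the same.
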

\begin{proof}
Since $\iota_{\partial M}^*(\Phi,A)=\iota_{\partial M}^*(\Phi',A')$ by assumption, the result would be immediate, with no gauge changes required, if we also had $A(\nu)=A'(\nu)$ along $\partial M$, where $\nu$ is the outward unit normal to $\partial M$. We claim that this can always be arranged after a change of gauge. In particular, we can find smooth maps $g: M\to \SU(2)$ and $h: V\to \SU(2)$ with $g=h=1$ on $\partial M$, such that the gauge-transformed pairs $(\Phi^g,A^g)$ and $((\Phi')^h,(A')^h)$ satisfy $A^g(\nu)=(A')^h(\nu)=0$ on $\partial M$, and can therefore be patched together to give a Lipschitz pair $(\hat{\Phi},\hat{A})$ on $N$.

We briefly explain how to construct the desired map $g: M\to \SU(2)$; the construction of $h: V\to \SU(2)$ is identical. Let $U\subset M$ be a tubular neighborhood of $\partial M$ on which the nearest-point projection $P: U\to\partial M$ is well-defined and smooth, and fix a cut-off function $\chi\in C_c^{\infty}(U)$ such that $\chi(x)=\dist_{\partial M}(x)$ on a smaller tubular neighborhood of $\partial M$. In the reference gauge, let $\phi: \partial M\to \mathfrak{su}(2)$ be given by $\phi(x):=-\langle A(x),\nu\rangle$, and define $g: M\to \SU(2)$ by the matrix exponential
    $$g(x):=\exp(\chi(x)\phi(P(x))).$$
Then $g$ satisfies $g=1$ on $\partial M\cup (M\setminus U)$, and 
    $$-\frac{\partial g}{\partial \nu}(x)=\left.\frac{\partial}{\partial r}e^{r\phi(P(x))}\right|_{r=0}=\phi(x).$$
In particular, it follows that, on $\partial M$,
    $$\frac{\partial g^{-1}}{\partial \nu}=-\frac{\partial g}{\partial\nu}=-\langle A(x),\nu\rangle,$$
and so, recalling throughout that $g=1$ on $\partial M$, we have
$$\langle A^g,\nu\rangle=\langle A+dg^{-1},\nu\rangle=0,$$
as desired.
\end{proof}

Now, fix $\Gamma^{n - 4} \subset \partial M$ to be a smooth $(n - 4)$-dimensional submanifold of $\partial M$ such that $[\Gamma] = 0 \in H_{n - 4}(M; \mathbb{Z})$. For technical reasons, it is useful to introduce the submanifold $\hat{\Gamma}$ in $V$ given by
$$\hat{\Gamma}:=P^{-1}(\Gamma),$$
where $P: V\to \partial M$ is the nearest-point projection. 

In particular, observe that if $S\in \mathcal{I}_{n-3}(M;\mathbb{Z})$ is any integral $(n-3)$-current such that $\partial S=\Gamma$ and $[S]=0\in H_{n-3}(M,\partial M;\mathbb{Z})$, then
$$\hat{S}:=S+\hat{\Gamma}\in \mathcal{I}_{n-3}(N;\mathbb{Z})$$
defines a relative $(n-3)$-boundary in the larger manifold $N$, for a suitable orientation of $\hat\Gamma$. In particular, we can apply Theorem \ref{thm: recovery} to the current $\hat{S}$ in $N$, to obtain the following.

\begin{lemma} \label{lemma: boundary approximation plateau}
    For any integral $(n-3)$-current $S$ in $M$ with $\partial S=\Gamma$, there exists a family of pairs $\Phi^S_{\epsilon}: N \to \mathfrak{su}(2)$ and $A^S_{\epsilon}\in \Omega^1(N;\mathfrak{su}(2))$ such that $|\Phi_{\epsilon}^S|\leq 1$,  
    $$Z(\Phi^S_{\epsilon},A^S_{\epsilon})\rightharpoonup^* 4\pi \hat{S},$$
    $$\limsup_{\epsilon\to 0}\int_M\frac{(1-|\Phi^S_{\epsilon}|)^2}{\epsilon^2}\leq C\mathbb{M}(\hat{S}),$$
    and
    $$\lim_{\epsilon\to 0}E_{\epsilon}(\Phi_{\epsilon}^S,A_{\epsilon}^S)=4\pi \mathbb{M}(\hat{S}).$$
\end{lemma}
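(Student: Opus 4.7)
My plan is to derive the lemma as an immediate application of Theorem \ref{thm: recovery} to the current $\hat S=S+\hat\Gamma$ inside the enlarged ambient manifold $N$, exploiting the fact that, by construction of $\hat\Gamma$ and the remark preceding the lemma, $\hat S$ is a relative integral $(n-3)$-boundary in $N$.

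First I would verify the two hypotheses of Theorem \ref{thm: recovery} for the pair $(N,\hat S)$. The support condition $\spt(\partial \hat S)\subset\partial N$ follows from the product structure of the tubular collar $V\cong\partial M\times[0,1]$ given by the nearest-point projection $P$: with the appropriate orientation, $\hat\Gamma$ corresponds to $\Gamma\times[0,1]$, so $\partial\hat\Gamma=\Gamma'-\Gamma$, where $\Gamma'\subset\partial N$ is the copy of $\Gamma$ sitting on the outer boundary, giving $\partial\hat S=\partial S+\partial\hat\Gamma=\Gamma'$. The homological condition $[\hat S]=0\in H_{n-3}(N,\partial N;\mathbb{Z})$ follows from the observation quoted in the excerpt: if $R$ is an $(n-2)$-chain in $M$ witnessing $[S]=0\in H_{n-3}(M,\partial M;\mathbb{Z})$, then extending $R$ across the collar $V$ by the product chain produces an $(n-2)$-chain in $N$ whose boundary equals $\hat S$ modulo $\partial N$.

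Then I would apply Theorem \ref{thm: recovery} on $N$ to produce smooth pairs $(\Phi^S_\epsilon,A^S_\epsilon)$ on $N$ with $|\Phi^S_\epsilon|\le 1$, satisfying $Z(\Phi^S_\epsilon,A^S_\epsilon)\rightharpoonup^* 4\pi \hat S$ as $(n-3)$-currents in $N$ and $\lim_{\epsilon\to 0}E_\epsilon(\Phi^S_\epsilon,A^S_\epsilon)=4\pi \mathbb{M}(\hat S)$. These give the first, second, and fourth conclusions of the lemma directly, while the remaining bound follows by dividing the estimate $\int_N (1-|\Phi^S_\epsilon|)^2/\epsilon\le C\mathbb{M}(\hat S)\epsilon$ from Theorem \ref{thm: recovery} by $\epsilon$ and restricting the integral to $M\subset N$, which yields $\limsup_{\epsilon\to 0}\int_M(1-|\Phi^S_\epsilon|)^2/\epsilon^2\le C\mathbb{M}(\hat S)$ as required. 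The main obstacle I anticipate is purely one of bookkeeping: carefully tracking the orientations of $\hat\Gamma$ and $\Gamma'$, and exhibiting the $(n-2)$-chain that witnesses $[\hat S]=0$ in $N$; with these in hand the reduction to Theorem \ref{thm: recovery} is essentially automatic.
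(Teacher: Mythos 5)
Your proposal is correct and is exactly the paper's argument: the lemma is obtained by applying Theorem \ref{thm: recovery} to $\hat S=S+\hat\Gamma$ in the enlarged manifold $N$, after noting (as you verify via the collar structure of $V$ and the product chain over the witnessing $(n-2)$-chain) that $\hat S$ is a relative integral $(n-3)$-boundary with $\spt(\partial\hat S)\subset\partial N$. Your deduction of the $\epsilon^{-2}$-weighted bound on $M$ from the estimate $\int_N(1-|\Phi^S_\epsilon|)^2/\epsilon\le C\mathbb{M}(\hat S)\epsilon$ is likewise the intended reading of that conclusion.
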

Moreover, if $S$ agrees with $P^{-1}(\Gamma)$ near $\partial M$ in Lemma \ref{lemma: boundary approximation plateau}, thus meeting $M$ transversally, then the pairs $(\Phi_{\epsilon}^S,A_{\epsilon}^S)$ have no energy concentration along $\partial M$, and it follows in particular that
\begin{equation}\label{trv.cond}
\lim_{\epsilon\to 0}\|Z(\Phi_{\epsilon}^S,A_{\epsilon}^S)-4\pi \hat{\Gamma}\|_{C^1(V)^*}=0
\end{equation}
and
\begin{equation}\label{m.ener.cond}
\lim_{\epsilon\to 0}E_{\epsilon}(\Phi_{\epsilon}^S,A_{\epsilon}^S; M)=4\pi \mathbb{M}(S). 
\end{equation}
Now, consider a sequence $S_j\in \mathcal{I}_{n-3}(M;\mathbb{Z})$ satisfying $\partial S_j=\Gamma$ and
$$\mathbb{M}(S_j)< \inf\{\mathbb{M}(S)\mid S\in \mathcal{I}_{n-3}(M),\text{ }\partial S=\Gamma\}+\frac{1}{j}.$$
Without loss of generality, we can assume moreover that each $S_j$ meets $\partial M$ transversally as above, though of course we cannot enforce this in a uniform way as $j\to\infty$ without some convexity assumption on $M$. Applying Lemma \ref{lemma: boundary approximation plateau} to the sequence $S_j$, it follows that there exist $\epsilon_j>0$ and pairs $(\Phi_{\epsilon}^j,A_{\epsilon}^j)$ on $N$ such that for $\epsilon<\epsilon_j$
\begin{equation}\label{ener.glob}
    E_{\epsilon}(\Phi_{\epsilon}^j,A_{\epsilon}^j)+\int_N\frac{(1-|\Phi_{\epsilon}^j|)^2}{\epsilon^2}\leq C \mathbb{M}(\hat{S}_j)
\end{equation}
and, by \eqref{trv.cond} and \eqref{m.ener.cond},
\begin{equation}
    \|Z(\Phi_{\epsilon}^j,A_{\epsilon}^j)\mres V-4\pi \hat{\Gamma}\|_{C^1(N)^*}<\frac{1}{j},
\end{equation}
as well as
\begin{equation}\label{ener.m}
    E_{\epsilon}(\Phi_{\epsilon}^j,A_{\epsilon}^j; M)\leq 4\pi\inf\{\mathbb{M}(S)\mid S\in \mathcal{I}_{n-3}(M),\text{ }\partial S=\Gamma\}+\frac{4\pi}{j}.
\end{equation}
Replacing $\epsilon_j$ with $\min\{\epsilon_1,\ldots,\epsilon_j,\frac{1}{j}\}$ if necessary, we can assume moreover that $\epsilon_j$ is a decreasing sequence with $\lim_{j\to\infty}\epsilon_j=0$. We then define the pairs $\Psi_{\epsilon}\in C^{\infty}(\partial M,\mathfrak{su}(2))$, $B_{\epsilon}\in\Omega^1(\partial M;\mathfrak{su}(2))$ by
\begin{equation}\label{dir.def}
(\Psi_{\epsilon},B_{\epsilon}):=\iota_{\partial M}^*(\Phi_{\epsilon}^j,A_{\epsilon}^j)\text{ for }\epsilon \in (\epsilon_j,\epsilon_{j+1}].
\end{equation}
We then assert that the conclusion of Theorem \ref{plateau.thm} holds for this choice of $(\Psi_{\epsilon},B_{\epsilon}).$

\begin{proposition}\label{plat.prop}
    For $(\Psi_{\epsilon},B_{\epsilon})$ given by \eqref{dir.def} above, setting
   $$\alpha_{\epsilon}(\Psi_\epsilon, B_\epsilon):=\inf\{E_{\epsilon}(\Phi,A)\mid \iota_{\partial M}^*(\Phi,A)=(\Psi_{\epsilon},B_{\epsilon})\},$$
    we have
    $$\lim_{\epsilon\to 0}\alpha_{\epsilon}(\Psi_\epsilon, B_\epsilon)=4\pi \inf\{\mathbb{M}(T)\mid T\in \mathcal{I}_{n-3}(M;\mathbb{Z})\text{ with }\partial T=\Gamma\},$$
    and for any family of pairs $(\Phi_{\epsilon},A_{\epsilon})$ on $M$ with 
    \begin{equation}\label{near.min.fam}
    \iota_{\partial M}^*(\Phi_{\epsilon},A_{\epsilon})=(\Psi_{\epsilon},B_{\epsilon})\text{ and }E_{\epsilon}(\Phi_{\epsilon},A_{\epsilon})\leq \alpha_{\epsilon}(\Psi_\epsilon, B_\epsilon)+o(1),
    \end{equation}
    there is a mass-minimizing extension $T\in \mathcal{I}_{n-3}(M;\mathbb{Z})$ with $\partial T=\Gamma$ such that $Z(\Phi_{\epsilon},A_{\epsilon})\rightharpoonup^* 4\pi T$ along a subsequence.
\end{proposition}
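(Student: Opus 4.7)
The argument splits into an upper bound on $\alpha_\epsilon(\Psi_\epsilon,B_\epsilon)$ and a compactness step producing a mass-minimizing limit current. For the upper bound, for $\epsilon\in(\epsilon_{j+1},\epsilon_j]$ the restriction of $(\Phi^j_\epsilon,A^j_\epsilon)$ to $M$ is a valid competitor by definition of $(\Psi_\epsilon,B_\epsilon)$, and \eqref{ener.m} gives
\[
\alpha_\epsilon(\Psi_\epsilon,B_\epsilon)\leq 4\pi\inf\{\mathbb{M}(S)\mid S\in\mathcal{I}_{n-3}(M;\mathbb{Z}),\,\partial S=\Gamma\}+\frac{4\pi}{j},
\]
so $\limsup_{\epsilon\to 0}\alpha_\epsilon\leq 4\pi\inf\{\mathbb{M}(S)\mid \partial S=\Gamma\}$.

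For the compactness part, given any near-minimizing family $(\Phi_\epsilon,A_\epsilon)$ as in \eqref{near.min.fam}, I would first project to $|\Phi_\epsilon|\leq 1$ as in Section 3, then apply Lemma \ref{paste.lem} to glue it with the recovery pair $(\Phi^{j(\epsilon)}_\epsilon,A^{j(\epsilon)}_\epsilon)$ across $\partial M$, obtaining a Lipschitz pair $(\hat\Phi_\epsilon,\hat A_\epsilon)$ on the ambient manifold $N$ whose energy
\[
E_\epsilon(\hat\Phi_\epsilon,\hat A_\epsilon;N)=E_\epsilon(\Phi_\epsilon,A_\epsilon;M)+E_\epsilon(\Phi^{j(\epsilon)}_\epsilon,A^{j(\epsilon)}_\epsilon;V)
\]
is uniformly bounded, with the $V$-contribution converging to $4\pi\mathbb{M}(\hat\Gamma)$ by \eqref{m.ener.cond}. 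Applying Theorem \ref{thm: main theorem} on the interior of $N$ would then produce an integral $(n-3)$-current $\hat T$ with $Z(\hat\Phi_\epsilon,\hat A_\epsilon)\rightharpoonup^* 4\pi\hat T$. By \eqref{trv.cond} we have $\hat T\mres V=\hat\Gamma$, so $T:=\hat T\mres M$ is integral, and since $\hat T$ is a cycle in the interior of $N$ that equals $P^{-1}(\Gamma)$ off $M$, we deduce $\partial T=\Gamma$ in the relative sense. The mass bound from Theorem \ref{thm: main theorem} then reads
\[
4\pi(\mathbb{M}(T)+\mathbb{M}(\hat\Gamma))=4\pi\mathbb{M}(\hat T)\leq \liminf_{\epsilon\to 0}E_\epsilon(\Phi_\epsilon,A_\epsilon;M)+4\pi\mathbb{M}(\hat\Gamma),
\]
which gives $4\pi\mathbb{M}(T)\leq\liminf_\epsilon\alpha_\epsilon(\Psi_\epsilon,B_\epsilon)\leq 4\pi\inf\{\mathbb{M}(S)\}$ by the first step; combined with the upper bound this forces $\mathbb{M}(T)=\inf\{\mathbb{M}(S)\}$ and $\alpha_\epsilon\to 4\pi\inf\{\mathbb{M}(S)\}$.

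The main obstacle is verifying the smallness hypothesis $\int_N(1-|\hat\Phi_\epsilon|)^2/\epsilon\to 0$ required to invoke Theorem \ref{thm: main theorem}. On $V$ this is immediate from $\int_V(1-|\Phi^j_\epsilon|)^2\leq C\epsilon^2$ (cf.\ Remark \ref{onephi.est}). On $M$, however, combining the bound $\int_M|d|\Phi_\epsilon||^2\leq C\epsilon$ coming from $E_\epsilon(\Phi_\epsilon,A_\epsilon;M)\leq C$ with the boundary-trace estimate $\int_{\partial M}(1-|\Psi_\epsilon|)^2\leq C\epsilon^2$ via Poincar\'e only yields $\int_M(1-|\Phi_\epsilon|)^2/\epsilon\leq C$. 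To upgrade this to $o(1)$, I expect one needs to modify $\Phi_\epsilon$ by a Lipschitz map rounding $|\Phi_\epsilon|$ up to $1$ on the set where it is already close to $1$ (in the spirit of Remark \ref{onephi.est}) and use near-minimality as a competitor against $(\Phi^{j(\epsilon)}_\epsilon,A^{j(\epsilon)}_\epsilon)|_M$ to control the measure of the complementary "bad" set; only after this modification can Theorem \ref{thm: main theorem} be legitimately applied to the glued pair.
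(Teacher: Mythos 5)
Your overall architecture coincides with the paper's: the upper bound for $\alpha_\epsilon$ from the restricted recovery pairs via \eqref{ener.m}, the gluing of a near-minimizing family with the recovery extension on $V$ through Lemma \ref{paste.lem}, the application of Theorem \ref{thm: main theorem} on the doubled manifold $N$, the identification $T=\hat T-\hat\Gamma$ with $\partial T=\Gamma$, and the mass comparison closing the loop. You also correctly diagnose that the only serious issue is the hypothesis \eqref{1.fast} for the glued pair on $M$, i.e.\ $\int_M(1-|\Phi_\epsilon|)^2/\epsilon\to 0$, and that trace plus Poincar\'e only gives $O(1)$.

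However, at exactly this point your argument stops being a proof: the remedy you sketch (rounding $|\Phi_\epsilon|$ up to $1$ where it is already close to $1$, and ``using near-minimality as a competitor to control the bad set'') does not work as stated and is in any case only a guess. Since $E_\epsilon$ contains no potential term, near-minimality by itself places no direct penalty on $1-|\Phi_\epsilon|$; the only handle is $|d|\Phi_\epsilon||\le|d_{A_\epsilon}\Phi_\epsilon|$, which is precisely what already gave you the insufficient $O(1)$ bound. Moreover, any interior modification $\tilde\Phi_\epsilon$ must satisfy $\|\tilde\Phi_\epsilon-\Phi_\epsilon\|_{L^2}=o(\sqrt{\epsilon})$ in order not to change the limit of $Z$ (since $\|\beta(\tilde\Phi_\epsilon,A_\epsilon)-\beta(\Phi_\epsilon,A_\epsilon)\|_{L^1}\le\|\tilde\Phi_\epsilon-\Phi_\epsilon\|_{L^2}\|F_{A_\epsilon}\|_{L^2}$ and $\|F_{A_\epsilon}\|_{L^2}\sim\epsilon^{-1/2}$), which on the ``bad set'' is essentially the estimate you are trying to establish, so the modification route is circular. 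The paper's actual mechanism, which you are missing, exploits the variational structure in $\Phi$ for \emph{fixed} $A_\epsilon$: replace $\Phi_\epsilon$ by the minimizer $\Phi_\epsilon^{min}$ of $\Phi\mapsto\int_M|d_{A_\epsilon}\Phi|^2$ with boundary data $\Psi_\epsilon$. Then $d_{A_\epsilon}^*d_{A_\epsilon}\Phi_\epsilon^{min}=0$, hence $d^*d(1-|\Phi_\epsilon^{min}|^2)=2|d_{A_\epsilon}\Phi_\epsilon^{min}|^2$, and the rescaled function $f_\epsilon=(1-|\Phi_\epsilon^{min}|^2)/\sqrt{\epsilon}$ has bounded Dirichlet energy, $\|d^*df_\epsilon\|_{L^1}\le C\sqrt{\epsilon}$, and boundary values $O(\sqrt{\epsilon})$-close in the appropriate sense to those of the recovery extension (for which $\|(1-|\Phi'_\epsilon|^2)/\sqrt{\epsilon}\|_{L^2}\le C\sqrt{\epsilon}$); Rellich compactness then forces the strong $L^2$ limit of $f_\epsilon$ to be harmonic with zero boundary values, hence zero, which is \eqref{phinear1} for $\Phi_\epsilon^{min}$. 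Finally, near-minimality enters through the quadratic identity
\begin{equation*}
\frac1\epsilon\int_M|d_{A_\epsilon}(\Phi_\epsilon-\Phi_\epsilon^{min})|^2
=\frac1\epsilon\int_M|d_{A_\epsilon}\Phi_\epsilon|^2-\frac1\epsilon\int_M|d_{A_\epsilon}\Phi_\epsilon^{min}|^2
=E_\epsilon(\Phi_\epsilon,A_\epsilon)-E_\epsilon(\Phi_\epsilon^{min},A_\epsilon)\to 0,
\end{equation*}
which transfers \eqref{phinear1} from $\Phi_\epsilon^{min}$ to $\Phi_\epsilon$ since $\Phi_\epsilon-\Phi_\epsilon^{min}$ vanishes on $\partial M$. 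Without some argument of this type your application of Theorem \ref{thm: main theorem} to the glued pair is unjustified, and the proposition is not proved.
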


\begin{proof}
By construction, we note that the pairs $(\Psi_{\epsilon},B_{\epsilon})$ admit smooth extensions $(\Phi_{\epsilon}',A_{\epsilon}')=(\Phi_{\epsilon}^j,A_{\epsilon}^j)$ to $N=M\cup V$ satisfying
\begin{equation}\label{nglob}
    E_{\epsilon}(\Phi_{\epsilon}',A_{\epsilon}')+\int_N\frac{(1-|\Phi_{\epsilon}'|)^2}{\epsilon^2}\leq C,
\end{equation}
\begin{equation}\label{vlim}
    \lim_{\epsilon \to 0}\|Z(\Phi_{\epsilon}',A_{\epsilon}')\mres V-4\pi \hat{\Gamma}\|_{C^1(N)^*}=0
\end{equation}
and
\begin{equation*}
    \limsup_{\epsilon\to 0}E_{\epsilon}(\Phi_{\epsilon}',A_{\epsilon}'; M)\leq 4\pi\inf\{\mathbb{M}(S)\mid S\in \mathcal{I}_{n-3}(M),\text{ }\partial S=\Gamma\}.
\end{equation*}
Since $\alpha_{\epsilon}(\Psi_\epsilon, B_\epsilon)\leq E_{\epsilon}(\Phi_{\epsilon}',A_{\epsilon}';M)$ by definition, it follows immediately that
$$\limsup_{\epsilon\to 0}\alpha_{\epsilon}(\Psi_\epsilon, B_\epsilon)\leq 4\pi \inf\{\mathbb{M}(T)\mid T\in \mathcal{I}_{n-3}(M;\mathbb{Z})\text{ with }\partial T=\Gamma\}.$$

To complete the proof, it therefore suffices to show that any family $(\Phi_{\epsilon},A_{\epsilon})$ in $M$ satisfying \eqref{near.min.fam} has $Z(\Phi_{\epsilon},A_{\epsilon})\rightharpoonup^* 4\pi T$ subsequentially, where $T$ is an integral $(n-3)$-current in $M$ with $\partial T=\Gamma$; indeed, it then follows from the a priori mass bound
\begin{align*}
    4\pi\mathbb{M}(T)&\leq \liminf_{\epsilon \to 0}E_{\epsilon}(\Phi_{\epsilon},A_{\epsilon})\\
    &\leq \liminf_{\epsilon\to 0}\alpha_{\epsilon}(\Psi_\epsilon, B_\epsilon)\\
    &\leq 4\pi \inf\{\mathbb{M}(S)\mid S\in \mathcal{I}_{n-3}(M;\mathbb{Z})\text{ with }\partial S=\Gamma\}
\end{align*}
that $T$ is a mass-minimizing fill-in for $\Gamma$, of mass $\lim_{\epsilon\to 0}\alpha_{\epsilon}(\Psi_\epsilon, B_\epsilon)$.

So, let $(\Phi_{\epsilon},A_{\epsilon})$ be any family satisfying \eqref{near.min.fam}, and suppose additionally that
\begin{equation}\label{phinear1}
\lim_{\epsilon\to 0}\int_M\frac{(1-|\Phi_{\epsilon}|)^2}{\epsilon}=0.
\end{equation}
Applying Lemma \ref{paste.lem} to the pairs $(\Phi_{\epsilon}',A_{\epsilon}')$ on $V$ and $(\Phi_{\epsilon},A_{\epsilon})$ on $M$, it follows from \eqref{nglob} and \eqref{vlim} that $(\Phi_{\epsilon},A_{\epsilon})$ admits a Lipschitz extension $(\hat{\Phi}_{\epsilon},\hat{A}_{\epsilon})$ to the larger manifold $N$ satisfying the hypotheses of Theorem \ref{thm: main theorem} on $N$, as well as 
\begin{equation}\label{ext.z.lim}
    \lim_{\epsilon\to 0}\|Z(\hat{\Phi}_{\epsilon},\hat{A}_{\epsilon})\mres V-4\pi \hat{\Gamma}\|_{C^1(N)^*}=0.
\end{equation}
Applying Theorem \ref{thm: main theorem}, it then follows that along a subsequence $Z(\hat{\Phi}_{\epsilon},\hat{A}_{\epsilon})$ converges to an integral $(n-3)$-current $4\pi\hat{T}$ in $N$ with $\spt(\partial T)\subseteq \partial N$. On the other hand, writing
$$Z(\hat{\Phi}_{\epsilon},\hat{A}_{\epsilon})=Z(\Phi_{\epsilon},A_{\epsilon})\mres M+Z(\hat{\Phi}_{\epsilon},\hat{A}_{\epsilon})\mres V,$$
it then follows from \eqref{ext.z.lim} that
$$4\pi T:=\lim_{\epsilon\to 0}Z(\Phi_{\epsilon},A_{\epsilon})=4\pi (\hat{T}-\hat{\Gamma}).$$
In particular, we deduce that $T$ is globally an integral $(n-3)$-current on $M$ satisfying $\partial T=\Gamma,$ as desired.

To complete the proof, it remains to show that any family satisfying \eqref{near.min.fam} also satisfies \eqref{phinear1}. To see this, first consider the case where $\Phi_{\epsilon}$ minimizes $\Phi\mapsto \int_M|d_{A_{\epsilon}}\Phi|^2$ with respect to the boundary condition $\Phi|_{\partial M}=\Psi_{\epsilon}$. Note that, since $A_{\epsilon}$ is fixed and smooth, this minimization problem is well-posed, and yields a unique section $\Phi_{\epsilon}$ satisfying
$$d_{A_{\epsilon}}^*d_{A_{\epsilon}}\Phi_{\epsilon}=0\quad\text{in }M,$$
which has $|\Phi_\epsilon|\le1$ since capping $|\Phi_\epsilon|$
at $1$ can only decrease the energy.
As a consequence, it follows that 
\begin{equation} \label{pde feps}
d^*d (1-|\Phi_{\epsilon}|^2)=2|d_{A_{\epsilon}}\Phi_{\epsilon}|^2   
\end{equation}
and, setting
$$f_{\epsilon}:=\frac{1-|\Phi_{\epsilon}|^2}{\sqrt{\epsilon}},$$
we find that
$$\|df_{\epsilon}\|_{L^2(M)}^2\leq \frac{C}{\epsilon}\int_M|d_{A_{\epsilon}}\Phi_{\epsilon}|^2\leq C$$
and
\begin{equation}\label{lap.small}
\|d^*df_{\epsilon}\|_{L^1(M)}\leq \frac{C}{\sqrt{\epsilon}}\int_M|d_{A_{\epsilon}}\Phi_{\epsilon}|^2\leq C\sqrt{\epsilon}.
\end{equation}
Moreover, recalling \eqref{nglob} and writing 
$$\xi_{\epsilon}:=\frac{1-|\Phi_{\epsilon}'|^2}{\sqrt{\epsilon}},$$
we see that $\xi_{\epsilon} - f_{\epsilon}=0$ on $\partial M$ and $\int_M|d\xi_{\epsilon}|^2\leq CE_{\epsilon}(\Phi_{\epsilon}',A_{\epsilon}')\leq C$, while \eqref{nglob} gives
$$\|\xi_{\epsilon}\|_{L^2(M)}^2\leq C\epsilon.$$
Now, applying the Rellich compactness theorem to $f_{\epsilon}$ and $f_{\epsilon}-\xi_{\epsilon}$, we see that, along subsequences, $f_{\epsilon}$ has a strong $L^2$ limit
$$f=\lim_{\epsilon\to 0}f_{\epsilon},$$
which coincides with the $L^2$ limit of $f_{\epsilon}-\xi_{\epsilon}\in W_0^{1,2}(M)$ thanks to the estimate that we have for $\Vert \xi_\epsilon \Vert_{L^2(M)}$, so in particular $f=0$ on $\partial M$. On the other hand, it follows from \eqref{lap.small} that $d^*df=0$ in $M$, which together with the vanishing of $f$ on $\partial M$ forces $f\equiv 0$ on $M$. In other words, $\epsilon^{-1/2} (1-|\Phi_{\epsilon}|^2)$ converges strongly to $0$ in $L^2(M)$, which is the same as \eqref{phinear1} since $|\Phi_\epsilon|\le1$.

Now, for any family $(\Phi_{\epsilon},A_{\epsilon})$ satisfying \eqref{near.min.fam}, let $(\Phi_{\epsilon}^{min},A_{\epsilon})$ be the family obtained by replacing $\Phi_{\epsilon}$ with the minimizer for $\Phi\mapsto \int_M|d_{A_{\epsilon}}\Phi|^2$ with the same boundary data. Then we see that
$$\alpha_{\epsilon}(\Psi_\epsilon, B_\epsilon)\leq E_{\epsilon}(\Phi_{\epsilon}^{min},A_{\epsilon})\leq E_{\epsilon}(\Phi_{\epsilon},A_{\epsilon})\leq \alpha_{\epsilon}(\Psi_\epsilon, B_\epsilon) + o(1),$$
and since $d_{A_{\epsilon}}^*d_{A_{\epsilon}}\Phi_{\epsilon}^{min}=0$ while $\Phi_{\epsilon}-\Phi_{\epsilon}^{min}=0$ on $\partial M$, we deduce that
\begin{align*}
    \frac{1}{\epsilon}\int_M|d_{A_{\epsilon}}(\Phi_{\epsilon}-\Phi_{\epsilon}^{min})|^2
    &=\frac{1}{\epsilon}\int_M |d_{A_{\epsilon}}\Phi_{\epsilon}|^2-|d_{A_{\epsilon}}\Phi_{\epsilon}^{min}|^2\\
    &=E_{\epsilon}(\Phi_{\epsilon},A_{\epsilon})-E_{\epsilon}(\Phi_{\epsilon}^{min},A_{\epsilon})\\
    &\to 0
\end{align*}
as $\epsilon\to 0$. In particular, since $\Phi_{\epsilon}^{min}=\Phi_{\epsilon}$ on $\partial M$ and $|d|\Phi_{\epsilon}^{min}-\Phi_{\epsilon}||\leq |d_{A_{\epsilon}}(\Phi_{\epsilon}^{min}-\Phi_{\epsilon})|$, it follows that
$$\frac{|\Phi_{\epsilon}-\Phi_{\epsilon}^{min}|}{\sqrt{\epsilon}}\to 0$$
in $W^{1,2}(M)$ as $\epsilon \to 0$, and consequently
$$\lim_{\epsilon\to 0}\frac{1-|\Phi_{\epsilon}|}{\sqrt{\epsilon}}=\lim_{\epsilon\to 0}\frac{1-|\Phi_{\epsilon}^{min}|}{\sqrt{\epsilon}}=0$$
in $L^2(M)$. This confirms \eqref{phinear1} for any family satisfying \eqref{near.min.fam}, completing the proof.
\end{proof}

\printbibliography
\end{document}